\documentclass[oneside,11pt, a4paper,reqno]{amsart}
\usepackage[pdftex]{graphicx}
\graphicspath{{\main/pictures/}{pictures/}}
\usepackage{float}
\usepackage[T1]{fontenc}
\usepackage[utf8]{inputenc}
\usepackage{lmodern}
\usepackage[alphabetic,nobysame,abbrev]{amsrefs}
\usepackage[british]{babel}
\usepackage[babel, final]{microtype}
\usepackage{amssymb}
\usepackage{mathtools}

\usepackage[pdftex, dvipsnames]{xcolor}
\definecolor{darkblue}{rgb}{0,0,0.6}

\usepackage{caption}
\usepackage{subcaption}

\usepackage{ifpdf}
\usepackage{multirow}
\usepackage{adjustbox}
\usepackage{faktor} 
\usepackage[shortlabels]{enumitem}

\setcounter{tocdepth}{1}

\usepackage{pinlabel}
\usepackage[breaklinks,%
    pdftex,%
    final,%
    colorlinks=true,%
    linkcolor=NavyBlue,%
    citecolor=blue,
    filecolor=NavyBlue,%
    menucolor=NavyBlue,%
    urlcolor=NavyBlue,%
    bookmarks=true,%
    bookmarksdepth=2,%
    bookmarksnumbered=true,%
    bookmarksopen=true,%
    bookmarksopenlevel=2,%
    unicode
]{hyperref}

\usepackage{overpic}
\usepackage{tikz,tikz-cd}
\usetikzlibrary{tikzmark}
\tikzset{
    math to/.tip={Glyph[glyph math command=rightarrow]},
    loop/.tip={Glyph[glyph math command=looparrowleft, swap]},
    weird/.tip={Glyph[glyph math command=Rrightarrow, glyph length=1.5ex]},
    pi/.tip={Glyph[glyph math command=pi, glyph length=1.5ex, glyph axis=0pt]},
}

\usepackage{imakeidx}
\makeindex

\hyphenation{Thurs-ton}
\hyphenation{sur-ger-y}

\setlength{\marginparwidth}{2cm}
\usepackage{todonotes}
\usepackage{marginnote}


\makeatletter
\g@addto@macro\@floatboxreset\centering
\makeatother

\makeatletter
\def\l@figure{\@tocline{0}{3pt plus2pt}{0pt}{2.5pc}{}}
\makeatother

\allowdisplaybreaks[4]

\expandafter\let\csname ver@amsthm.sty\endcsname\relax
\let\theoremstyle\relax

\usepackage{amsthm}
\usepackage[capitalize, noabbrev]{cleveref}

\begingroup
    \makeatletter
    \@for\theoremstyle:=definition,remark,plain\do{%
        \expandafter\g@addto@macro\csname th@\theoremstyle\endcsname{%
            \addtolength\thm@preskip\parskip
            }%
        }
\endgroup

\newtheorem{theorem}{Theorem}[section]
\newtheorem{conjecture}[theorem]{Conjecture}
\newtheorem{corollary}[theorem]{Corollary}

\newtheorem{proposition}[theorem]{Proposition}
\newtheorem{lemma}[theorem]{Lemma}

\newtheorem*{rep@theorem}{\rep@title}
\newcommand{\newreptheorem}[2]{
\newenvironment{rep#1}[1]{
\def\rep@title{#2 \ref{##1}}
\begin{rep@theorem}}
{\end{rep@theorem}}}
\newreptheorem{theorem}{Theorem}
\newreptheorem{lemma}{Lemma}
\newreptheorem{proposition}{Proposition}
\newreptheorem{corollary}{Corollary}

\theoremstyle{definition}

\newtheorem{definition}[theorem]{Definition}
\newtheorem{construction}[theorem]{Construction}

\newtheorem{question}[theorem]{Question}

\newtheorem{exercise-easy}{\green{Exercise~$\triangle$}}
\newtheorem{exercise-medium}[exercise-easy]{\orange{Exercise~$\square$}}
\newtheorem{exercise-hard}[exercise-easy]{\red{Exercise~$\bigcirc$}}

\theoremstyle{remark}
\newtheorem{remark}[theorem]{Remark}

\numberwithin{equation}{section}


%

\makeatletter
\newcommand*\bigcdot{\mathpalette\bigcdot@{.7}}
\newcommand*\bigcdot@[2]{\mathbin{\vcenter{\hbox{\scalebox{#2}{$\m@th#1\bullet$}}}}}
\makeatother

\usepackage{subfiles}



\newcommand{\ol}{\overline}

\newcommand{\R}{\mathbb{R}}

\newcommand{\Z}{\mathbb{Z}}
\newcommand{\NN}{\mathbb{N}}


\DeclareMathOperator{\Int}{Int} 








\DeclareRobustCommand\sm{\mathbin{\mathpalette\smaux\relax}}
\newcommand\smaux[2]{\mspace{-4mu}
\raisebox{\rsmraise{#1}\depth}{\rotatebox[origin=c]{-25}{$#1\smallsetminus$}}
 \mspace{-4mu}
}
\newcommand\rsmraise[1]{%
  \ifx#1\displaystyle .5\else
    \ifx#1\textstyle .5\else
      \ifx#1\scriptstyle .3\else
        .45%
      \fi
    \fi
  \fi}





\DeclareMathOperator{\ks}{ks}   
\DeclareMathOperator{\Wh}{Wh}   
\DeclareMathOperator{\std}{std} 

\makeatletter
\newcommand{\colim@}[2]{%
  \vtop{\m@th\ialign{##\cr
    \hfil$#1\operator@font colim$\hfil\cr
    \noalign{\nointerlineskip\kern1.5\ex@}#2\cr
    \noalign{\nointerlineskip\kern-\ex@}\cr}}%
}
\newcommand{\colim}{%
  \mathop{\mathpalette\colim@{\rightarrowfill@\textstyle}}\nmlimits@
}
\makeatother

\usepackage{letltxmacro}
\LetLtxMacro\Oldfootnote\footnote




\usepackage{chngcntr}
\counterwithin*{step}{theorem} 

\usepackage{enumitem}
\usepackage[dvipsnames]{xcolor}

\newcommand{\red}{\textcolor{WildStrawberry}}
\newcommand{\green}{\textcolor{OliveGreen}}
\newcommand{\orange}{\textcolor{orange}}

\usepackage{amsmath, amsthm, amssymb,mathtools}
\usepackage[margin=25mm]{geometry}

\newcommand{\homeo}{\approx}
\newcommand{\diffeo}{\cong}

\newcommand{\cone}{\mathrm{cone}}

\renewcommand{\std}{\mathrm{std}}
\newcommand{\C}{\mathcal{C}}
\newcommand{\T}{\mathcal{T}}
\newcommand{\F}{\mathcal{F}}
\newcommand{\lk}{\ell k}
\renewcommand{\P}{\mathcal{P}}
\newcommand{\N}{\mathcal{N}}
\newcommand{\B}{\mathcal{B}}
\newcommand{\AC}{\mathcal{AC}}
\newcommand{\Arf}{\operatorname{Arf}}
\renewcommand{\top}{\mathrm{top}}
\newcommand{\diff}{\mathrm{diff}}

\begin{document}
\title[Slice knots and knot concordance]{Slice knots and knot concordance}

\author{Arunima Ray}
\address{Max-Planck-Institut f\"{u}r Mathematik, Vivatsgasse 7, 53111 Bonn, Germany}
\email{\href{mailto:aruray@mpim-bonn.mpg.de}{aruray@mpim-bonn.mpg.de}}
\urladdr{\href{http://people.mpim-bonn.mpg.de/aruray/}{http://people.mpim-bonn.mpg.de/aruray/}}

\begin{abstract}
These notes were prepared to accompany a sequence of three lectures at  the conference Winterbraids XI in Dijon, held in December 2021. In them, we provide an introduction to slice knots and the equivalence relation of concordance. We explain some connections between slice knots and exotic smooth structures on $\R^4$. We also introduce filtrations of the knot concordance groups and satellite operations. 
\end{abstract}
\maketitle

\section*{Overview}
Slice knots were first defined in 1958 by Fox and Milnor and have since become a flourishing field of study. While they were originally considered in the context of resolving singularities of surfaces in $4$-manifolds, numerous other connections to questions in $3$- and $4$-manifold topology have been discovered. A notable highlight: every knot which is topologically slice but not smoothly slice gives rise to an exotic smooth structure on $\R^4$; more on this in \cref{sec:ex-sm-str}. 

The goal of these lecture notes is to provide an overview of the basic notions in the study of slice knots and knot concordance, leading up to a small selection of recent developments. Due to the limitations of time we will barely scratch the surface of this active and vibrant area. Nevertheless we will attempt to provide some references and pointers to other resources.

\subsection*{Exercises}
There are exercises throughout the lecture notes, which are also compiled in a list at the end. The problems are separated into three levels. \textbf{\green{Green $\triangle$}} exercises are usually straightforward and should be attempted if you are seeing this material for the first time. Prerequisites are courses in introductory geometric and algebraic topology. \textbf{\orange{Orange $\square$}} exercises are for readers who are already comfortable with some of the terminology; they may require nontrivial input from outside these lectures, which we have tried to indicate as hints. Finally, \textbf{\red{red $\bigcirc$}} exercises are challenge problems. Open problems will be marked as such, and do not intersect with the exercises.

\subsection*{Relationship to lectures} Some details and exercises in these notes were not mentioned in the accompanying lectures. The order of topics has also been slightly modified. Many of the exercises are new.

\subsection*{Conventions}  Homeomorphism of manifolds is denoted by the symbol $\homeo$, while diffeomorphism is denoted by $\diffeo$. All manifolds are assumed to be oriented. All knots are assumed to be tame and oriented, i.e.~(images of) smooth embeddings $S^1\hookrightarrow S^3$, where both the domain and codomain are oriented by hypothesis. The set of nonnegative integers is denoted by $\NN$.

\subsection*{Acknowledgements}
I am grateful to the organisers of Winterbraids XI in Dijon for an excellent conference, especially considering the Covid-19 pandemic, and to the attendees for their lively participation and many questions. Many thanks are due as well to the anonymous referee for their helpful comments.

\section{Slice and ribbon knots}\label{sec:defns}
Recall that a knot $K\colon S^1\hookrightarrow S^3$ is \emph{trivial} or the \emph{unknot} if and only if it bounds an embedded disc in $S^3$. We will often conflate a knot and its image, and it will be important that both $S^1$ and $S^3$ are oriented. See~\cref{fig:exknots} for some examples of knots. Here, by definition, a knot is trivial if it can be deformed via an ambient isotopy to the round unit circle $S^1\subseteq \R^3\subseteq S^3$. For more on classical knot theory, see e.g.~\cites{fox-quicktrip,rolfsen-book,livingston-book}. 

\begin{figure}[htb]
\centering
\includegraphics[width=15cm]{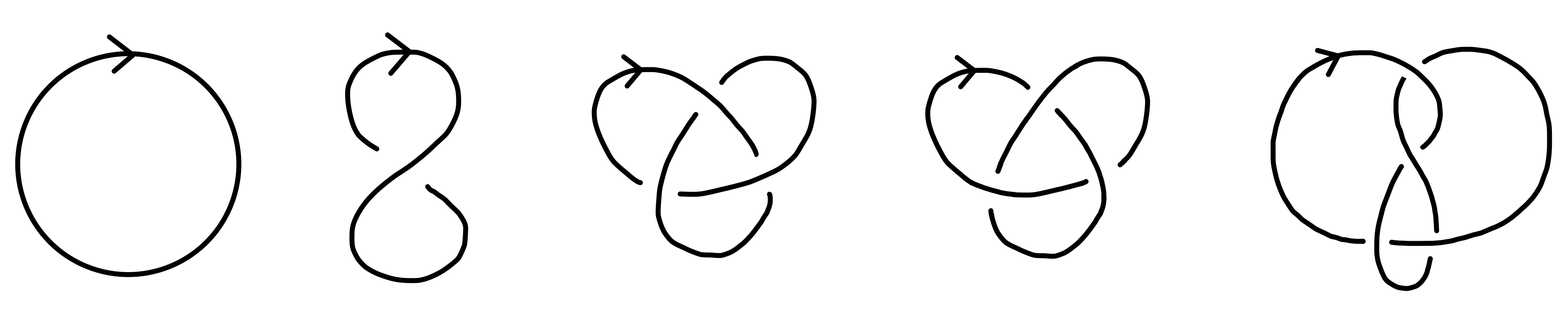}
\caption{The two knots on the left are trivial. The three knots on the right are not trivial, and are respectively called the \emph{right-handed trefoil}, the \emph{left-handed trefoil}, and the \emph{figure eight} knot. 
}\label{fig:exknots}
\end{figure} 

Slice knots are, in some sense, generalised trivial knots -- we still ask for the knot to bound an embedded disc, but it has the freedom of an additional (half-)dimension.

\begin{definition}
A knot $K\subseteq S^3=\partial B^4$ is \emph{smoothly slice} if it bounds a smoothly embedded disc, called a \emph{smooth slice disc}, in $B^4$.\footnote{In 1958, Fox and Milnor called these \emph{null-equivalent} knots, in unpublished work. Fox adopted the term \emph{slice knot}, proposed by Edward Moise, in~\cite{fox-quicktrip}.}
\end{definition}

We begin by discussing a number of elementary ways to construct new slice knots from a given slice knot. Let $K\colon S^1\hookrightarrow S^3$ be a knot. The knot obtained by reversing the orientation of $S^1$ is called the \emph{reverse} of $K$, denoted by $rK$. The knot obtained by reversing the orientation of $S^3$ is called the \emph{mirror image} of $K$, denoted by $\ol{K}$. For example, the left-handed trefoil is the mirror image of the right-handed trefoil. Both trefoils are isotopic to their reverses. The figure eight is isotopic to both its reverse and its mirror image. 

\begin{proposition}[\green{Exercise~$\triangle$}~\ref{ex:reverse-mirror-slice}]\label{prop:reverse-mirror-slice}
Let $K\subseteq S^3$ be a knot. Show that $K$ is smoothly slice if and only if $rK$ is smoothly slice if and only if $\ol{K}$ is smoothly slice.
\end{proposition}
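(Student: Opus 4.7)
The plan is to produce explicit slice discs for $rK$ and $\overline{K}$ from a given slice disc for $K$, and to observe that each construction is its own inverse, yielding both directions simultaneously.

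First I would fix notation: suppose $D \subset B^4$ is a smooth slice disc for $K$, oriented so that $\partial D = K$ as oriented knots. For $rK$, the strategy is simply to reuse the same smoothly embedded disc $D$ but equipped with the opposite orientation. Since $\partial$ is compatible with orientation reversal, the resulting oriented boundary is $rK$, and smoothness is preserved since the underlying subset is unchanged. This argument is a tautological involution, so it settles the equivalence $K$ smoothly slice $\iff rK$ smoothly slice in both directions at once.

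For $\overline{K}$, the idea is to transport $D$ through an orientation-reversing self-diffeomorphism of $B^4$. Concretely, I would pick a standard orientation-reversing isometry $\varphi\colon \R^4 \to \R^4$, for instance reflection in a hyperplane $(x_1,x_2,x_3,x_4)\mapsto (-x_1,x_2,x_3,x_4)$, which restricts to a diffeomorphism $B^4 \to B^4$ and to an orientation-reversing self-diffeomorphism of $S^3 = \partial B^4$ on the boundary. By the conventions in the paper, $\varphi|_{S^3}(K)$ represents $\overline{K}$ up to orientation-preserving ambient isotopy of $S^3$ (after possibly further isotoping within $S^3$, which extends to $B^4$ by a collar argument). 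Then $\varphi(D)\subset B^4$ is a smoothly embedded disc whose oriented boundary is $\overline{K}$, so $\overline{K}$ is smoothly slice. Again, because $\varphi$ is an involution, reversing the roles of $K$ and $\overline{K}$ proves the converse.

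The argument is essentially bookkeeping rather than substantive mathematics. The only mild subtlety, which I would mention but not dwell on, is orientation conventions: one must check that combining orientation reversal of $B^4$ with the induced orientation on $\partial D$ genuinely yields $\overline{K}$ and not $r\overline{K}$. But the previous bullet handles reversal of $S^1$ freely, so even if this convention slips by a sign the conclusion is not affected: the three properties (slice, $r$-slice, $\overline{\ \ }$-slice) are equivalent up to composing the two involutions.
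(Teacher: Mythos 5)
Your proof is correct, and it is the natural argument the paper's exercise is asking for: reuse the disc with reversed orientation for $rK$, and push the disc through an orientation-reversing diffeomorphism of $(B^4,S^3)$ for $\ol{K}$, noting that both constructions are involutions. The orientation bookkeeping you flag at the end is handled exactly as you say, since the $r$-equivalence absorbs any ambiguity between $\ol{K}$ and $r\ol{K}$.
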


Given knots $K,J\subseteq S^3$, we can form their \emph{connected sum} by taking the (oriented!) connected sum of pairs $(S^3,K)\# (S^3,J)$; see~\cref{fig:conn-sum}. Given diagrams of knots $K$ and $J$ we can form a diagram of the connected sum by connecting the two diagrams by a trivial band, as shown in the figure, taking care to match the original orientations. This is a special case of a \emph{band sum} defined later in this section.

\begin{figure}[htb]
\centering
\includegraphics[width=6cm]{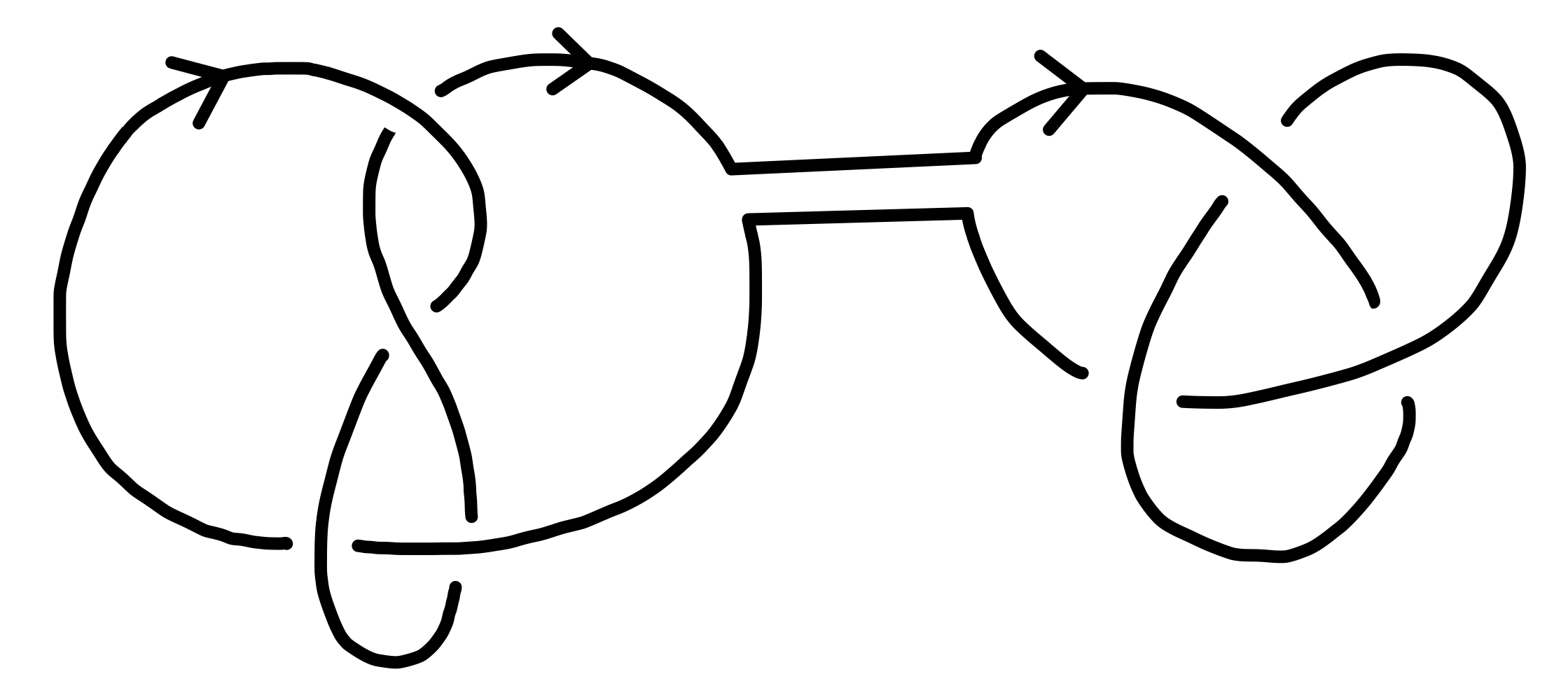}
\caption{The connected sum of the figure eight and the right-handed trefoil from \cref{fig:exknots}.}\label{fig:conn-sum}
\end{figure} 

\begin{proposition}[\green{Exercise~$\triangle$}~\ref{ex:conn-sum-slice}]\label{prop:conn-sum-slice}
If the knots $K,J\subseteq S^3$ are smoothly slice, then so is $K\# J$. 
\end{proposition}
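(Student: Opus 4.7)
The plan is to construct a slice disc for $K \connsum J$ by taking the boundary connected sum of pairs $(B^4, D_K) \bdysum (B^4, D_J)$, where $D_K, D_J$ are the given smooth slice discs. The governing observation is that the boundary of such a pair sum is, by construction, $(S^3 \connsum S^3, K \connsum J) \diffeo (S^3, K \connsum J)$, while the underlying $4$-manifold $B^4 \bdysum B^4$ is diffeomorphic to $B^4$, and the interior surface is again a disc.

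Concretely, first pick points $p \in K \subseteq \bdy B^4$ and $q \in J \subseteq \bdy B^4$. By the tubular neighbourhood theorem applied to the proper smooth embeddings $D_K \hookrightarrow B^4$ and $D_J \hookrightarrow B^4$, one can find small closed $4$-ball neighbourhoods $N_p \ni p$ and $N_q \ni q$ in $B^4$ such that each of the pairs $(N_p, N_p \cap D_K)$ and $(N_q, N_q \cap D_J)$ is diffeomorphic to a standard model $(B^4, B^2_+)$, where $B^2_+ \subseteq B^4$ is a properly embedded half-disc meeting $\bdy B^4$ in an unknotted diameter arc. Then glue the complementary pairs $(B^4 \sm \Int N_p, D_K \sm \Int N_p)$ and $(B^4 \sm \Int N_q, D_J \sm \Int N_q)$ along their new $3$-ball boundary components via an orientation-reversing diffeomorphism that matches the arcs $D_K \cap \bdy N_p$ and $D_J \cap \bdy N_q$ compatibly with the orientations of $K$ and $J$.

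Finally, to identify what has been built: the boundary connected sum of two smooth discs along a properly embedded boundary arc is again a smooth disc, so the surface $D$ produced is a smoothly and properly embedded disc in $B^4$; by the definition of the connected sum of knots, $\bdy D = K \connsum J$ inside $\bdy B^4 \diffeo S^3$. The main, albeit mild, obstacle is establishing the standard form of the neighbourhood pairs in the previous paragraph, so that the gluing indeed yields a smooth disc rather than something with corners or a singular point; this follows routinely from the tubular neighbourhood theorem for proper smooth embeddings of manifolds with boundary, together with a collaring argument along $\bdy B^4$.
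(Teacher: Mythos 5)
The paper leaves this as \green{Exercise~$\triangle$}~\ref{ex:conn-sum-slice} with no proof given, so there is nothing to compare against; your argument is the standard one. Forming $(B^4, D_K) \bdysum (B^4, D_J)$ by excising half-ball neighbourhood pairs of points on $K$ and $J$ and gluing along the resulting $3$-balls, matching the disc arcs with orientations compatible with those of $K$ and $J$, does produce $(B^4, B^4 \bdysum B^4) \diffeo B^4$ with a properly embedded disc bounded by $K \connsum J$, and the appeal to the local normal form for a properly embedded surface near a boundary point (plus corner smoothing) is exactly what is needed to make the gluing smooth. This is correct.
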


By ambient Morse theory, we can isotope any smooth slice disc so that the level sets with respect to the radial function on $B^4$ are links, i.e.~(the image of) a smooth embedding of a disjoint union of circles in $S^3$, except at finitely many radii where the level sets have singularities. Since we are considering a disc in a $4$-manifold, there are only three possible types of singularities: minima, saddles, and maxima. See~\cref{fig:slice-schematic} for a picture and \citelist{\cite{milnor-hcobtheorem}\cite{milnor-morsetheory}\cite{matsumoto-morsetheory}\cite{nicolaescu-morsetheory}\cite{gompf-stipsicz-book}*{Sections 4.2 and 6.2}} for more on Morse theory.

\begin{figure}[htb]
	\centering
\begin{tikzpicture}
        \node[anchor=south west,inner sep=0] at (0,0){	\includegraphics[width=12cm]{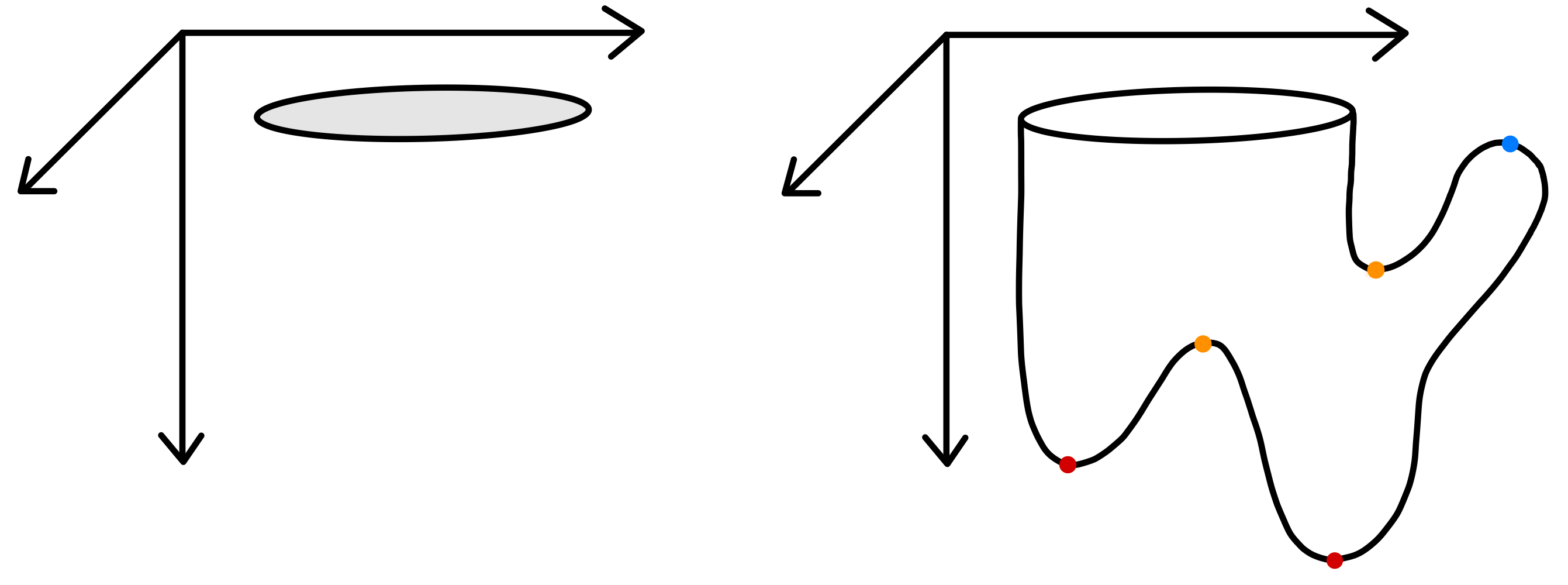}};
		\node at (0.05,2.85) {$x$};
		\node at (5.3,4.2) {$y,z$};
		\node at (1.4,0.7) {$w$};
		\node at (5.85,2.85) {$x$};
		\node at (11.15,4.2) {$y,z$};
		\node at (7.25,0.7) {$w$};
	\end{tikzpicture}
\caption{The axes schematically indicate $B^4$, where the coordinates $x,y,z$ describe $\R^3\subseteq S^3$ and the coordinate $w\geq 0$ indicates depth, so that $S^3$ occurs at $w=0$. In both cases we have a knot in $\R^3\subseteq S^3=\partial B^4$. On the left we have a trivial knot, as detected by the embedded disc in $S^3$ shaded grey. On the right we have a smoothly slice knot, with a smooth slice disc lying in $B^4$. The disc is in a particularly nice position with respect to the radial function on $B^4$. At most $w$-slices, the disc appears as a link, i.e.\ an embedding of a disjoint union of circles. At finitely many values of $w$, we see singularities. There are three types of singularities: minima (red), saddles (orange), and maxima (blue). See~\cref{fig:ribbon-movie} for a more precise diagram.}\label{fig:slice-schematic}
\end{figure} 

Smooth slice discs, once they have been perturbed to respect the radial Morse function as above, can be described in terms of movies. More specifically, we consider the radial level sets as the radius increases from $0$ to $1$, with the latter corresponding to $\partial B^4=S^3$ containing a smoothly slice knot. At each minimum, an unknot is born, split from every other component, if any. At every saddle singularity, two portions of a link either merge together, or split apart. At a maximum, an unknotted component, split from everything else, disappears. See~\cref{fig:ribbon-movie} for an example.  

\begin{figure}[htb]
\centering
\includegraphics[width=15cm]{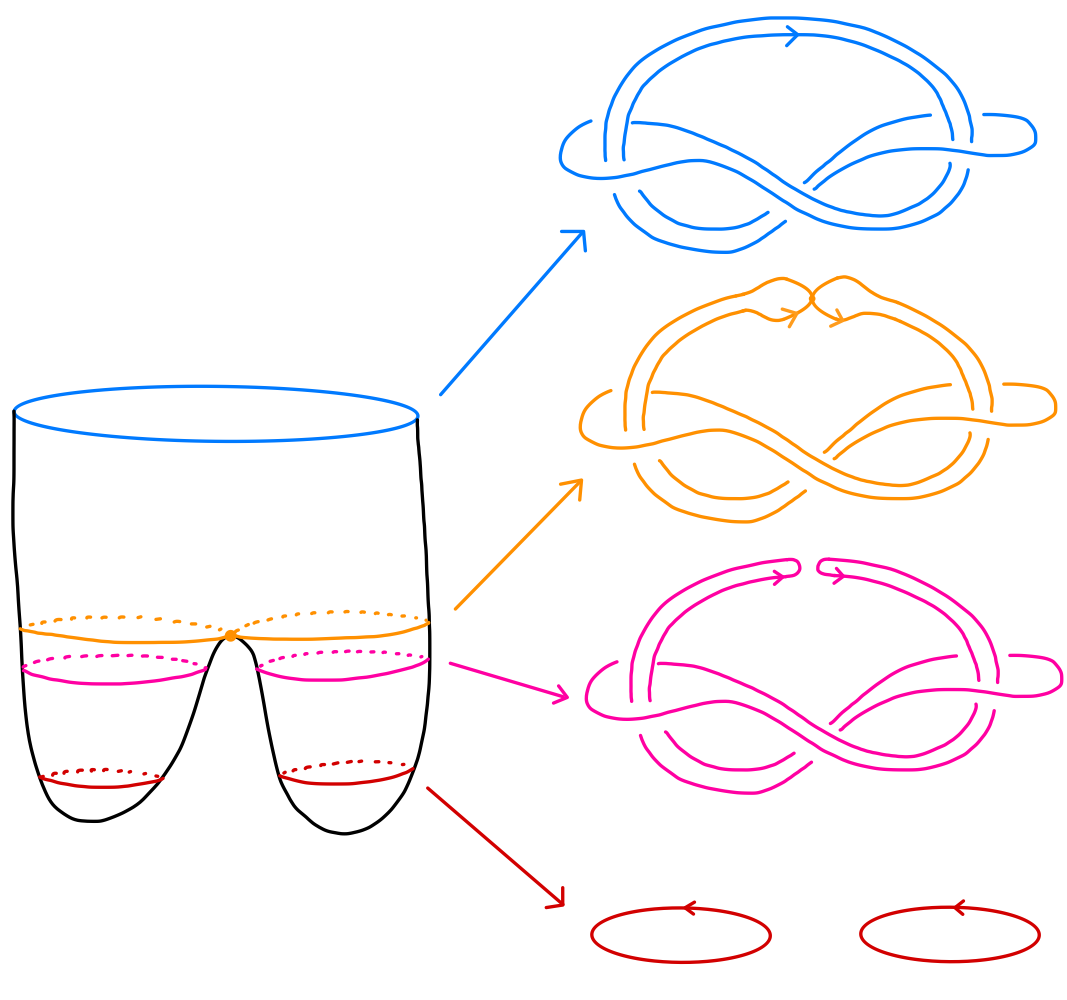}
\caption{Describing a ribbon disc as a movie. At the very bottom, we have two minima. At a slightly larger radius in $B^4$, the minima give rise to (split) unknots, i.e. an unlink. The subsequent cross sections, until the orange saddle singularity, describe an isotopy of this unlink. At the saddle singularity, two portions of the previous unlink touch each other, and at a slightly higher radius we see a ribbon knot.}\label{fig:ribbon-movie}
\end{figure} 

We especially like smooth slice discs with only two types of singularities, as in the following definition. 

\begin{definition}\label{def:ribbon}
A knot $K\subseteq S^3=\partial B^4$ is ribbon if it bounds a smoothly embedded disc in $B^4$ with only minima and saddles. Such a disc is called a \emph{ribbon} disc.
\end{definition}

\begin{figure}[htb]
	\centering
\begin{tikzpicture}
        \node[anchor=south west,inner sep=0] at (0,0){	\includegraphics[width=7cm]{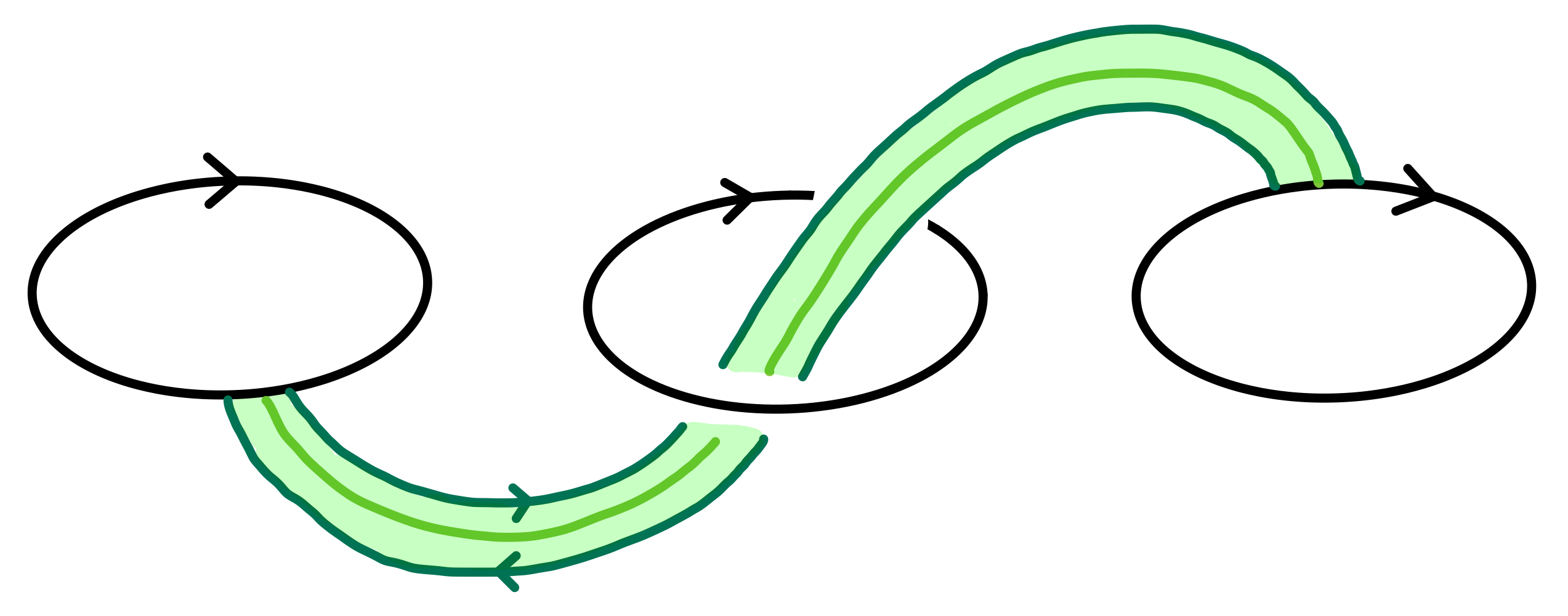}};
        \node[anchor=south west,inner sep=0] at (8,0){	\includegraphics[width=7cm]{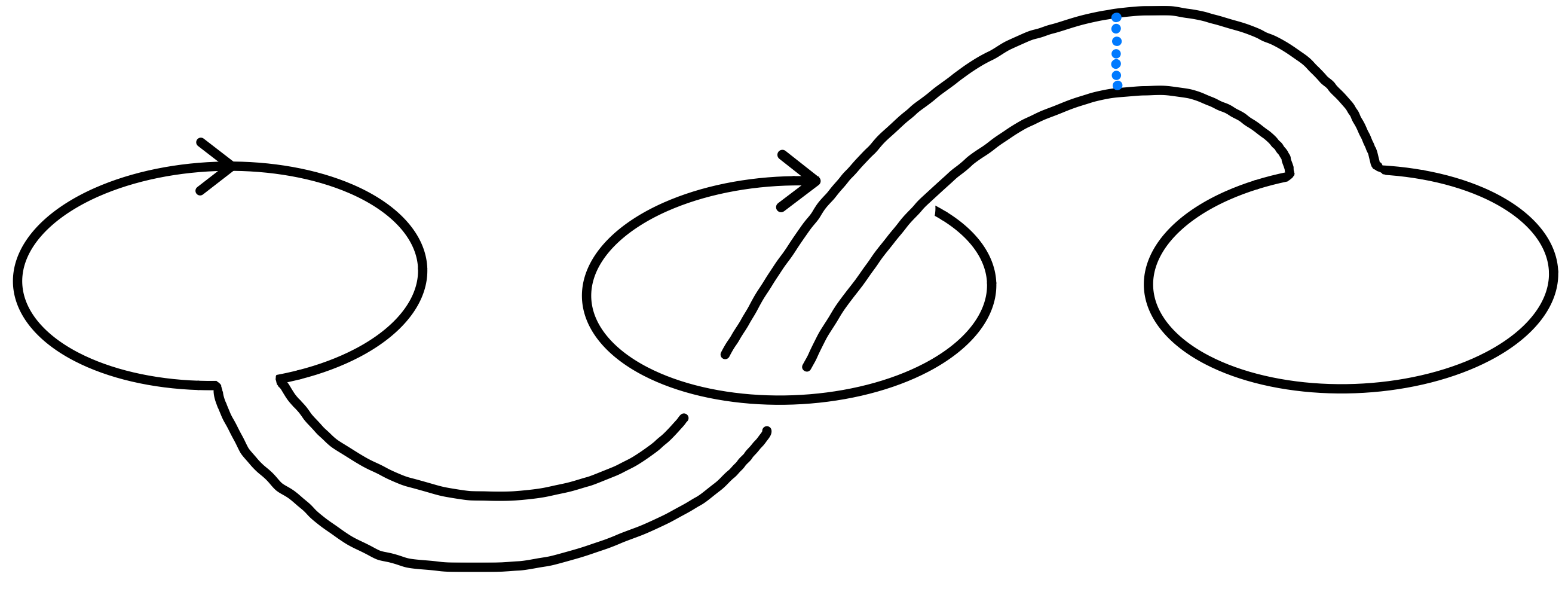}};
		\node at (-0.15,1.2) {$L$};
		\node at (2.35, 0.76) {$B$};
		\node at (3.5,-0.5) {(a)};
		\node at (11.5,-0.5) {(b)};
	\end{tikzpicture}
\caption{(a) A link $L$ (black) and a band $B$ (dark green). The core of $B$ is shown in light green. (b) The result of the band sum on $L$ along $B$ is shown in black. The arc guiding the dual band move reversing the previous band move is shown in blue. }\label{fig:band-sum}
\end{figure}

Given a link $L\subseteq S^3$, a band $B$ is a copy of $[0,1]\times[0,1]$ embedded in $S^3$, with $\{0,1\}\times [0,1]$ lying on $L$ (matching the orientations), and otherwise disjoint from $L$. The arc $[0,1]\times \{1/2\}$ is called the \emph{core} of $B$. A \emph{band sum} of $L$ along the band $B$ is the result of removing $\{0,1\}\times [0,1]$ from $L$ and gluing in $[0,1]\times \{0,1\}$; see \cref{fig:band-sum}. When the components of $\{0,1\}\times [0,1]$ lie on distinct components of $L$ the result is also called a \emph{fusion} of $L$ along $B$. Note that the core $[0,1]\times \{1/2\}$ of a band can be an interesting arc in $S^3\sm L$, and in particular can link with the components of $L$. A band sum on $L$ along $B$ can be reversed by performing a dual band move, and corresponds to removing $[0,1]\times \{0,1\}$ from the band sum and gluing $\{0,1\}\times [0,1]$ back in. One notices that the result is isotopic to the original link $L$. 

From the movie perspective, reading the ribbon disc from the bottom up, we see that any ribbon knot is a fusion of some trivial link. (Why is it a fusion, rather than an arbitrary band sum?) In other words, it can be obtained from an $n$-component trivial link for some $n$ (corresponding to minima) and a collection of $n-1$ bands (corresponding to saddles), by performing band sums, each of which joins two distinct components of the original link, and no two of which join the same pair of components. Alternatively, we could run the movie backwards. From this perspective, a ribbon knot is a knot where one can add $n-1$ bands for some $n$ (corresponding to the dual band moves described above), so that the result is an $n$-component unlink. 

The following shows that ribbon knots can be described purely $3$-dimensionally. This gives a useful method to detect if a given knot is ribbon.

\begin{proposition}[\orange{Exercise~$\square$}~\ref{ex:ribbon-sing}]\label{prop:ribbon-sing}
A knot $K\subseteq S^3$ is ribbon if and only if it bounds a disc in $S^3$ with only \emph{ribbon singularities}, i.e.\ singularities of the form shown in \cref{fig:ribbon-sing}.\footnote{Such singular discs in $S^3$ are also sometimes called ribbon discs.}
\end{proposition}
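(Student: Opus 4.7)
My plan is to prove both implications using the movie description of a ribbon disc, which decomposes it into disks (at the minima) and bands (at the saddles), together with local pushing and resolution arguments.

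For the $(\Rightarrow)$ direction, suppose $K$ bounds a ribbon disc $D \subseteq B^4$. By the discussion preceding the proposition, $K$ is a fusion of an $n$-component unlink $U = U_1 \sqcup \cdots \sqcup U_n$ along $n-1$ bands. I realise $U$ in $S^3$ as the boundary of pairwise disjoint embedded disks $\widehat D_1, \ldots, \widehat D_n$ and the fusion bands as embedded bands $B_1, \ldots, B_{n-1}$ in $S^3$. By a general-position argument for $1$-dimensional arcs in $S^3$, the band cores can be made pairwise disjoint after an isotopy relative to their endpoints on $U$, and sufficiently thin tubular neighborhoods then give pairwise disjoint bands. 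The union $\Delta := \bigcup_i \widehat D_i \cup \bigcup_j B_j$ is an immersed disc in $S^3$ with $\partial \Delta = K$, whose only singularities are transverse arcs where some band crosses some disk. Each such intersection is a ribbon singularity: the preimage in $\widehat D_i$ is an interior arc of the source disc, while the preimage in $B_j$ is a proper arc running from one long side of $B_j$ to the other, and after fusion these long sides are identified with subarcs of $K = \partial D$.

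For the $(\Leftarrow)$ direction, suppose $K$ bounds an immersed disc $\Delta \subseteq S^3$ with $k$ ribbon singularities. I construct a ribbon disc in $B^4$ by embedding the source disc $D^2$ suitably. Place the ``body'' of $D^2$ as a bowl-shaped embedding in $B^4$ with a single minimum at some depth $\varepsilon > 0$, and at each ribbon singularity, push a small rectangular neighborhood of the interior preimage arc $\alpha_1$ further into $B^4$, creating a bump that separates the two previously crossing sheets. Each bump contributes one minimum (at its tip) and one saddle (at the neck where it reconnects to the body), and no maxima arise since all pushes are strictly inward and $K$ sits at depth zero on $\partial B^4$. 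The resulting disc is smoothly embedded in $B^4$ with $k+1$ minima and $k$ saddles (Euler characteristic $1$), hence is a ribbon disc for $K$.

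The main obstacle is the Morse-theoretic bookkeeping in the $(\Leftarrow)$ direction: verifying that the bump construction yields exactly one nondegenerate minimum and one nondegenerate saddle per ribbon singularity, with no extraneous critical points. This reduces to a local calculation in standard Morse coordinates near each bump, with attention to placing saddles at distinct radial levels to avoid coincidences. In the $(\Rightarrow)$ direction, the corresponding subtlety is ensuring that after isotopy each band crosses each disk from long side to long side (the generic case), which is what guarantees that the intersection arcs are of ribbon type rather than some other configuration.
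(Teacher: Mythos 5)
Your proof is correct and follows the approach that the paper itself sets up: for the forward direction you use the movie description of a ribbon disc as a fusion of an unlink along bands and observe that the band--disc intersections are precisely ribbon singularities, and for the reverse direction you implement (and sharpen) the paper's hint of pushing a neighbourhood of each interior preimage arc into $B^4$, adding the bowl and the Morse-index bookkeeping needed to verify that the resulting embedded disc has no maxima. The obstacles you flag are indeed the right ones to worry about, and they resolve as you indicate; one further small point worth recording in a full write-up is that the bands should be isotoped to leave $\widehat D_i$ transversely near their attaching arcs, so that no spurious intersections arise next to $\partial \widehat D_i$, and that replacing the movie's bands by isotopic ones in a single copy of $S^3$ leaves the fusion knot type unchanged.
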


\begin{figure}[htb]
\centering
\includegraphics[width=15cm]{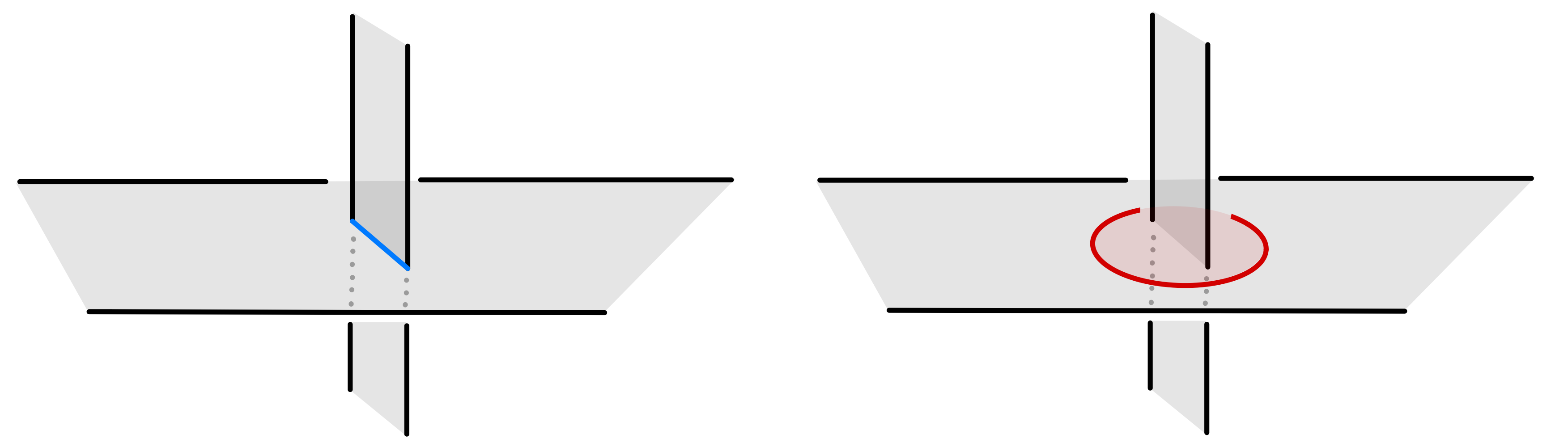}
\caption{Left: A \emph{ribbon singularity} (shown in blue) of a disc (shown in grey) in $S^3$. Right: The smaller disc region encircled in red on the horizontal sheet could be pushed radially into $B^4$ to resolve the singularity.}\label{fig:ribbon-sing}
\end{figure} 

\begin{proposition}[\green{Exercise~$\triangle$}~\ref{ex:easy-ribbon}]\label{prop:easy-ribbon}
For any knot $K\subseteq S^3$, the knot $K\# r\ol{K}$ is ribbon.
\end{proposition}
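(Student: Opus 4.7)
My plan is to construct an explicit ribbon disc for $K \# r\ol{K}$ in $B^4$ via a product construction. First I would put $K$ in ``tangle form'' by choosing a 3-ball $B \subseteq S^3$ such that $K \cap B$ is a properly embedded arc $\alpha$ with endpoints $p_1, p_2 \in \partial B$, and $K \cap (S^3 \setminus \Int B)$ is an unknotted arc in the complementary 3-ball. Every knot admits such a decomposition, for example by taking $B$ large enough to contain all of $K$ except a small unknotted arc near a chosen point.

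Next I would identify $B^4 \approx B \times [-1,1]$ (smoothing corners) so that $\partial B^4$ is the union of the two hemispheres $B \times \{\pm 1\}$ with the equatorial cylinder $\partial B \times [-1, 1]$. Consider the properly embedded product disc $D := \alpha \times [-1,1] \subseteq B^4$. Its boundary consists of the horizontal arcs $\alpha \times \{\pm 1\}$ together with the vertical arcs $\{p_1, p_2\} \times [-1, 1]$. A careful orientation check is the key step: the top hemisphere $B \times \{+1\}$ inherits $B$'s orientation as a subset of $\partial B^4$, while $B \times \{-1\}$ inherits the reverse orientation, and simultaneously the boundary orientation on $D$ reverses the orientation of $\alpha$ on the bottom edge. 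These two sign flips combine so that $\alpha \times \{-1\}$ represents the $r\ol{K}$-tangle in the bottom hemisphere; consequently, with the vertical arcs serving as the long edges of a trivial connecting band, $\partial D$ is precisely $K \# r\ol{K}$.

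Finally I would verify that $D$ is a ribbon disc by analysing the radial Morse function $\rho(b, t) = \sqrt{|b|^2 + t^2}$ on $B^4$ (with $B$ centred at the origin). The critical points of $\rho|_D$ all lie on the equatorial slice $\{t = 0\}$, at parameter values $s_0$ where $|\alpha(s)|$ has a critical point. The Hessian of $\rho|_D$ there is diagonal, with one eigenvalue equal to $1/|\alpha(s_0)|$, which is strictly positive, and the other determined by the local behaviour of $|\alpha|$ at $s_0$. Therefore every critical point is either a local minimum (when $s_0$ is a local minimum of $|\alpha|$) or a saddle (when $s_0$ is a local maximum), and maxima never occur---this is precisely the ribbon condition.

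The main obstacle is the orientation bookkeeping in the middle paragraph: one must verify carefully that $\partial D$ is $K \# r\ol{K}$ rather than some neighbouring variant such as $K \# K$, $K \# \ol{K}$, or $K \# rK$. A minor additional technicality is perturbing $\alpha$ slightly so that $|\alpha|$ is itself Morse, which can always be arranged without affecting the knot type of $K$.
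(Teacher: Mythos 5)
Your argument is correct, and the product-disc construction $D=\alpha\times[-1,1]\subseteq B\times[-1,1]\cong B^4$ is exactly the standard proof of this fact (the paper leaves it as \green{Exercise~$\triangle$}~\ref{ex:easy-ribbon} without supplying one). The orientation bookkeeping you flag as the key step is indeed the point to get right, and your analysis of it is sound: the bottom hemisphere reverses the ambient orientation (giving a mirror) while the boundary orientation of $D$ reverses the direction of the bottom arc (giving a reverse), so the bottom tangle closes up to $r\ol K$ and $\partial D=K\# r\ol K$ after connecting through the equatorial region. The Hessian computation showing one eigenvalue is always $1/|\alpha(s_0)|>0$, hence no interior maxima, is the heart of the matter and is correct. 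One small technical point worth tightening: $\rho(b,t)=\sqrt{|b|^2+t^2}$ is not constant on $\partial(B\times[-1,1])$, so it is not literally the radial Morse function after smoothing corners to a round $B^4$. The easiest fix is to restrict to the round sub-ball $\{\rho\le 1-\epsilon\}$, on which $\rho$ genuinely is the radial function: for small $\epsilon$ this ball contains all the interior critical points you found, the intersection $D\cap\{\rho\ge 1-\epsilon\}$ is an annular collar, and so $\partial D_\epsilon$ is isotopic to $K\# r\ol K$ while $D_\epsilon$ is an honest ribbon disc for it. (You should also perturb $\alpha$ so that $|\alpha|$ is Morse and $\alpha$ avoids the origin, as you note.) An alternative, more picture-driven route is to exhibit a disc in $S^3$ with only ribbon singularities, per \cref{prop:ribbon-sing}: draw $K\# r\ol K$ symmetrically about a $2$-sphere $S$ and fold the half-diagram across $S$; the trace of the fold is an immersed disc whose self-intersections are visibly ribbon. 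Your $4$-dimensional version is cleaner, though, and I recommend keeping it.
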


From \cref{def:ribbon}, one sees directly that any ribbon knot is smoothly slice. (For the equivalent definition given in \cref{prop:ribbon-sing}, one could push a small region of the ribbon disc in $S^3$ into $B^4$ to produce an embedded disc, as indicated in~\cref{fig:ribbon-sing}.) The converse is an important open problem.

\begin{conjecture}[Open, slice-ribbon conjecture, Fox~\cite{fox-problems}*{Problem~25}]
Every smoothly slice knot is ribbon.
\end{conjecture}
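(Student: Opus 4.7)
This is an open problem, first posed by Fox in 1962, so I cannot offer a real proof; what follows is the natural line of attack and a description of exactly why it stalls.

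The plan is to start from a smooth slice disc $D \subset B^4$ which, after a generic perturbation, is in standard position with respect to the radial Morse function. Let $m$, $s$, and $M$ denote the number of minima, saddles, and maxima. Since $\chi(D) = 1$, we have $m - s + M = 1$, and ribbon discs are characterized by $M = 0$. The entire task is therefore to cancel every maximum against a suitably chosen saddle below it, while preserving smooth embeddedness. One might first reorganize critical values so that all minima occur at the lowest radii, then all saddles, then all maxima; this is a routine rearrangement lemma in Morse theory on $D$. At the level just below the maxima, the slice disc then meets a 3-sphere in an unlink with exactly $M$ components, each capped off by a standard disc.

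The hoped-for cancellation would identify, for each maximum, a saddle below it whose co-core arc connects the capped-off unknot back through $D$ in a way realizing a handle-cancellation pair. The mechanism is the 4-dimensional Whitney trick: locate an embedded Whitney disc $W$ in $B^4 \setminus D$ bounded by the relevant intersection arcs, with correct normal framing, and use $W$ to guide an isotopy that removes the maximum-saddle pair. Equivalently, using Proposition~\ref{prop:ribbon-sing}, one would try to push the slice disc down into $S^3$, trading each maximum for a transverse ribbon singularity along a disc-arc in $S^3$.

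The main obstacle, and the reason the conjecture has resisted for over sixty years, is precisely the failure of the smooth Whitney trick in dimension 4. Even when the algebraic intersection data identifies the correct cancellation, the required immersed Whitney disc $W \subset B^4 \setminus D$ need not admit an embedded representative with the correct framing, because $\pi_1(B^4 \setminus D)$ is typically nontrivial (it is normally generated by a meridian, but the relations are controlled by the slice disc itself) and $\pi_2(B^4 \setminus D)$ can be large. Whitney moves in $B^4 \setminus D$ can create new intersections faster than they remove old ones, and standard tricks such as finger moves, boundary twists, and Norman-Casson moves only produce \emph{immersed} ribbon discs, not embedded ones.

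A realistic proof strategy, therefore, is not direct but indirect: one tests the conjecture against potential counterexamples by constructing obstructions that vanish for ribbon knots but might be nonzero for some slice knot. This is the line pursued, for instance, by Lisca's resolution for two-bridge knots using $d$-invariants of cyclic branched covers, and by subsequent work on Montesinos and pretzel knots. The hardest step, and the one I would not know how to carry out, is to upgrade such verifications from specific families to a general structural argument that every slice disc admits an isotopy removing its maxima; absent a new idea for controlling $W$ in the smooth category, no existing technique appears capable of proving the conjecture in full generality.
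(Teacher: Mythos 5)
You are right that this is an open conjecture: the paper offers no proof (it is stated as Fox's Problem 25 and explicitly labelled open), and your refusal to manufacture one is the correct response. Your survey of the partial results (Lisca for two-bridge knots, the pretzel and Montesinos families) and of the general difficulty matches the paper's own discussion.

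However, the specific line of attack you sketch contains a step that is not merely hard but provably impossible, and the paper itself points this out. You propose to fix a smooth slice disc $D$, put it in normal form with respect to the radial Morse function, and then seek ``an isotopy removing its maxima,'' concluding that the missing ingredient is ``a general structural argument that every slice disc admits an isotopy removing its maxima.'' By \cref{prop:bad-disc}, there exist smooth slice discs that are \emph{not} ambiently isotopic (rel boundary) to any ribbon disc; the obstruction comes from the fundamental group of the disc complement via the homotopy-ribbon condition of Exercise~\ref{ex:homotopyribbon}. So no amount of Whitney-trick technology applied to a given disc can work in general: a proof of the conjecture would have to produce a \emph{different} slice disc for the same knot, not normalise the one you started with. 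This is exactly the subtlety the paper flags after stating the conjecture. A smaller imprecision: just below the maxima the cross-section is not an $M$-component unlink but the knot $K$ together with a split $M$-component unlink (the boundary of the disc sits at radius one, above all interior critical points). Neither point rescues the conjecture, of course, but the first one means your proposed endgame is aimed at a statement already known to be false.
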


This conjecture has been established for certain families of knots, e.g.\ $2$-bridge knots~\cite{lisca-2bridge} and most $3$-strand pretzel knots~\cites{greene-jabuka-pretzel,lecuona-pretzel}, but there is no general approach to attacking it at the moment. Other work on the slice-ribbon conjecture using a similar strategy includes \cites{lecuona-montesinos,bryant-sliceribbon,long-sliceribbon,AKPR-sliceribbon}. Some potential counterexamples have been proposed, e.g.~\cites{gst-sliceribbon,abetagami-sliceribbon}. A very promising candidate had been the $(2,1)$-cable of the figure eight, which was known to be nonribbon for twenty years~\cite{miyazaki-cable} but whose sliceness status was unknown. It was recently shown to not be smoothly slice~\cite{21cable-first} (see also~\cite{21cable-second}). Notably it is still open whether it is topologically slice (\cref{def:top-slice}).

The subtlety of the conjecture is seen by the following proposition. Specifically, to show that a smoothly slice knot is ribbon it is not a good strategy to prove that a given smooth slice disc is ribbon. 

\begin{proposition}[\orange{Exercise~$\square$}~\ref{ex:bad-disc}]\label{prop:bad-disc}
There exist smooth slice discs that are not ambiently isotopic (relative to the boundary) to any ribbon disc. 
\end{proposition}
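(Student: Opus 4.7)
My plan is to construct a smooth slice disc by internal connect sum of a ribbon disc with the spun trefoil, and then to distinguish it from every ribbon disc using torsion in the second homology of the double branched cover.

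Let $K$ be any ribbon knot with a chosen ribbon disc $D\subset B^4$ -- for concreteness, take $K = J\# r\ol{J}$ for a knot $J$ of one's choice, and let $D$ be the ribbon disc supplied by \cref{prop:easy-ribbon}. Let $S\subset S^4$ be the $2$-knot obtained by spinning the trefoil. Using the decomposition $B^4 \cong B^4\,\natural\, S^4$, form the interior connect sum
\[
    D' := D \,\#_{\mathrm{int}}\, S \;\subset\; B^4.
\]
Since the connect sum is performed in the interiors of both the surfaces and the ambient $4$-manifold, $\partial D' = K$, so $D'$ is a smooth slice disc for $K$.

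The key invariant is the double branched cover. Since the double branched cover of a connect sum of pairs is the connect sum of the double branched covers,
\[
    \Sigma_2(B^4, D') \;\cong\; \Sigma_2(B^4, D) \,\#\, \Sigma_2(S^4, S).
\]
One then computes $H_*(\Sigma_2(S^4, S); \mathbb{Z})$ using that the Alexander polynomial of the spun trefoil equals $\Delta_{\mathrm{trefoil}}(t) = t - 1 + t^{-1}$; evaluating at $t=-1$ yields $|H_1(\Sigma_2(S^4, S); \mathbb{Z})| = 3$, and Poincaré duality on this closed orientable $4$-manifold forces $\mathrm{Tor}\, H_2(\Sigma_2(S^4, S); \mathbb{Z}) \supseteq \mathbb{Z}/3$. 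By additivity of $H_2$ under connect sum, $H_2(\Sigma_2(B^4, D'); \mathbb{Z})$ contains $3$-torsion.

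I would then show that for every ribbon disc $R$ of $K$, the group $H_2(\Sigma_2(B^4, R); \mathbb{Z})$ is torsion-free. Using the movie picture, $R$ is a fusion of an $n$-component unlink $L_0$ along $n-1$ bands, which lifts to a natural handle decomposition of $\Sigma_2(B^4, R)$: the trivial discs bounded by $L_0$ in $B^4$ lift to $\natural_{n-1}(S^1\times D^3)$, and each band lifts to a framed circle in $\#_{n-1}(S^1\times S^2)$ along which one attaches a $2$-handle. This decomposition uses only $0$-, $1$-, and $2$-handles, so the cellular chain complex has the form $C_0\leftarrow C_1\leftarrow C_2$ with each $C_i$ free abelian; hence $H_2 = \ker(\partial_2)$ is free abelian, and in particular torsion-free. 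The $3$-torsion in $H_2(\Sigma_2(B^4, D'); \mathbb{Z})$ therefore obstructs $D'$ from being ambiently isotopic rel boundary to any ribbon disc.

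The main obstacle is the precise computation of $H_*(\Sigma_2(S^4, \text{spun trefoil}))$, which needs either a Smith-theoretic or Alexander-module argument for $2$-knots, or an explicit handle decomposition of the spun knot exterior. Once that input is in hand, the rest is a clean comparison of topological invariants.
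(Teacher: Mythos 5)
Your proof is correct, and it takes a genuinely different route from the one the paper has in mind. The paper's hint begins with the standard disc for the \emph{unknot}, connect-sums with a $2$-knot $S\subseteq S^4$ whose complement has nonabelian fundamental group (the spun trefoil again works), and then invokes \orange{Exercise~$\square$}~\ref{ex:homotopyribbon}: a ribbon disc $R$ for $K$ is homotopy ribbon, so $\pi_1(B^4\setminus \nu R)$ is a quotient of $\pi_1(S^3\setminus \nu K)$; for the unknot this forces $\pi_1$ of any ribbon disc complement to be cyclic, whereas $\pi_1$ of the connect-summed disc complement is nonabelian. You instead detect the difference in $H_2$ of the double branched cover, using $3$-torsion coming from $\Sigma_2(S^4,S)\cong\mathrm{spin}(L(3,1))$, and the fact that for a ribbon disc $R$ the manifold $\Sigma_2(B^4,R)$ has a handle decomposition with no handles of index $\geq 3$ (one $0$-handle, $n-1$ $1$-handles from $\Sigma_2(B^4,n\text{ trivial discs})\cong \natural_{n-1}(S^1\times D^3)$, and one $2$-handle per band), so $H_2=\ker\partial_2$ is free abelian. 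Both arguments work; the one ingredient you still owe is the band-move claim -- that the double branched cover of the cobordism induced by a single band/saddle is a single $2$-handle attachment -- but for your purposes all you need is that only handles of index $\leq 2$ appear, which is clear from the fact that the branch-locus saddle and the surrounding ambient handles all have index $\leq 2$ and this persists in the cover. One bonus of your approach is that it immediately produces such discs bounded by nontrivial (ribbon) knots, which the paper flags as a ``further challenge'' -- the $\pi_1$ argument handles the unknot cleanly but needs more care for general $K$, since one must rule out $\pi_1(B^4\setminus \nu D')$ being a quotient of the nonabelian knot group $\pi_1(S^3\setminus\nu K)$. Conversely, the $\pi_1$ route needs slightly less machinery (no branched-cover homology, no spun-knot computation), and the paper's exercises are set up to make it the intended path.
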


\subsection{Topologically slice knots}
So far we have worked strictly in the smooth setting. We can however loosen this restriction slightly. We will need the following definition. 

\begin{definition}
An embedding $\varphi\colon (F,\partial F)\hookrightarrow (M,\partial M)$, i.e.~a continuous map which is a homeomorphism onto its image, of a surface $F$ in a $4$-manifold $M$ is said to be \emph{locally flat} if for all $x\in F$ there is a neighbourhood $U$ of $\varphi(x)$ such that $(U,U\cap \varphi(F))$ is homeomorphic to either $(\R^4,\R^2)$, in the case that $x\in \Int{F}$, or to $(\R^4_+, \R^2_+)$, in the case that $x\in \partial F$. 
\end{definition}

The above gives rise to another notion of sliceness for knots. 

\begin{definition}\label{def:top-slice}
A knot $K\subseteq S^3=\partial B^4$ is \emph{topologically slice} if it bounds a locally flat embedded disc, called a \emph{topological slice disc}, in $B^4$. 
\end{definition}

It is straightforward to see that any smoothly slice knot is topologically slice. Via a deep result of Quinn~\citelist{\cite{quinn-endsiii}*{Theorem~2.5.1}\cite{FQ}*{Theorem~9.3}}, a topological slice disc admits a tubular neighbourhood, i.e. we could equivalently define a knot $K$ to be topologically slice if it bounds an embedded disc $\Delta$ in $B^4$, admitting a neighbourhood homeomorphic to $\Delta\times D^2$, intersecting $S^3$ in a tubular neighbourhood of $K$.  

\begin{remark}
We motivated slice knots by saying that they generalise trivial knots. Certainly the trivial knot is topologically slice. We should however be careful not to be too general. Given any knot $K\subseteq S^3$, the coned disc $\cone(K)\subseteq \cone(S^3)=B^4$ is an embedded disc bounded by $K$ in $B^4$. See \cref{fig:cone}. As shown in the following proposition, this disc is not in general locally flat. 
\end{remark}

\begin{figure}[htb]
\centering
\includegraphics[width=5cm]{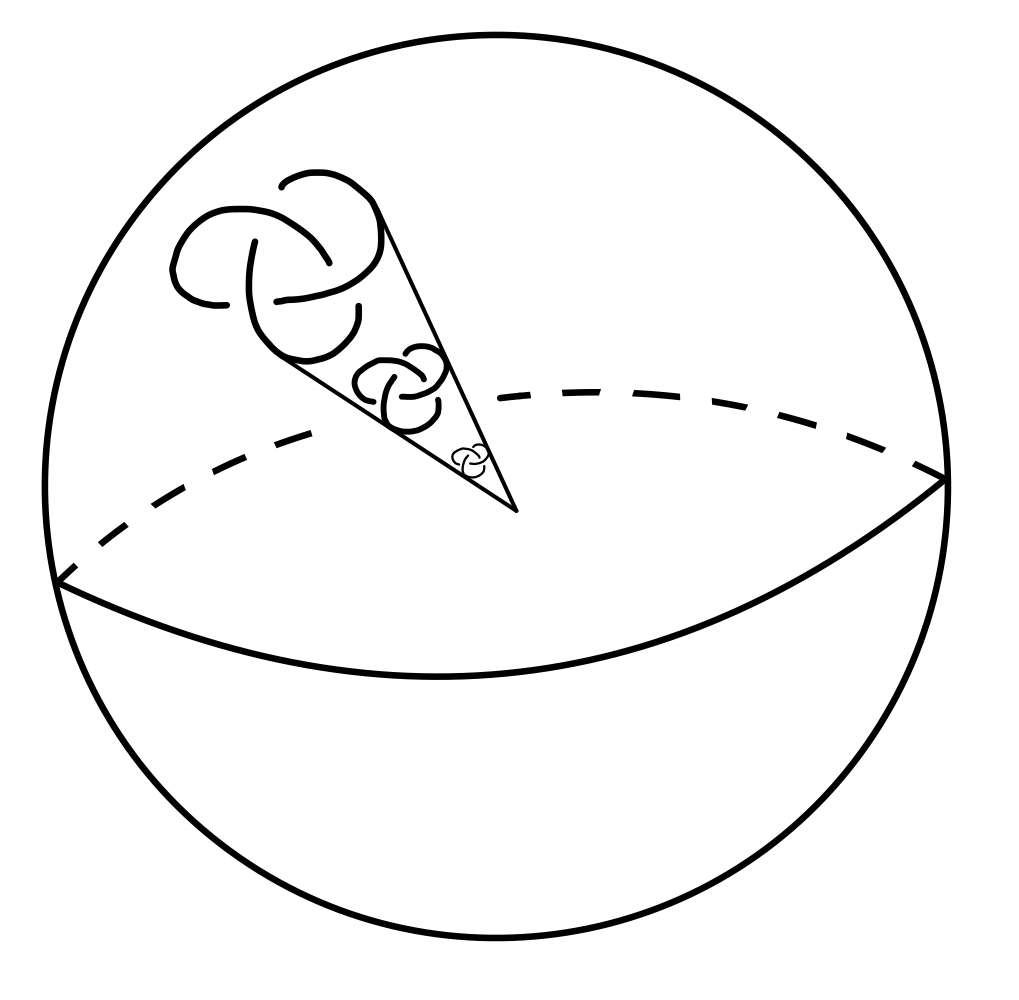}
\caption{Let $K\subseteq S^3$ be a knot. Thinking of $B^4$ as the cone on $S^3$, we get an embedded disc $\cone(K)\subseteq B^4$.}\label{fig:cone}
\end{figure}

\begin{proposition}[\green{Exercise~$\triangle$}~\ref{ex:cone}]\label{prop:cone}
Let $K\subseteq S^3$ be a knot. The coned disc $\cone(K)\subseteq \cone(S^3)=B^4$ is locally flat if and only if $K$ is the trivial knot.
\end{proposition}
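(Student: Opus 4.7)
The plan is to use a fundamental group computation. The ``if'' direction is immediate: if $K$ is trivial then $\cone(K)$ is ambient isotopic in $B^4$ to the standard flat disc, which is smoothly embedded and thus locally flat.

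For the converse, note that $\cone(K)\setminus\{c\}$ is literally $K\times(0,1]\subseteq S^3\times(0,1]=B^4\setminus\{c\}$, so $\cone(K)$ is smoothly embedded away from the cone point $c$; local flatness can only fail there. Supposing local flatness at $c$, the strategy is to compute $\pi_1(N\setminus\cone(K))$ for a suitable neighbourhood $N$ of $c$ in two different ways---once via the local flat chart, which will see an unknot complement, and once via the cone structure, which will see a complement of $K$.

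Let $\psi\colon(U,U\cap\cone(K))\to(\R^4,\R^2)$ be a local flat chart with $\psi(c)=0$, and for small $\epsilon>0$ set $N=\psi^{-1}(\mathring B^4_\epsilon)$. The radial retraction of $\R^4\setminus\{0\}$ away from $0$ preserves $\R^2\setminus\{0\}$, so pulling back via $\psi$ gives a deformation retract of $N\setminus\cone(K)$ onto a space homeomorphic to $S^3\setminus S^1$, the complement of an unknot; hence $\pi_1(N\setminus\cone(K))\cong\Z$. On the other hand, the cone structure on $B^4$ produces for each $\delta\in(0,1]$ a rescaling homeomorphism of pairs between $(B^4,\cone(K))$ and its radial sub-pair of cone-radius $\delta$ around $c$, call the latter $(B^4_\delta(c),\cone_\delta(K))$, and the latter complement deformation retracts radially onto $S^3\setminus K$. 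So $\pi_1(B^4_\delta(c)\setminus\cone(K))$ is the knot group $G_K$ for every such $\delta$. Choosing $\eta<\eta'$ with $B^4_\eta(c)\subseteq N\subseteq B^4_{\eta'}(c)$, the inclusion $B^4_\eta(c)\setminus\cone(K)\hookrightarrow B^4_{\eta'}(c)\setminus\cone(K)$ is homotopic, via the straight-line homotopy in the radial coordinate, to the rescaling $(r,\theta)\mapsto(r\eta'/\eta,\theta)$, which is a homeomorphism; hence the inclusion is a homotopy equivalence, and factors through $N\setminus\cone(K)$. The induced sequence $G_K\to\Z\to G_K$ then composes to an isomorphism, so $G_K$ embeds into $\Z$.

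Combined with $G_K^{\mathrm{ab}}=H_1(S^3\setminus K)\cong\Z$ (Alexander duality), this embedding forces $G_K\cong\Z$. The classical theorem of Papakyriakopoulos---a knot is trivial if and only if its complement has infinite cyclic fundamental group---then implies that $K$ is the unknot. The main technical point will be juggling the two distinct radial parametrisations, the ``chart-radial'' one coming from $\psi$ and the ``cone-radial'' one coming from the cone structure on $B^4$, and checking that the inclusions line up as described so as to produce the factorisation $G_K\to\Z\to G_K$ of an isomorphism.
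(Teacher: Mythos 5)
Your proof is correct and follows the route indicated by the paper's hint, namely reducing to the classical fact (Dehn's lemma / Papakyriakopoulos) that a knot with infinite cyclic group is trivial. The sandwich argument interleaving cone-subneighbourhoods with the chart-ball, using the radial self-similarity of the cone to make the outer inclusion a $\pi_1$-isomorphism and thereby forcing $G_K\hookrightarrow\Z$ and hence $G_K\cong\Z$, is the standard way to carry this out, and your details are sound.
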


The following result of Freedman and Quinn gives a powerful method to construct topologically slice knots. For the definition of the Alexander polynomial, see e.g.~\cite{rolfsen-book}*{Chapter~8}. (See~\cite{feller-topslicegenus} for a generalisation to the topological slice genus.)

\begin{theorem}[\citelist{\cite{FQ}*{Theorem~11.7B}\cite{garoufalidis-teichner}}]
Let $K\subseteq S^3$ be a knot. If the Alexander polynomial $\Delta_K(t)\doteq 1$, then $K$ is topologically slice.
\end{theorem}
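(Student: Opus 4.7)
The plan is to deduce the theorem from Freedman's topological disk embedding theorem. The hypothesis $\Delta_K(t)\doteq 1$ is a homological/algebraic condition (triviality of the Alexander module of the infinite cyclic cover of the knot exterior), and the strategy is to convert it into the geometric statement that certain immersed disks in $B^4$ can be promoted to embedded, locally flat ones.

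I would first do the algebraic preparation on a Seifert surface. By classical work of Levine on $S$-equivalence of Seifert matrices, the condition $\Delta_K \doteq 1$ implies that, after stabilising a chosen Seifert surface (i.e.\ adding tubes in $S^3$), $K$ bounds a Seifert surface $\Sigma \subseteq S^3$ of genus $g$ together with a symplectic basis $\{a_1,b_1,\dots,a_g,b_g\}$ of $H_1(\Sigma)$ on which the Seifert form vanishes on the Lagrangian $\langle a_1,\dots,a_g\rangle$. In particular $\lk(a_i,a_j^+)=0$ for all $i,j$, so the $0$-framing of each $a_i$ (as a curve in $S^3$) agrees with its framing coming from $\Sigma$.

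Next I would push $\Sigma$ into the interior of $B^4$, keeping $\partial\Sigma=K$, and set $X:=B^4\sm\nu(\Sigma)$. By Alexander duality $\pi_1(X)\cong\Z$, generated by a meridian $\mu$ of $\Sigma$. Since each $a_i$ is null-homologous in $X$ (with trivial image in $\pi_1(X)=\Z$, because $\lk(a_i,K)=0$), it bounds a generically immersed disk $\Delta_i$ in $X$. The vanishing of the Seifert form on the chosen Lagrangian arranges that, after a Whitney-type clean-up, the $\Delta_i$ have algebraically trivial self-intersections and mutual intersections. Moreover, the dual basis curves $b_i$ (pushed slightly off $\Sigma$) furnish geometric duals: $b_i$ meets $\Delta_i$ transversely once and is disjoint from the other $\Delta_j$. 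This is the configuration to which Freedman's machinery is tailored.

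The key step is then to invoke Freedman's topological disk embedding theorem: since $\pi_1(X)=\Z$ lies in the class of good groups, the immersed data $(\Delta_i,b_i)$ can be upgraded to pairwise disjoint, locally flat, properly embedded disks $D_i\subseteq X$ with $\partial D_i=a_i$. I would conclude by surgering $\Sigma$ along the $D_i$: for each $i$, excise an annular neighbourhood of $a_i$ in $\Sigma$ and glue in two parallel copies of $D_i$. The result is a locally flat embedded disk in $B^4$ bounded by $K$, so $K$ is topologically slice. The main obstacle is clearly the black-box invocation of the disk embedding theorem itself — a theorem whose proof requires the construction and geometric control of Casson handles and genuinely depends on the good-group hypothesis, the (re)verification of which for $\Z$ in this setting was the content of \cite{garoufalidis-teichner}. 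The algebraic preparation and final surface surgery are comparatively routine.
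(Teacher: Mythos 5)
The paper states this theorem without proof, citing Freedman--Quinn and Garoufalidis--Teichner; there is no in-text argument to compare against. Your sketch follows the general shape of the Garoufalidis--Teichner re-proof --- push in a Seifert surface, bound a half-basis by immersed discs in the complement, apply the disc embedding theorem, surger --- which is a sensible route. But the places where the hypothesis $\Delta_K\doteq 1$ actually does work are exactly the places where your sketch is vague or wrong, and the gap is serious because as written the argument would prove more than is true.

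Your algebraic preparation, ``the Seifert form vanishes on a Lagrangian $\langle a_1,\dots,a_g\rangle$,'' is precisely algebraic sliceness, which is strictly weaker than $\Delta_K\doteq 1$; Casson--Gordon produced algebraically slice knots that are not topologically slice, so the subsequent steps cannot all be correct under this hypothesis alone. What a trivial Alexander polynomial gives, via Levine's $S$-equivalence results, is a stronger Seifert matrix normal form ($S$-equivalence to the empty matrix), and that extra structure is what controls the \emph{equivariant} intersection and self-intersection data $\lambda,\mu$ valued in $\Z[\pi_1(X)]\cong\Z[t,t^{-1}]$ --- not merely the $t^0$-coefficient, which is all the Lagrangian condition governs. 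The disc embedding theorem needs the full $\Z[t,t^{-1}]$-valued obstruction to vanish, and your proposal attributes this to the weaker Lagrangian condition. There are also two smaller slips. First, $\pi_1(X)\cong\Z$ is true for the complement of a pushed-in Seifert surface, but it does not follow from Alexander duality, which only yields $H_1(X)\cong\Z$; it requires a Seifert--van Kampen argument using that the intermediate radial level set $S^3\sm\nu(\Sigma')$ is a handlebody whose free fundamental group normally generates the commutator subgroup of the knot group, so the quotient is abelian. Second, the curves $b_i$ cannot serve as geometric duals to the discs $\Delta_i$: a circle and a disc in a $4$-manifold have negative expected intersection dimension, and moreover $b_i\subseteq\partial X$ is disjoint from the interior of each $\Delta_i$. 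The duals required by Freedman's theorem are immersed spheres, typically built by pushing in and capping off the tori $b_i\times S^1\subset\Sigma\times S^1\subset\partial X$, and arranging these caps with the correct framings and intersections is again part of where $\Delta_K\doteq 1$ must be used.
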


As a consequence of the above theorem, the (untwisted) Whitehead double of any knot (\cref{fig:whitehead-doubling}) is topologically slice, since it has Alexander polynomial one. On the other hand, many such knots are not smoothly slice.

\begin{figure}[htb]
	\centering
\begin{tikzpicture}
        \node[anchor=south west,inner sep=0] at (0,0){%
        \includegraphics[width=13cm]{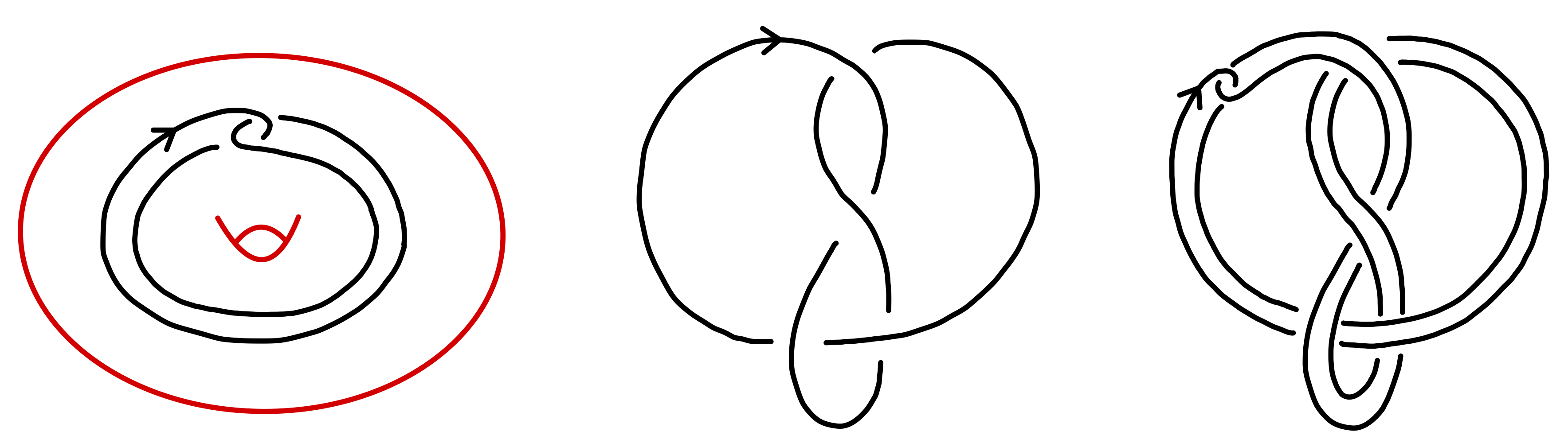}};%
		\node at (-0.75,1.7) {$S^1\times D^2$};
		\node at (2.8, 0.7) {$\Wh^+$};
		\node at (7.5,0) {$K$};
		\node at (12.3,0) {$\Wh^+(K)$};
	\end{tikzpicture}
\caption{Whitehead doubling. On the left we have a knot $\Wh^+$ in a solid torus $S^1\times D^2$. In the middle we show the figure eight knot. Given an arbitrary knot $K$, the (positive clasped, untwisted) \emph{Whitehead double} of $K$, denoted by $\Wh^+(K)$, is the image of $\Wh^+$ under a map identifying $S^1\times D^2$ with a tubular neighbourhood of $K$, so that the longitude $S^1\times \{*\}$ of $S^1\times D^2$ is sent to the Seifert longitude of $K$. On the right we show the (positive clasped, untwisted) Whitehead double of the figure eight. The negative clasped Whitehead double $\Wh^-(K)$ is defined in the same way, using as input the knot $\Wh^-$ in $S^1\times D^2$ obtained from $\Wh^+$ by changing both crossings. Whitehead doubling is a special case of the \emph{satellite construction} on knots (\cref{def:satellite}). }\label{fig:whitehead-doubling}
\end{figure}

\begin{theorem}
There exist knots that are topologically slice but not smoothly slice, e.g.\ the positive clasped untwisted Whitehead double of the right-handed trefoil knot.
\end{theorem}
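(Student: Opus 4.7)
The plan is to establish the two assertions separately: that $\Wh^+(T_{2,3})$ is topologically slice, and that it is not smoothly slice.

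For the topological half, I would apply the Freedman--Quinn theorem just stated, so it suffices to check that the Alexander polynomial of $\Wh^+(T_{2,3})$ is trivial. In fact every untwisted Whitehead double has Alexander polynomial $\doteq 1$, regardless of the companion knot. This follows from the satellite formula $\Delta_{P(K)}(t)\doteq \Delta_P(t)\,\Delta_K(t^w)$, where $w$ is the winding number of the pattern $P\subseteq S^1\times D^2$ in the solid torus: the Whitehead pattern $\Wh^+$ has winding number zero in $S^1\times D^2$ and is itself the unknot when viewed in $S^3$ via the standard embedding of the solid torus, so $\Delta_{\Wh^+}(t)\doteq 1$ and hence $\Delta_{\Wh^+(K)}(t)\doteq \Delta_K(1)\doteq 1$ for any $K$.

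The smooth half is the main obstacle and requires a concordance invariant sensitive to the difference between smooth and topological sliceness. I would use the Ozsv\'{a}th--Szab\'{o} tau invariant $\tau$ from Heegaard Floer homology, which is an integer-valued concordance homomorphism satisfying $|\tau(K)|\leq g_4(K)$; in particular $\tau(K)\neq 0$ forces $K$ to not be smoothly slice. The required inputs are the computation $\tau(T_{2,3})=1$ for the right-handed trefoil, together with Hedden's theorem that $\tau(\Wh^+(K))=1$ whenever $\tau(K)>0$. Combining these yields $\tau(\Wh^+(T_{2,3}))=1\neq 0$, so $\Wh^+(T_{2,3})$ is not smoothly slice, completing the proof.

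The hard work is hidden in the construction of $\tau$ and in Hedden's doubling formula, both of which lie well beyond the methods introduced so far in these notes. As alternatives, one could replace $\tau$ by Rasmussen's $s$-invariant from Khovanov homology, or invoke the original gauge-theoretic approach (going back to Akbulut) which applies Donaldson's diagonalisation theorem to a smooth $4$-manifold built from a hypothetical smooth slice disc, such as the double cover of $B^4$ branched along the disc.
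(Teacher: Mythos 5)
Your proposal is correct, and the two halves of your argument sit in different relationships to the paper. For topological sliceness you take exactly the paper's route (Freedman--Quinn applied to the fact that an untwisted Whitehead double has $\Delta \doteq 1$), and you even supply a justification via the satellite formula $\Delta_{P(K)}(t)\doteq \Delta_{P}(t)\Delta_K(t^w)$ that the paper leaves implicit. For the failure of smooth sliceness, however, you take a genuinely different primary route. The paper does not give an argument at all: it attributes the result to Casson and Akbulut (unpublished) and points to the first published accounts by Gompf, Cochran--Lickorish, and Cochran--Gompf, all of which rest on Donaldson's diagonalisation theorem applied to a $4$-manifold built from a hypothetical smooth slice disc. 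Your main argument instead uses the Ozsv\'ath--Szab\'o concordance homomorphism $\tau$ together with Hedden's formula $\tau(\Wh^+(K))=1$ when $\tau(K)>0$, applied with $\tau(T_{2,3})=1$; you flag the gauge-theoretic route (and the $s$-invariant route) only as alternatives. Both approaches are valid and standard. The Heegaard Floer route is anachronistic relative to the original discovery but is cleaner to state once the machinery is available, since $\tau$ is a concordance homomorphism and the obstruction is just $\tau\neq 0$; the Donaldson route is the one with historical priority and is closer to the spirit of the paper's surrounding discussion of gauge theory. Either way, your write-up correctly identifies where the real work is (the computation of $\tau$ or, alternatively, the gauge-theoretic obstruction), which lies outside what these notes develop.
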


The above was first shown by Casson and Akbulut, in independent unpublished work, using Donaldson's diagonalisation theorem~\cites{donaldson,donaldson-connections,donaldson-yangmills}. The first published accounts are by Gompf~\cite{gompf-smoothconc}, Cochran--Lickorish~\cite{cochran-lickorish}, and Cochran--Gompf~\cite{cochran-gompf}.

However, for many examples of Whitehead doubles, it is still open whether they are smoothly slice. For the next question, let $L$ denote the left-handed trefoil and let $4_1$ denote the figure eight. 

\begin{question}[Open]\label{ques:special-wh}
Is $\Wh^+(L)$ or $\Wh^\pm(4_1)$ smoothly slice?
\end{question}

It is not difficult to see that if the knot $K$ is smoothly slice, then so is $\Wh^\pm(K)$; see \green{Exercise~$\triangle$}~\ref{ex:wh-slice}. The converse is an interesting open question, generalising \cref{ques:special-wh}.

\begin{conjecture}[Open~\cite{kirbylist}*{Problem~1.38}]\label{conj:whitehead-injective}
Let $K\subseteq S^3$ be a knot. The knot $Wh^\pm(K)$ is smoothly slice if and only if $K$ is smoothly slice. 
\end{conjecture}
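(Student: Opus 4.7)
The plan is to treat the two implications separately. The forward direction --- that smooth sliceness of $K$ implies smooth sliceness of $\Wh^\pm(K)$ --- is the standard ``satellite-of-a-slice-knot-is-slice'' argument, which I would prove as follows. Given a smooth slice disc $D\subset B^4$ for $K$, fix a tubular neighbourhood $N(D)\cong D\times D^2$; this is itself a smooth $4$-ball, call it $B_0$. Under the identification $\partial N(D)\cong S^3$, the tubular neighbourhood $N(K)\subset S^3=\partial B^4$ is carried to $\partial D\times D^2$, so the satellite $\Wh^\pm(K)$ is carried to the pattern $\Wh^\pm\subset S^1\times D^2\subset\partial B_0$. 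Since $\Wh^\pm$ is included via the standardly embedded solid torus and has a two-crossing diagram in $S^3$, it is the unknot in $\partial B_0$, hence bounds a smoothly embedded disc in $B_0\subset B^4$; this disc is a smooth slice disc for $\Wh^\pm(K)$.

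The converse --- that smooth sliceness of $\Wh^\pm(K)$ forces smooth sliceness of $K$ --- is the genuinely open content of the conjecture and is the main obstacle. Because $\Delta_{\Wh^\pm(K)}\doteq 1$ by a Seifert-matrix computation, every Whitehead double is topologically slice via the Freedman--Quinn theorem stated above, so any obstruction must come from a strictly smooth concordance invariant. My plan would be to search for a smooth invariant $\Phi$ satisfying both (i) $\Phi(\Wh^\pm(K))=0$ if and only if $\Phi(K)=0$, and (ii) $\Phi$ obstructs smooth sliceness. Natural candidates are the Ozsv\'ath--Szab\'o $\tau$ and $\Upsilon$, the Rasmussen $s$-invariant, the Heegaard Floer correction terms of double branched covers, and instanton-theoretic invariants, all of which have partially known satellite behaviour under Whitehead doubling.

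The difficulty lies in property (i). The known satellite formulas only bound $\Phi(\Wh^\pm(K))$ from one side by an expression in $\Phi(K)$, and these bounds collapse as soon as $K$ fails to be sharply detected. For instance Hedden's theorem yields $\tau(\Wh^+(K))=1$ when $\tau(K)>0$ and $\tau(\Wh^+(K))=0$ otherwise, so $\tau$ retains no information once $\tau(K)\le 0$; an analogous degeneration occurs for $s$ and for the correction-term invariants. A successful attack would likely require a genuinely new invariant --- perhaps from involutive or bordered Heegaard Floer homology, equivariant gauge theory on the infinite cyclic cover, or from the filtrations of the concordance group discussed later in these notes --- capable of tracking information through the winding-number-zero satellite operation. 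As a stepping stone I would first attempt the special cases of \cref{ques:special-wh}; successfully obstructing $\Wh^+(L)$ or $\Wh^\pm(4_1)$ from being smoothly slice would likely isolate techniques that could then be iterated to address the general conjecture.
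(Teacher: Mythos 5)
This statement is an open conjecture; the paper does not prove it and flags it as such, with a pointer to Kirby's problem list. You correctly recognise this: you prove only the forward implication (which is \green{Exercise~$\triangle$}~\ref{ex:wh-slice} in the notes) and for the converse you give a research plan, not an argument. Your proof of the forward implication is the standard one and is correct: a tubular neighbourhood $N(D)\cong D\times D^2$ of a smooth slice disc $D$ for $K$ is a smooth $4$-ball in $B^4$, and the satellite solid torus --- taken with the $0$-framing, which the disc itself provides, since a parallel push-off of $D$ nullhomologises the Seifert longitude of $K$ --- is carried to a standard unknotted solid torus in $\partial N(D)\cong S^3$, where $\Wh^\pm$ is unknotted and so bounds a smooth disc inside $N(D)\subset B^4$. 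It would strengthen the write-up to make the framing check explicit, since it is the one place where the argument uses that $D$ is a disc in $B^4$ and not merely some surface. Your account of the converse is also accurate and consistent with the paper's discussion: every untwisted Whitehead double has Alexander polynomial one and is therefore topologically slice, so any obstruction must be purely smooth, and the known smooth invariants ($\tau$, $s$, $d$-invariants) collapse to step functions under winding number zero satellites such as Whitehead doubling, which is exactly why even the specific cases in \cref{ques:special-wh} remain open. In short, the easy half is fine and you rightly declined to claim the hard half.
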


As mentioned earlier, Donaldson's diagonalisation theorem was used to show that the positive clasped untwisted Whitehead double of the right-handed trefoil is not smoothly slice. This is a relatively subtle sliceness obstruction, since e.g.\ it does not obstruct topological sliceness. 
There are numerous other slicing obstructions, including many topological sliceness obstructions. We will return to this topic in~\cref{sec:conc-gps-filtration}, where we will explain a method to arrange sliceness obstructions in order of strength, in some sense. We defer that discussion to first describe some connections between slice knots and $4$-manifold topology.

\subsection{Links} The notions in this section can be generalised to links, i.e.~embeddings $\sqcup S^1\hookrightarrow S^3$. For example, we have the following definition.

\begin{definition}
A link $L\subseteq S^3=\partial B^4$ is said to be \emph{(strongly) smoothly slice} if the components bound a collection of pairwise disjoint smooth slice discs in $B^4$. If the discs are only locally flat, the link is said to be \emph{(strongly) topologically slice}. 
\end{definition}

There is a parallel notion of \emph{weak} sliceness of links, in both the topological and smooth settings. For this we do not require a collection of pairwise disjoint embedded discs, but rather just some embedded planar surface (either smooth or locally flat). 

Unfortunately we will not have time to pursue links thoroughly. We limit ourselves to saying that the open questions mentioned in this section have analogues for links: it is open whether the slice-ribbon conjecture is true for links and whether the Whitehead double of a link $L$ is smoothly slice if and only if $L$ is smoothly slice. It is also open whether the Whitehead double of every link is topologically slice, and an answer would have deep consequences for fundamental open questions in 4-manifold topology (see e.g.~\cite{DETbook-enigmata}).

\section{\texorpdfstring{Exotic smooth structures on $\R^4$}{Exotic smooth structures on R4}}\label{sec:ex-sm-str}

By definition, a smooth structure on a topological manifold $M$ is a maximal atlas of charts on $M$ such that the transition maps are smooth. \textit{A priori} a given topological manifold can admit finitely many (including none) or infinitely many smooth structures. The study of existence and uniqueness questions for smooth structures comprises the field of \emph{smoothing theory}. Notably, the sphere $S^7$ has 28 distinct smooth structures up to orientation preserving diffeomorphism~\cites{milnor-exotic,kervaire-milnor}, and whether $S^4$ has more than one smooth structure is the content of the (still open) smooth $4$-dimensional Poincar\'{e} conjecture. Smoothing theory is well developed in dimensions five and higher, and for example, using technology of Kirby--Siebenmann from~\cite{KS}, it can be shown that closed manifolds of dimension five and higher have finitely many (possibly zero) smooth structures. Smoothing closed $4$-manifolds is more complicated. There exist topological $4$-manifolds which do not admit any smooth structures, such as Freedman's $E8$-manifold~\cite{F}, while there are closed $4$-manifolds admitting infinitely many smooth structures, such as the $K3$ surface~\cite{fintushel-stern-knot-surgery}.

The existence and uniqueness questions for smooth structures can also be asked for noncompact manifolds. The following result shows that Euclidean spaces are well behaved assuming $n\neq 4$. 

\begin{theorem}[\cites{moise-book,stallings-euclidean}]
For $n\neq 4$ there is a unique smooth structure on $\R^n$, up to diffeomorphism.
\end{theorem}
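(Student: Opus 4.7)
The plan is to split by dimension. For $n \leq 2$, uniqueness is classical: the $1$-manifolds $\R$ and $S^1$ have unique smooth structures by direct inspection, and every topological surface carries a unique smooth structure (via Radó's triangulation theorem, together with the fact that the PL and smooth categories coincide on surfaces). For $n = 3$ I would invoke Moise's theorem, which endows every topological $3$-manifold with a unique PL, and hence smooth, structure. The nontrivial case is therefore $n \geq 5$.

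For $n \geq 5$, let $M$ be a smooth manifold homeomorphic to $\R^n$; the goal is to produce a diffeomorphism $M \diffeo \R^n$. I would follow Stallings' engulfing strategy. The centrepiece is the following \emph{engulfing lemma}: every compact subset $K \subset M$ is contained in an open subset $U \subset M$ that is diffeomorphic to $\R^n$. Granting this, I would choose an exhaustion $K_1 \subset K_2 \subset \cdots$ of $M$ by compact sets, together with open sets $U_i \diffeo \R^n$ satisfying $K_i \subset U_i$ and $\overline{U_i} \subset U_{i+1}$ (after passing to a subsequence). I would then inductively build compatible diffeomorphisms $\varphi_i \colon U_i \to V_i \subset \R^n$ onto nested open balls, using the uniqueness of tubular neighbourhoods and the Cerf--Palais disc theorem to adjust each $\varphi_{i+1}$ so as to agree with $\varphi_i$ on a neighbourhood of $K_i$. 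Passing to the limit assembles the desired diffeomorphism $M \to \R^n$.

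The main obstacle is the engulfing lemma itself. Its proof exploits that $M$ is contractible and simply connected at infinity (since it is topologically $\R^n$), combined with general position arguments applied to the handles of a compact regular neighbourhood of $K$. Pushing $2$-handles off themselves demands codimension at least $3$, hence $n \geq 5$. To remain in the smooth category one either invokes the Cairns--Hirsch theorem on compatibility between smooth and PL structures (available in the dimensions under consideration), or carries out Stallings' argument in the PL category and smooths the outcome. It is precisely the failure of general position in dimension $4$ that excludes $n = 4$ from the statement; indeed, as was discovered shortly after the work of Freedman and Donaldson, $\R^4$ admits uncountably many pairwise non-diffeomorphic smooth structures.
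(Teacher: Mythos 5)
The paper does not give a proof of this theorem; it simply cites Moise (for $n\leq 3$) and Stallings (for $n\geq 5$), so there is no in-text argument to compare against. Your sketch is, however, a faithful reconstruction of the strategy behind those two references, and the dimension split you use ($n\leq 1$ by hand, $n=2$ by Rad\'{o}, $n=3$ by Moise, $n\geq 5$ by Stallings engulfing plus PL-to-DIFF smoothing theory) is exactly the standard one.

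A few points where the sketch is looser than a full proof would need to be. First, the phrase \emph{pushing $2$-handles off themselves} is not quite the mechanism in Stallings's engulfing theorem: the constraint $n\geq 5$ comes from needing to engulf a $2$-complex (and, dually, an $(n-3)$-complex from infinity) using general position, and codimension $\geq 3$ is what makes the inductive engulfing of skeleta go through; the argument is about pushing low-dimensional polyhedra into open sets, not handles off themselves. Second, the ``patching'' step is the smooth analogue of Brown's monotone-union-of-open-cells theorem, and it does require care: the Cerf--Palais disc theorem is the right tool (it shows that for nested compact discs $D_i\subset \mathrm{int}\, D_{i+1}$ the difference $D_{i+1}\setminus \mathrm{int}\, D_i$ is a standard annulus), but one must also use collars to make the inductively constructed diffeomorphisms agree smoothly across the gluing spheres, which is a genuine technical point your sketch elides. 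Third, the passage between PL and DIFF is correctly attributed to Cairns--Hirsch/Munkres smoothing theory; it is worth noting explicitly that the relevant obstructions live in cohomology groups of $M$ with coefficients in certain groups of homotopy spheres, and that these vanish because $M$ is contractible, which is why the translation is clean for $\R^n$ in every dimension where Stallings's PL result applies. None of these are gaps in the strategy --- they are the places where a full write-up would need to do real work.
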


By contrast, there are \textit{uncountably} many smooth structures on $\R^4$~\cite{taubes-uncountable}. 

\begin{definition}
Let $\R^4_\std$ denote $\R^4$ with its standard smooth structure. A smooth $4$-manifold which is homeomorphic to $\R^4_\std$, but not necessarily diffeomorphic to $\R^4_\std$, is called an \emph{$\R^4$-homeomorph}. An $\R^4$-homeomorph which is not diffeomorphic to $\R^4_\std$ is called an \emph{exotic} $\R^4$, and such a smooth structure is called an \emph{exotic} smooth structure on $\R^4$. 
\end{definition}

There are two distinct types of $\R^4$-homeomorphs, as per the following definition. 

\begin{definition}
Let $\mathcal{R}$ be an $\R^4$-homeomorph. The manifold $\mathcal{R}$ is said to be \emph{small} if it admits a smooth embedding $\mathcal{R}\hookrightarrow \R^4_\std$. It is called \emph{large} otherwise.\footnote{There are two slightly different definitions of small/large $\R^4$-homeomorphs in the literature. Ours coincides with the definition of Scorpan's book~\cite{scorpan-book}. In the book by Gompf--Stipsicz~\cite{gompf-stipsicz-book}, a large $\R^4$-homeomorph is one which contains a compact subset that does not embed in $\R^4_\std$, and an $\R^4$-homeomorph is called small if it is not large. It is not known whether the two definitions are equivalent.}
\end{definition}

Knots and links feature prominently in the known constructions of exotic $\R^4$s. In this section we describe two such constructions, one of large exotic $\R^4$s, and one of small exotic $\R^4$s.

\subsection{Large exotic \texorpdfstring{$\R^4$s}{R4s} using 0-traces}

For any knot $K$ we have the \emph{$0$-trace}, which is by definition the smooth $4$-manifold obtained by adding a $0$-framed $2$-handle to $B^4$ along $K\subseteq S^3$, and then smoothing corners. One of the two key ingredients in the forthcoming construction is the following characterisation of slice knots. 

\begin{lemma}[Trace embedding lemma, \orange{Exercise~$\square$}~\ref{ex:trace-embedding-lemma}]\label{lem:trace-embedding-lemma}
Let $K\subseteq S^3$ be a knot. The $0$-trace $X_0(K)$ admits a smooth (resp. locally collared) embedding into $\R^4_\std$ if and only if $K$ is smoothly (resp. topologically) slice.
\end{lemma}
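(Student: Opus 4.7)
My plan is to use the correspondence between slice discs for $K$ and embedded cores of the $2$-handle of $X_0(K)$.

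For ($\Leftarrow$), given a slice disc $D\subseteq B^4$ for $K$, I identify $B^4$ with the standard unit ball in $\R^4_\std$ and reflect $D$ through $S^3$ via the inversion $x\mapsto x/|x|^2$, obtaining a properly embedded disc $D^r\subseteq \overline{\R^4\sm B^4}$ with $\partial D^r=K$. A tubular neighbourhood $\nu(D^r)\cong D^r\times D^2$---smooth if $D$ is smooth, and supplied by the theorem of Quinn cited just after \cref{def:top-slice} in the locally flat case---serves as a $2$-handle attached to $B^4$ along $K$. Its framing is $0$ because a parallel copy $D^r\times\{p\}\subseteq\nu(D^r)$ (disjoint from $D^r$) exhibits a parallel pushoff of $K$ that bounds a disc disjoint from $D^r$, yielding zero self-linking. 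Thus $B^4\cup\nu(D^r)\cong X_0(K)$ is embedded, and locally collared in the topological case, in $\R^4_\std$.

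For ($\Rightarrow$), given $\phi\colon X_0(K)\hookrightarrow\R^4_\std$, the candidate slice disc is $\phi(C)$, where $C\subset X_0(K)$ is the core of the $2$-handle; it is an embedded disc in $\R^4$ with boundary $\phi(K)$, lying in $\phi(h)$ and hence in the closure of $\R^4\sm\phi(B^4)$. The obstacle is that $\phi(B^4)$ is a priori some embedded $4$-ball in $\R^4$, not necessarily the standard one, so $\phi(C)$ does not directly give a slice disc in $B^4_\std$. The main step---which I expect to be the crux of the argument---is to reduce to the case $\phi(B^4)=B^4_\std$ and $\phi|_{S^3}=\mathrm{id}_{S^3}$ by composing with a self-map of $\R^4_\std$. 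In the smooth case, any codimension-$0$ smooth embedding $B^4\hookrightarrow\R^4$ is smoothly ambient isotopic to the standard inclusion: trivialise the derivative at $\phi(0)$ through a path in $GL^+(4)$, then apply the shrinking isotopy $\phi_t(x)=\phi(tx)/t$ and lift via ambient isotopy extension; a further small isotopy uses $\pi_0\Diff^+(S^3)=0$ to arrange $\phi|_{S^3}=\mathrm{id}$. In the locally collared topological case, the analogous reduction follows from the Brown--Mazur topological Schoenflies theorem applied to the locally flat $3$-sphere $\phi(S^3)\subset S^4=\R^4\cup\{\infty\}$, together with an extension of $\phi|_{S^3}^{-1}$ across the resulting ball.

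After this reduction, $\phi(C)\subseteq \overline{\R^4\sm B^4_\std}$ is an embedded disc with boundary $K\subset S^3$, and the inversion $\iota(x)=x/|x|^2$---a diffeomorphism of $\R^4\sm\{0\}$ fixing $S^3$ pointwise and interchanging its two sides---carries $\phi(C)$ to a disc $\iota(\phi(C))\subseteq B^4_\std$ bounded by $K$, giving the desired smooth (or locally flat) slice disc for $K$.
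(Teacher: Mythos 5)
Your proposal is correct, and it follows essentially the same approach as the paper's hint for \orange{Exercise~$\square$}~\ref{ex:trace-embedding-lemma}: invert a slice disc to obtain a $0$-framed $2$-handle in the backward direction, and in the forward direction reduce to the case where the embedded copy of $B^4$ is standard, in the smooth case via Palais's disc theorem (avoiding the open smooth $4$-dimensional Schoenflies conjecture, exactly as the hint warns) and in the locally flat case via the Brown--Mazur topological Schoenflies theorem. The one cosmetic difference is that the hint phrases Palais's theorem as ``the closure of the complement of a smoothly embedded $4$-ball in $S^4$ is a $4$-ball,'' whereas you phrase it as an ambient isotopy carrying $\phi(B^4)$ to $B^4_\std$ together with a Cerf-type normalisation of $\phi|_{S^3}$; these amount to the same thing, and your version has the small advantage of literally producing a slice disc for $K$ rather than for a knot isotopic to $K$.
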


We will also need the following powerful result.

\begin{theorem}[\citelist{\cite{quinn-endsiii}*{Corollary~2.2.3}\cite{FQ}*{Theorem~8.2, Section~8.7}}]\label{thm:quinn-smoothing}
Let $M$ be a connected, noncompact $4$-manifold. If desired, fix a smooth structure on any collection of connected components of $\partial M$. There is a smooth structure on $M$ extending the given smooth structure on (a subset of) $\partial M$. 
\end{theorem}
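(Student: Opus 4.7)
The plan is to combine classical Kirby--Siebenmann smoothing theory with the fact that a connected noncompact $4$-manifold has the homotopy type of a $3$-complex, in order to kill the primary smoothing obstruction; Quinn's deep input enters in justifying that the dimension-4 smoothing machinery works at all in this setting.

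First I would use the topological collar theorem to extend the prescribed smooth structures on the chosen components $U \subseteq \partial M$ to an open topological collar $U \times [0,1)$, which can be smoothed compatibly. This reduces us to the following problem: find a smooth structure on $M$ agreeing with a given smooth structure on an open neighbourhood of a closed subset $A \subseteq M$ (where $A$ contains the chosen boundary components together with the other boundary components, if we wish to leave the latter free, handled separately).

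Classical smoothing theory says that smooth structures on $M$ rel $A$ correspond to sections, rel $A$, of a bundle over $M$ with fibre $\TOP/\DIF$. The obstructions to constructing such a section live in $H^{k+1}(M, A; \pi_k(\TOP/\DIF))$. Since $\pi_k(\TOP/\DIF) = 0$ for $k \leq 3$ and $\pi_4(\TOP/\DIF) = \Z/2$, the only possible obstruction lies in $H^4(M,A;\Z/2)$. The key geometric observation is that any connected noncompact $4$-manifold admits a proper Morse function with no index-$4$ critical points (an index-$4$ critical point would cap off a closed component), so $M$ is homotopy equivalent, rel $A$, to a $3$-dimensional CW complex. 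Hence $H^4(M,A;\Z/2) = 0$ and the obstruction vanishes.

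The hard part, and the reason this is a theorem of Quinn rather than a formal consequence of Kirby--Siebenmann, is that in dimension four the passage from a vanishing obstruction class to an actual smooth structure requires embedding topologically locally flat Whitney discs to straighten individual handles, and Freedman's disc embedding theorem has hypotheses which fail in general. Quinn's contribution is a controlled version, applicable at the ends of a noncompact $4$-manifold, in which the potential bad sets for handle straightening can be pushed out to infinity. Thus the noncompactness hypothesis is used twice: once cohomologically, to eliminate the primary obstruction, and once geometrically, to make the handle-by-handle smoothing procedure converge. The prescribed smooth structure on the subset of $\partial M$ is preserved throughout because the entire construction is carried out rel the smoothed collar $A$.
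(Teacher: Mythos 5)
The paper does not prove this theorem; it is stated with citations to Quinn and to Freedman--Quinn, where the proof lives, so there is no internal argument to compare against. Your sketch captures the right shape of what the literature does --- reduce to an obstruction in $H^4(M,A;\Z/2)$ and kill it using noncompactness --- but there are two substantive gaps. The main one is that ``classical smoothing theory,'' i.e.\ the identification of smooth structures rel $A$ with sections of a $\TOP/\DIF$-bundle, is precisely what is \emph{not} available in dimension $4$. This is not just a difficulty in converting a vanishing obstruction class into an actual structure: the correspondence itself is false for closed $4$-manifolds (for instance, the topological $4$-manifold with intersection form $E_8 \oplus E_8$ has $\ks = 0$ but is not smoothable, by Donaldson). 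So invoking the bundle-theoretic classification already presupposes essentially the theorem you are trying to prove, and your paragraph on Quinn's controlled handle-straightening is a restatement of what the theorem accomplishes rather than a proof of it. That is fine as an orientation for the reader, but it means the mathematical content remains fully black-boxed.

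Secondly, the argument that a connected noncompact $4$-manifold has the homotopy type of a $3$-complex is asserted via a proper Morse function, but in the topological category this is circular: topological $4$-manifolds do not come with Morse functions or handle decompositions prior to smoothing (Quinn's results are what provide topological handle decompositions for noncompact $4$-manifolds in the first place). The vanishing $H^4(M,A;\Z/2)=0$ should instead be established directly, e.g.\ via Poincar\'e--Lefschetz duality for noncompact manifolds. There is also a small bookkeeping slip: the nonzero low-degree homotopy group is $\pi_3(\TOP/\DIF)\cong\Z/2$, not $\pi_4$; the obstruction to extending a lift over the $4$-skeleton lies in $H^4(M,A;\pi_3(\TOP/\DIF))$, so your stated target group is correct but the reason given is shifted by one.
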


We are now ready for the construction. 

\begin{construction}[\citelist{\cite{gompf-infiniteexotic}*{Lemma~1.1}\cite{gompf-stipsicz-book}*{Exercise~9.4.23}}]\label{thm:knot-trace-exotic-R4}
Given a knot $K\subseteq S^3$ which is topologically slice but not smoothly slice there exists $\mathcal{R}$, a large exotic $\R^4$, with a smooth embedding $X_0(K)\hookrightarrow \mathcal{R}$. 
\end{construction}
 
\begin{proof}
Let $K$ be a knot which is topologically slice but not smoothly slice. Then there is a locally collared embedding $\varphi\colon X_0(K)\hookrightarrow \R^4$ by the trace embedding lemma. Then the image $\varphi(X_0(K))$ inherits a smooth structure from $X_0(K)$ since it is homeomorphic to the latter. Since $\varphi$ is locally collared, we see that $\R^4\sm \Int{\varphi(X_0(K))}$ is a manifold, and we can check using the Mayer--Vietoris sequence that it is connected. Since $X_0(K)$ is compact and $\R^4$ is not, we also know that $\R^4\sm \Int{\varphi(X_0(K))}$ is noncompact. So by \cref{thm:quinn-smoothing}, we extend the smooth structure on $\varphi(\partial X_0(K))$ to the rest of $\R^4\sm \Int{\varphi(X_0(K))}$. This produces a smooth structure on $\R^4$, and we denote the corresponding smooth $4$-manifold by $\mathcal{R}$. 

Suppose that $\mathcal{R}\diffeo \R^4_\std$. Then by construction we have a smooth embedding $X_0(K)\hookrightarrow \mathcal{R}\diffeo \R^4_\std$, so $K$ is smoothly slice by the trace embedding lemma, which is a contradiction. Therefore $\mathcal{R}$ is an exotic $\R^4$. 

Indeed, since $X_0(K)\subseteq \mathcal{R}$ has no smooth embedding in $\R^4_\std$, we see that $\mathcal{R}$ is large.\footnote{This holds for either definition of `large'.} 
\end{proof}

\subsection{Small exotic \texorpdfstring{$\R^4$s}{R4s} using ribbon disc exteriors}

The construction in this section will use \emph{Casson handles}. These are smooth noncompact $4$-manifolds, constructed by Casson in~\cite{casson-lectures}  as approximations of (open) $2$-handles. The boundary of a Casson handle $C$, called the \emph{attaching region} $\partial C$, is identified with the open solid torus $S^1\times \Int{D^2}$, and from the Kirby diagrams of Casson handles (see, e.g.~\citelist{\cite{kirbybook}*{Chapter~XII}\cite{gompf-stipsicz-book}*{Chapter~6}\cite{F}*{Section~2}}), one observes that every Casson handle $C$ admits a smooth embedding $(C,\partial C)\hookrightarrow (D^2\times D^2,S^1\times \Int{D^2})$, extending the aforementioned identification on the boundary. In~\cite{F}, Freedman showed that any Casson handle $C$ is homeomorphic, relative to its attaching region, to an open $2$-handle, i.e.~$(C,\partial C)\homeo(D^2\times \Int{D^2}, S^1\times \Int{D^2})$, again extending the aforementioned identification on the boundary. 

\begin{construction} Given a smoothly slice link $L$ with $n$ components, and a collection of Casson handles $\{C_1,\dots C_n\}$, there exists a small $\R^4$-homeomorph $\mathcal{R}_L$.
\end{construction}

\begin{proof}
Let $\{\Delta_i\}$ be a collection of smooth slice discs for $L$. Consider the complement $B^4\sm \bigcup \nu\Delta_i$, of open tubular neighbourhoods of the discs. If we glue in $2$-handles along the meridians of the components of $L$, we will get back $B^4$. However, we could instead glue in the Casson handles $\{C_1,\dots C_n\}$ along those meridians.  Let $\mathcal{R}_L$ denote the result of gluing in Casson handles to $B^4\sm \bigcup \nu\Delta_i$ along the meridians of $L$, and then removing all the remaining boundary. Since Casson handles are homeomorphic to open $2$-handles relative to the attaching region we see that $\mathcal{R}_L$ is an $\R^4$-homeomorph. Since every Casson handle embeds in a standard $2$-handle, respecting the attaching region, we also see that $\mathcal{R}_L$ admits a smooth embedding into $\R^4_\std$, and so by definition $\mathcal{R}_L$ is small.
\end{proof}

For certain choices of $L$ and Casson handles $\{C_1,\dots, C_n\}$, it can be shown that $\mathcal{R}_L$ is not diffeomorphic to $\R^4_\std$. For examples of this, see~\cites{demichelis-freedman,bizaca-gompf}. Roughly speaking, showing that such an $\mathcal{R}_L$ is exotic involves embedding it appropriately within a simply connected $h$-cobordism which is known to not be a smooth product. The known examples use ribbon links and are therefore called \emph{ribbon $\R^4$s}. All known small exotic $\R^4$s are ribbon $\R^4$s. The simplest known exotic ribbon $\R^4$ is built using the complement of a standard ribbon disc for the $9_{46}$ knot (also called the $(3,-3,3)$ pretzel knot), and the Casson handle built using self-plumbed $2$-handles within a single, positive self-plumbing at each stage~\cite{bizaca-gompf} -- this explicit construction yields a description of the corresponding exotic $\R^4$ as the interior of an infinite but rather simple handlebody.

\subsection{Universal exotic \texorpdfstring{$\R^4$}{R4}}  
Finally, there is the Freedman--Taylor universal exotic $\R^4$~\cite{freedman-taylor}, denoted by $\mathcal{U}$, which has the remarkable property that every $\R^4$-homeomorph $\mathcal{R}$ admits a smooth embedding $\mathcal{R}\hookrightarrow \mathcal{U}$. The construction of $\mathcal{U}$ also involves solving slicing problems for knots and links (via Kirby diagrams for Casson handles) and applying Quinn's result (\cref{thm:quinn-smoothing}). Unfortunately the construction is beyond the scope of the lectures and we invite the reader to learn more about the construction in~\cite{freedman-taylor} on their own.

\section{The knot concordance groups}\label{sec:conc-gps}

So far we have considered sliceness as a generalisation of triviality for knots. Similarly, there is a generalisation of isotopy, called \emph{concordance}. Specifically, we have the following definition. 

\begin{definition}
Knots $K,J\subseteq S^3$ are said to be smoothly (resp.\ topologically) \emph{concordant} if they cobound a smooth (resp.\ locally flat) embedded annulus in $S^3\times [0,1]$; see~\cref{fig:concordance-schematic}. More specifically, we consider $K\subseteq S^3$ as lying in $S^3\times \{1\}$ and $J\subseteq S^3$ as lying in $S^3\times \{0\}$, and assert the existence of a \emph{concordance} $A= S^1\times [0,1]\hookrightarrow S^3\times [0,1]$, with $S^1\times \{0\}$ mapping to $J$ and $S^1\times \{1\}$ mapping to $K$, where the embedding of $A$ is either smooth or locally flat, as needed.
\end{definition}

\begin{figure}[htb]
	\centering
\begin{tikzpicture}
        \node[anchor=south west,inner sep=0] at (0,0){	\includegraphics[width=8cm]{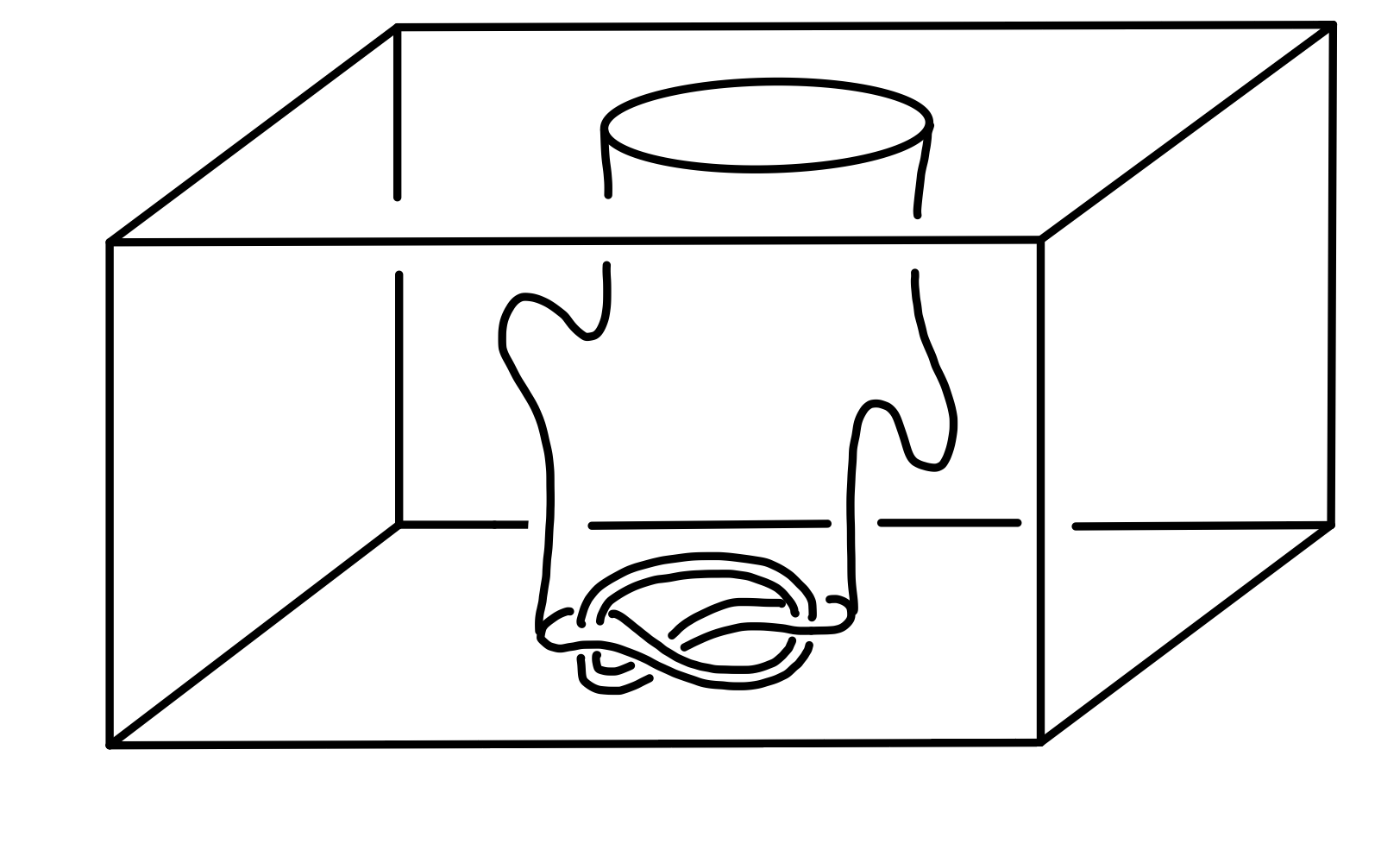}};
		\node at (8.75,3.25) {$S^3\times [0,1]$};
		\node at (5.15, 1.4) {$J$};
		\node at (5.65,4.2) {$K$};
	\end{tikzpicture}
\caption{A schematic picture showing a concordance between two knots $J\subseteq S^3\times \{0\}$ and $K\subseteq S^3\times \{1\}$.}\label{fig:concordance-schematic}
\end{figure} 

\begin{proposition}[\green{Exercise~$\triangle$}~\ref{ex:slice-conc-unknot}]\label{prop:slice-conc-unknot}
A knot $K\subseteq S^3$ is smoothly (resp.\ topologically) slice if and only if it is smoothly (resp.\ topologically) concordant to the unknot.
\end{proposition}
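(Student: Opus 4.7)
The plan is to prove the two implications separately, using one geometric idea in both directions: a slice disc in $B^4$ and a concordance annulus in $S^3\times [0,1]$ differ only by removing or gluing in a standard disc pair $(B^4, D^2)$ at an interior point of the disc.

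For the forward direction, suppose $K$ bounds a slice disc $\Delta\subseteq B^4$, smooth or locally flat as appropriate. Pick an interior point $p\in\Delta$ and a small closed $4$-ball $B\subseteq \Int B^4$ around $p$ that is standardly embedded with respect to $\Delta$, so that $(B,B\cap \Delta)\homeo (D^4,D^2)$; in the smooth case this is furnished by a smooth chart at $p$, and in the topological case by local flatness. Removing $\mathring B$ yields a pair consisting of $B^4\sm \mathring B$, diffeomorphic (resp.\ homeomorphic) to $S^3\times[0,1]$, and an embedded annulus $\Delta\sm \mathring B$ cobounding $K$ and an unknot on $\partial B$. This annulus is the desired concordance.

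For the converse, let $A\subseteq S^3\times[0,1]$ be a concordance from $K\subseteq S^3\times\{1\}$ to the unknot $U\subseteq S^3\times\{0\}$. Identify $B^4$ with $(S^3\times[0,1])\cup_{S^3\times\{0\}} B^4_{\mathrm{aux}}$, smoothing corners in the smooth case. Since $U$ is the unknot, it bounds a standard embedded disc $D$ in $B^4_{\mathrm{aux}}$, smoothly if we wish, and the union $\Delta := A\cup_U D$ is a disc in this $B^4$ with boundary $K$.

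The main obstacle is a regularity check at the gluing circle $U$. In the smooth case, after smoothing corners of $\Delta$ (and the ambient $4$-ball along $S^3\times\{0\}$) in the standard way, we obtain a smooth slice disc for $K$. In the topological case, local flatness of $\Delta$ at each point of $U$ follows from local flatness of $A$ and $D$ combined with the collar structure of $S^3\times\{0\}$ in $B^4$: each point of $U$ has a neighbourhood modelled on $(\R^4_+,\R^2_+)\cup_{(\R^3,\R^1)}(\R^4_-,\R^2_-)=(\R^4,\R^2)$. Both regularity steps are routine, and the geometric content of the statement lies entirely in the cut-and-paste construction above.
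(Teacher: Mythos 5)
Your argument is correct and is the expected solution to this exercise; the paper leaves the proof to the reader (it is labelled as an introductory exercise), and the cut-and-paste construction you describe is the standard one. The only point worth making explicit is that in the topological forward direction, the assertion that $B^4 \sm \mathring B$ is homeomorphic to $S^3 \times [0,1]$ for a locally flat ball $B \subseteq \Int B^4$ relies on the $4$-dimensional topological annulus theorem (Quinn), whereas in the smooth case it follows from Palais's disc theorem; both are standard inputs but deserve a citation.
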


Recall that by our conventions, all the manifolds in the definition of concordance are oriented. Using the standard orientation of $[0,1]$, we notice that $S^3\times \{1\}$ inherits the standard orientation of $S^3$ while $S^3\times \{0\}$ inherits the opposite orientation. Similarly a concordance (i.e.~a copy of $S^1\times [0,1]$) induces opposite orientations on its boundary components. Under these orientations, we see that if $K$ and $J$ are concordant knots, then the claimed annulus joins $K\subseteq S^3\times \{1\}=S^3$ to $rJ\subseteq S^3\times \{0\}=-S^3$. The latter knot is by definition $\ol{rJ}=r\ol{J}$. 

When speaking about slice knots we often ignore these orientation conventions, but they are key to proving the following fact. 

\begin{proposition}[\orange{Exercise~$\square$}~\ref{ex:conc-to-slice}]\label{prop:conc-to-slice}
Two knots $K,J\subseteq S^3$ are smoothly (resp.\ topologically) concordant if and only if $K\# r\ol{J}$ is smoothly (resp.\ topologically) slice.
\end{proposition}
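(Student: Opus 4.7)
The plan is to prove both directions by a standard ``arc-removal'' construction relating $B^4$ with $S^3\times[0,1]$, combined with the orientation bookkeeping set up in the paragraph preceding the proposition.

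The geometric ingredient is the following. Let $\alpha\subseteq S^3\times[0,1]$ be a properly embedded arc isotopic (rel endpoints) to a vertical arc $\{*\}\times[0,1]$, and let $\nu(\alpha)$ be a closed tubular neighbourhood. Then $(S^3\times[0,1])\sm\Int{\nu(\alpha)}\homeo D^3\times[0,1]\homeo B^4$, with boundary $3$-sphere naturally decomposed as $(D^3\times\{0\})\cup(S^2\times[0,1])\cup(D^3\times\{1\})$. This decomposition realises a connected sum: a knot meeting each $D^3\times\{i\}$ in a single arc and meeting the cylinder $S^2\times[0,1]$ in two parallel arcs is precisely the connected sum of the two resulting knots on the two $3$-balls. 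Everything goes through smoothly or locally flatly as appropriate; in the topological case one uses Quinn's tubular neighbourhood theorem for locally flat surfaces, cited earlier.

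For the forward direction, let $A\colon S^1\times[0,1]\hookrightarrow S^3\times[0,1]$ be a concordance from $J$ (at level $0$) to $K$ (at level $1$). Choose a proper arc $\alpha\subseteq A$ with one endpoint on $K\subseteq S^3\times\{1\}$ and the other on $J\subseteq S^3\times\{0\}$, and pick a tubular neighbourhood $\nu(\alpha)\subseteq S^3\times[0,1]$ whose intersection with $A$ is a closed strip along $\alpha$. Removing $\Int{\nu(\alpha)}$ yields a copy of $B^4$ in which $A$ minus the strip is a properly embedded disc. On $\partial B^4=S^3$, the boundary of this disc meets $D^3\times\{1\}$ in $K$ minus a small open arc, meets $D^3\times\{0\}$ in $J$ minus a small open arc, and meets $S^2\times[0,1]$ in the two parallel arcs coming from the sides of the strip. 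By the discussion just above the proposition, the orientation on $\partial(S^3\times[0,1])$ and the induced orientation on $\partial A$ identify the resulting knot as $K\connsum r\ol{J}$, which is therefore slice.

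For the converse, let $\Delta\subseteq B^4$ be a slice disc for $K\connsum r\ol{J}$. Decompose $S^3=\partial B^4$ as $B^3_K\cup(S^2\times[0,1])\cup B^3_{r\ol{J}}$ reflecting the connected-sum structure: $K$ minus an arc lives in $B^3_K$, $r\ol{J}$ minus an arc lives in $B^3_{r\ol{J}}$, and the connecting band contributes two parallel arcs in the cylinder. Identify $B^4$ with $(S^3\times[0,1])\sm\Int{\nu(\alpha)}$ compatibly with this decomposition, glue $\nu(\alpha)\cong D^3\times[0,1]$ back in, and extend $\Delta$ by the core $D^2\times[0,1]$, which pairs up the two parallel arcs in the cylinder. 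The result is a properly embedded annulus in $S^3\times[0,1]$ joining $K$ and $J$, the desired concordance.

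I expect the main obstacle to be the orientation bookkeeping in the forward direction: confirming that the boundary knot of the disc is $K\connsum r\ol{J}$ rather than one of the variants $K\connsum\ol{J}$, $K\connsum rJ$, or $K\connsum J$. This amounts to tracking two sign flips: $S^3\times\{0\}$ carries the opposite orientation from $S^3$ as part of $\partial(S^3\times[0,1])$, so $J\subseteq S^3\times\{0\}$ appears as $\ol{J}$ on the ``oriented'' boundary; and the boundary orientation of $A$ reverses the orientation of its bottom boundary circle, contributing an extra reversal $\ol{J}\mapsto r\ol{J}$. These are precisely the reversals already spelled out in the paragraph preceding the proposition, so no new computation is required, and the arc-removal construction is transparently compatible with orientations on all the pieces it touches.
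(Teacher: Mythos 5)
Your forward direction is exactly the argument suggested by the paper's hint: isotope the concordance so it contains an arc isotopic to a vertical arc, remove an open tubular neighbourhood of it, and observe that the complement is $B^4$ containing a properly embedded disc whose boundary is $K\connsum r\ol{J}$ once the orientation conventions spelled out before the proposition are tracked through. In the locally flat case Quinn's tubular neighbourhood theorem supplies the needed neighbourhoods of the arc and the strip, as you note.

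For the converse, the paper's hint points to a different, more algebraic route: since $K\connsum r\ol{J}$ is slice, it is concordant to the unknot by \cref{prop:slice-conc-unknot}, so $K\connsum r\ol{J}\connsum J$ is concordant to $J$; on the other hand $r\ol{J}\connsum J$ is ribbon by \cref{prop:easy-ribbon}, hence slice, so $K\connsum r\ol{J}\connsum J$ is also concordant to $K$, and transitivity finishes. You instead reverse the forward argument geometrically, identifying $B^4$ with $(S^3\times[0,1])\sm\Int{\nu(\alpha)}$ so as to match the connected-sum decomposition on the boundary and extending $\Delta$ by a band inside $\nu(\alpha)$. This is a perfectly valid alternative and has the merit of making the two directions visibly inverse to each other; the hint's route is shorter given the group machinery but relies on the well-definedness of connected sum on concordance classes, which your argument does not need. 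Two small points of hygiene in your converse: the object you adjoin to $\Delta$ inside $\nu(\alpha)\cong D^3\times[0,1]$ must be a $2$-dimensional band, e.g.\ $D^1\times[0,1]$, not $D^2\times[0,1]$, which would be $3$-dimensional; and the phrase ``identify $B^4$ with $(S^3\times[0,1])\sm\Int{\nu(\alpha)}$ compatibly with this decomposition'' deserves a word of justification, namely that any orientation-preserving diffeomorphism (resp.\ homeomorphism) of $S^3$ extends over $B^4$ by Cerf (resp.\ by the Alexander trick), so the boundary identification can always be adjusted to match the chosen connected-sum sphere.
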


Similar to isotopy, concordance (both smooth and topological) is an equivalence relation. However, unlike isotopy (\orange{Exercise~$\square$}~\ref{ex:iso-not-group}), the equivalence classes under concordance form a group under the connected sum operation.

\begin{proposition}[\green{Exercise~$\triangle$}~\ref{ex:conc-equiv-rel}]\label{prop:conc-equiv-rel}
Smooth (resp.\ topological) concordance is an equivalence relation on knots. 
\end{proposition}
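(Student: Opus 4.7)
The plan is to verify reflexivity, symmetry, and transitivity directly from the definition, with the only mild subtlety being smoothing (or topologically straightening) a concatenation of two concordances.

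For reflexivity, given a knot $K \subseteq S^3$, I take the product annulus $K \times [0,1] \subseteq S^3 \times [0,1]$, which is smoothly (hence also locally flatly) embedded and visibly has the right boundary. For symmetry, given a concordance $A \subseteq S^3 \times [0,1]$ from $J$ to $K$, I pull it back under the self-diffeomorphism $\tau\colon S^3\times[0,1]\to S^3\times[0,1]$ defined by $\tau(x,t)=(x,1-t)$; the image $\tau(A)$ is a smoothly (or locally flatly) embedded annulus with $\tau(A)\cap (S^3\times\{0\})=K$ and $\tau(A)\cap(S^3\times\{1\})=J$, giving a concordance from $K$ to $J$. Here the map $\tau$ preserves the smooth (respectively topological) category, so the regularity of the embedding is preserved.

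For transitivity, suppose $A_1\subseteq S^3\times[0,1]$ is a concordance from $J$ to $K$ and $A_2\subseteq S^3\times[0,1]$ is a concordance from $L$ to $J$. I rescale and stack them by identifying $S^3\times\{1\}\subseteq S^3\times[0,1]$ (the top of $A_2$) with $S^3\times\{0\}\subseteq S^3\times[0,1]$ (the bottom of $A_1$), producing a continuous annulus $A_2\cup_J A_1$ in $S^3\times[0,2]\cong S^3\times[0,1]$ with ends $L$ and $K$. The one thing to check is that the concatenated annulus can be arranged to be smoothly (respectively locally flatly) embedded, not merely continuous along the seam. In the smooth category this uses the collar neighbourhood theorem for smooth submanifolds with boundary: both $A_1$ and $A_2$ may be isotoped so that near the gluing level the concordance is a cylindrical product $J\times [0,\varepsilon)$, and then the concatenation is smooth. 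In the locally flat category the analogous input is the bicollar theorem for locally flat submanifolds of a topological manifold, which lets one make the same cylindrical arrangement near $S^3\times\{1\}$.

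The main (and essentially only) obstacle is this smoothing/straightening at the seam; once the collar (or bicollar) normal form is in place the concatenation is obviously a concordance. Everything else is book-keeping with orientations, which is compatible with the conventions spelled out just before the statement: the annulus in $S^3\times[0,1]$ inherits compatible boundary orientations at the two ends, and the three constructions above all respect these.
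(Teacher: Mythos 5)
Your proof is correct, and since the paper poses this as \green{Exercise~$\triangle$}~\ref{ex:conc-equiv-rel} without supplying an argument, there is no proof in the text to compare against; what you have written is the expected solution. The three steps (product annulus for reflexivity, the flip $\tau(x,t)=(x,1-t)$ for symmetry, and stacking for transitivity) are exactly the standard moves, and you correctly identify the one genuine technical point: making the concatenated annulus smooth (resp.\ locally flat) across the seam. In the smooth category this is the boundary collar theorem for smooth submanifolds with boundary, together with uniqueness of collars, which lets you normalise both $A_1$ and $A_2$ to be products $J\times[0,\varepsilon)$ near the gluing level; in the locally flat category the analogous collar for locally flat proper submanifolds is the right input. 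Two small points worth spelling out if you write this up carefully: for symmetry, besides postcomposing with $\tau$ you should also precompose the parametrisation $S^1\times[0,1]\hookrightarrow S^3\times[0,1]$ with the flip of the $[0,1]$ factor in the domain, so that the boundary circles of the reparametrised annulus again land on the correct knots with the correct orientations; and for transitivity, the normalising isotopy near the seam should be taken rel the far boundary of each annulus so that the knots $K$ and $L$ at the outer ends are not disturbed. Neither affects the validity of your argument.
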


\begin{definition}[Concordance groups]\label{def:conc-gp}
The \emph{smooth concordance group}, denoted by $\C$, consists of smooth concordance classes of knots, under the operation of connected sum. 

Similarly, the \emph{topological concordance group}, denoted by $\C^\top$, consists of topological concordance classes of knots, under the operation of connected sum.
\end{definition}

We often conflate a knot $K$ with its concordance class $[K]$. We asserted but did not prove that concordance classes form a group. This is the combination of the following proposition and \green{Exercise~$\triangle$}~\ref{ex:connsum-commutes}.

\begin{proposition}[\green{Exercise~$\triangle$}~\ref{ex:easy-conc-facts}]\label{prop:easy-conc-facts}
Let $K\subseteq S^3$ be a knot. In either $\C^\diff$ or $\C^\top$, the inverse of $[K]$ is the class of $-K:=r\ol{K}$. The identity element in $\C^\diff$ (resp.\ $\C^\top$) is the class of the unknot, equivalently the class of any smoothly (resp.\ topologically) slice knot. 
\end{proposition}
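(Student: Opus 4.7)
The plan is to verify the group axioms for connected sum on concordance classes using the earlier results in the excerpt, chiefly \cref{prop:slice-conc-unknot}, \cref{prop:conc-to-slice}, and \cref{prop:easy-ribbon}. Throughout, I will work in the smooth category; the topological case is verbatim the same, replacing smooth embeddings with locally flat ones and invoking the topological half of each cited proposition.

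First, I would settle the identity. The unknot $U$ bounds an obvious flat disc in $B^4$, so $U$ is (smoothly and topologically) slice. By \cref{prop:slice-conc-unknot}, every smoothly slice knot is smoothly concordant to $U$, so all slice knots determine the same class in $\C^\diff$; call it $e$. To check that $e$ acts as an identity, I would show that $K \# U$ is ambiently isotopic to $K$ in $S^3$, which follows directly from the definition of the connected sum of pairs (connect-summing with $(S^3,U)$ changes neither the ambient $S^3$ nor the knot type). Ambient isotopy implies concordance by taking the trace of the isotopy, so $[K \# U] = [K]$ in $\C^\diff$.

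Next, I would show that $[-K] = [r\overline{K}]$ is the inverse of $[K]$. By the previous paragraph, it suffices to verify $K \# (-K) = K \# r\overline{K}$ is smoothly slice, because then \cref{prop:slice-conc-unknot} places $K \# (-K)$ in the identity class. But this is exactly the content of \cref{prop:easy-ribbon}: $K \# r\overline{K}$ is ribbon, hence smoothly slice, hence topologically slice as well.

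The one point that should not be swept under the rug is that connected sum is well defined on concordance classes, i.e.~if $K \sim K'$ and $J \sim J'$ via concordances $A$ and $B$ in $S^3 \times [0,1]$, then $K \# J \sim K' \# J'$. I would prove this by forming the boundary connected sum of the two cylinders $S^3 \times [0,1]$ along a product arc disjoint from $A \sqcup B$, obtaining a manifold diffeomorphic to $S^3 \times [0,1]$ again, inside which $A$ and $B$ become a single embedded annulus cobounding $K \# J$ and $K' \# J'$. This, together with the easy facts above, is the main (and only slightly technical) obstacle; everything else reduces to quoting \cref{prop:slice-conc-unknot} and \cref{prop:easy-ribbon}. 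Commutativity of the connected sum on classes is deferred to the cited exercise, completing the group structure.
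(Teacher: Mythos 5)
Your treatment of the identity and the inverse is correct and is the expected route (the paper defers the statement to \green{Exercise~$\triangle$}~\ref{ex:easy-conc-facts} and gives no proof of its own): the unknot bounds a flat disc and is therefore slice; \cref{prop:slice-conc-unknot} then places every slice knot in its class; $K\# U$ is ambiently isotopic to $K$ and the trace of the isotopy is a concordance; and \cref{prop:easy-ribbon} together with \cref{prop:slice-conc-unknot} gives $[K\# r\ol{K}]=[U]$.

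The gap is in the well-definedness argument, precisely the step you flagged as the only technical one. You propose a boundary connected sum of the two cylinders $S^3\times[0,1]$ along a product arc \emph{disjoint from} $A\sqcup B$. But if the arc misses the annuli, the result is the pair $\bigl(S^3\times[0,1],\ A\sqcup B\bigr)$ with $A$ and $B$ still disjoint; the boundary at each end is the split link $K\sqcup J$ (resp.\ $K'\sqcup J'$), not the connected sum, and the two annuli do not merge into one. The arcs must instead lie \emph{on} the concordances: first isotope $A$ so that it contains a vertical arc $\{p\}\times[0,1]$ running from a point of $K'$ to a point of $K$ --- this is exactly the ``why?'' in the hint to \orange{Exercise~$\square$}~\ref{ex:conc-to-slice} --- and likewise for $B$; then take tubular neighbourhoods of those arcs meeting $A$ (resp.\ $B$) in $D^1\times[0,1]$, delete their interiors, and glue the two exteriors along $(S^2,S^0)\times[0,1]$. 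This produces a single annulus in the new $S^3\times[0,1]$ from $K\#J$ to $K'\#J'$. Alternatively, once \cref{prop:conc-to-slice} is available, well-definedness follows formally: $(K\#J)\#\, r\ol{(K'\#J')}$ is isotopic to $(K\#r\ol{K'})\#(J\#r\ol{J'})$, a connected sum of two slice knots, hence slice by \cref{prop:conn-sum-slice}; applying \cref{prop:conc-to-slice} again gives $K\#J$ concordant to $K'\#J'$.
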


By \green{Exercise~$\triangle$}~\ref{ex:connsum-commutes}, the concordance groups are abelian, so it is tempting to speculate that they cannot be particularly complicated. This is somewhat of a red herring. There is an algebraic version of the knot concordance group, called the \emph{algebraic concordance group}, denoted by $\AC$. This group has a couple of equivalent definitions, one in terms of Seifert matrices and another in terms of Alexander modules and Blanchfield forms -- see~\cites{levine-invariants-knotcobordism,levine-knotcobordism,kearton-simple-BAMS,trotter-sequivalence,kearton-simple-TAMS,kearton-cobordism,stoltzfus,trotter-seifertmatrices} for further details. Given the definitions, it is not hard to see that there is a surjection
\[
\Phi\colon \C^\top \twoheadrightarrow \AC,
\]
given by taking a knot to the class of its Seifert matrix, or of its Alexander module and Blanchfield form. It was shown by J.~Levine~\cite{levine-knotcobordism} (see also Stoltzfus~\cite{stoltzfus}) that 
\[
\AC\cong \Z^\infty\oplus \Z/2^\infty \oplus \Z/4^\infty,
\]
so $\C^\top$ is quite large. 

Notice that $\AC$ has many elements of order two. So do $\C^\diff$ and $\C^\top$ by the following proposition.

\begin{proposition}[\green{Exercise~$\triangle$}~\ref{ex:neg-amph}]\label{prop:neg-amph}
A knot $K\subseteq S^3$ is said to be negative amphichiral if it is isotopic to $r\ol{K}$. Every negative amphichiral knot has order at most two in $\C^\diff$ and $\C^\top$. As a result, the figure eight knot has order two in $\C^\diff$ and $\C^\top$.
\end{proposition}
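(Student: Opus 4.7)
The plan is to decompose the statement into three steps: show that negative amphichirality forces $[K] = -[K]$ in the concordance group, verify that the figure eight is negative amphichiral, and then upgrade ``at most two'' to ``exactly two'' for $4_1$ via a sliceness obstruction.

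\textbf{Step 1 (the general statement).} I would combine \cref{prop:easy-conc-facts} with the observation that isotopic knots are concordant (use the product annulus $K \times [0,1] \subseteq S^3 \times [0,1]$, which is both smooth and locally flat). By \cref{prop:easy-conc-facts}, the inverse of $[K]$ in $\C^\diff$ (resp.\ $\C^\top$) is $[r\ol{K}]$. If $K$ is negative amphichiral, then $K \homeo r\ol{K}$, so in either concordance group
\[
[K] \;=\; [r\ol{K}] \;=\; -[K],
\]
which gives $2[K] = 0$. Hence the order of $[K]$ divides $2$.

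\textbf{Step 2 (the figure eight is negative amphichiral).} The introductory remarks before \cref{prop:reverse-mirror-slice} already record that $4_1$ is isotopic to its reverse $r(4_1)$ and to its mirror image $\ol{4_1}$. Composing these isotopies (equivalently, the two involutions $r$ and $\,\ol{\phantom{K}}\,$ commute as operations on knot types) yields an isotopy $4_1 \homeo r\ol{4_1}$. Thus $4_1$ is negative amphichiral, so Step~1 gives that it has order at most two in both $\C^\diff$ and $\C^\top$.

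\textbf{Step 3 (order exactly two).} It remains to show that $4_1$ is not concordant to the unknot, i.e., not topologically slice (which implies not smoothly slice). The cleanest route available within the scope of these notes is to invoke the Arf invariant, a classical $\Z/2$-valued concordance invariant that vanishes on (topologically) slice knots; equivalently, one can use that $\Phi\colon \C^\top \twoheadrightarrow \AC$ from the preceding discussion sends $4_1$ to a nontrivial order-two element coming from its Seifert matrix. A direct Seifert matrix computation for $4_1$ gives $\Arf(4_1) = 1$, so $[4_1] \neq 0$ in $\C^\top$ (and hence also in $\C^\diff$). Combined with Step~2, this forces $[4_1]$ to have order exactly two.

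The only genuinely nontrivial input is Step~3: Steps 1 and 2 are direct unwindings of definitions and of \cref{prop:easy-conc-facts}, but showing that $4_1$ is not slice requires a sliceness obstruction which has only been alluded to in the text (via Levine's $\AC$ and the surjection $\Phi$). If one wished to keep the argument entirely self-contained at this stage of the notes, one could instead cite the nontriviality of the Alexander module / Blanchfield form of $4_1$ as a representative of an order-two class in $\AC$, which by Levine's theorem detects $[4_1] \neq 0$ in $\C^\top$.
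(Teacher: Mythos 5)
Since \cref{prop:neg-amph} is posed as a green exercise, the paper gives no proof; your argument stands on its own and is correct. Your decomposition into three steps is the natural one, and all three are sound.

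One small optimization worth noting for Step 3: the statement as phrased only asks for order \emph{at most} two for general negative amphichiral knots, and strengthening to \emph{exactly} two for $4_1$ does require a sliceness obstruction, as you correctly identify. The Arf invariant works, but an even more elementary route at this stage of the notes is the Fox--Milnor condition on the knot determinant: for a (topologically) slice knot $K$, $\Delta_K(t) \doteq f(t)f(t^{-1})$, so $|\Delta_K(-1)|$ must be a perfect square. Since $\Delta_{4_1}(t) = t^2 - 3t + 1$ gives $|\Delta_{4_1}(-1)| = 5$, which is not a square, $4_1$ is not topologically slice. This uses only the Alexander polynomial, which the paper has already referenced. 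Also, in Step 2 the key point is not really that $r$ and $\ol{\phantom{K}}$ commute, but that both operations are well defined on isotopy classes (an ambient isotopy of $K$ induces one of $rK$ and one of $\ol{K}$); with that in hand, $4_1 \htpyeq \ol{4_1} \htpyeq r\ol{4_1}$ follows immediately.
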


\begin{conjecture}[Open, Gordon~\cite{kirbylist}*{Problem~1.94}]
Let $K\subseteq S^3$ be a knot. If $K\# K$ is slice, then $K$ is smoothly concordant to a negative amphichiral knot.
\end{conjecture}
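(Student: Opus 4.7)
The plan is to separate the easy direction from the hard one, try to reduce the hard direction to the algebraically slice case, and flag where the argument will almost certainly break down; the statement is an open conjecture, so this is a sketch of strategy rather than a route to a genuine proof.

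\emph{Easy direction (consistency check).} If $K$ is smoothly concordant to a negative amphichiral knot $J$, then by \cref{prop:easy-conc-facts} we have $[J]=[r\ol{J}]=-[J]$ in $\C^\diff$, so $2[K]=2[J]=0$, i.e.\ $K\# K$ is smoothly slice. Thus the conjecture predicts that the $2$-torsion of $\C^\diff$ is exactly the set of classes represented by negative amphichiral knots. This is the structural statement I would target.

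\emph{Algebraic reduction.} Let $\Phi\colon \C^\diff\to\AC$ denote the composition of the forgetful map $\C^\diff\to\C^\top$ with Levine's homomorphism. If $K\# K$ is slice then $2\Phi(K)=0$, so $\Phi(K)$ lies in the $2$-torsion of $\AC\cong\Z^\infty\oplus\Z/2^\infty\oplus\Z/4^\infty$. The first step I would take is to confirm, directly from the Witt-theoretic description of $\AC$, that every $2$-torsion class in $\AC$ is realised by a negative amphichiral knot; the figure eight already gives one nontrivial example, and the general case should follow by explicit model constructions. Granting this, one can pick a negative amphichiral $J$ with $\Phi(J)=\Phi(K)$ and replace $K$ by $K\# r\ol{J}$, which still has $2[K\# r\ol{J}]=0$ and now additionally lies in $\ker\Phi$. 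So the conjecture reduces to the case of an algebraically slice knot $K$ with $2[K]=0$ in $\C^\diff$.

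\emph{Attacking the algebraically slice case.} For algebraically slice $K$ with $K\# K$ slice, I would try a geometric construction from a chosen smooth slice disc $D\subseteq B^4$ for $K\# K$. Splitting $B^4$ along a $3$-sphere that separates the two summands turns $D$ into two half-discs meeting along an arc; one can try to interchange the two halves by an orientation-reversing involution of $B^4$ to produce a symmetric pair $(B^4, D')$ whose boundary knot is visibly negative amphichiral and verifiably concordant to $K$. A parallel line of attack is to work equivariantly in the double branched cover $\Sigma_2(K\# K)$ bounded by the branched double cover of $D$, and use involutive symmetries of that cover to manufacture the desired $J$.

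\emph{The main obstacle.} I expect the argument to stall at exactly the last step, for essentially the reason the conjecture is open. The kernel of $\Phi$ inside $\C^\diff$, and even its $2$-torsion, is known to be vast: Casson--Gordon, Cochran--Orr--Teichner, and Heegaard-Floer-theoretic obstructions all detect classes invisible to $\AC$, and none of these obstructions come bundled with a constructive procedure producing a negative amphichiral representative when they vanish. In particular, the cut-and-paste symmetrisation above has no reason to extend across a slice disc whose existence is guaranteed only by a deep $4$-dimensional theorem rather than by an explicit ribbon model, so the involution we want to impose need not exist even topologically. Identifying a new invariant, or a new construction of slice disc complements, that bridges this gap is the heart of the conjecture.
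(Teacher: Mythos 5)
This statement is an \emph{open conjecture} (attributed to Gordon, Kirby's problem list 1.94); the paper does not prove it and explicitly labels it as open. There is therefore no proof in the paper to compare your proposal against, and you are right to present only a strategy sketch rather than claim a proof.

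A few remarks on the sketch itself. Your ``easy direction'' is correct and well stated: if $K$ is concordant to a negative amphichiral $J$, then by \cref{prop:neg-amph} (or the computation $[J]=[r\ol J]=-[J]$) one has $2[K]=0$, so $K\# K$ is slice. The reduction step is where I would push back. You assert that ``every $2$-torsion class in $\AC$ is realised by a negative amphichiral knot'' and that ``the general case should follow by explicit model constructions,'' but this is a nontrivial Witt-theoretic realisation statement that you neither prove nor cite, and it is doing real work: without it the reduction to the algebraically slice case does not go through. Note also that $2\Phi(K)=0$ places $\Phi(K)$ in the full $2$-torsion of $\AC$, which includes the index-two subgroup $2(\Z/4)^\infty$ and not just the $(\Z/2)^\infty$ summand, so the realisation claim has to cover those classes as well. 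If you want to keep this step, you should either supply a concrete construction (e.g.\ Seifert matrices $V$ with $V\cong -V^{T}$ realising prescribed Witt classes) or locate a reference. The composition of the reduction is otherwise sound, since connected sums of negative amphichiral knots are negative amphichiral. Finally, your candid assessment of where the argument must stall — the cut-and-paste symmetrisation of a slice disc for $K\#K$ has no reason to produce an involution, and the kernel of $\Phi$ is exactly where our constructive control evaporates — is accurate and is precisely why this remains open.
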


By contrast, there are no known candidates for knots of order four (note that $\AC$ has many such elements). We have the following open question.

\begin{question}[Open, see~\cite{kirbylist}*{Problem~1.32}]
Does the map $\Phi$ split? In particular, does $\C^\top$ (or $\C^\diff$) have any elements of order four? Indeed, are there any knots with finite order $\neq 0,2$ in $\C^\diff$ or $\C^\top$? 
\end{question}

The kernel of the map $\Phi$ has also been an object of much study, and is the focus of the next section. 

\begin{remark}\label{rem:highdimconcordance}
One may also consider high-dimensional knots, i.e.~embeddings $S^n\hookrightarrow S^{n+2}$ for arbitrary $n$. As in the classical case, one then has corresponding concordance and algebraic concordance groups. Indeed, for even $n$, every embedding $S^n\hookrightarrow S^{n+2}$ is slice, in the smooth, piecewise linear, or smooth categories~\citelist{\cite{kervaire-2knots}*{Th\'{e}or\`{e}me~III.6}\cite{kervaire-odd}*{Theorem~1}}. For odd $n\geq 5$, the topological and piecewise linear concordance groups of such embeddings is precisely in bijection with the corresponding algebraic concordance group (for $n=3$ there is a discrepancy of $\Z/2$ related to Rochlin's theorem)~\cites{levine-knotcobordism,kervaire-odd}. In the smooth category, the odd high-dimensional concordance groups are also closely determined by the corresponding algebraic concordance group, with a discrepancy of $\Z/k$ for some $k$ -- this discrepancy is related to the existence of exotic smooth structures on high-dimensional spheres and the famous Kervaire invariant problem (see~\cite{no-kervaire-one} for an account of the history of this problem). Roughly speaking, the map from knot concordance classes to the algebraic concordance group is still injective, as in the case for topological or piecewise linear concordance in high dimensions, but we no longer get a surjection since the construction in the realisation step may produce an exotic sphere rather than a standard one.
\end{remark}

From the definitions of the concordance groups it should be clear that there is a surjection 
\begin{equation}\label{eq:C-to-Ctop}
\Psi\colon \C^\diff \twoheadrightarrow \C^\top.
\end{equation}
As mentioned in~\cref{sec:defns}, there exist knots that are topologically slice but not smoothly slice. These (or rather, their smooth concordance classes) lie in the kernel of $\Psi$.

\begin{definition}
The subset $\T\subseteq \C^\diff$ consists of the smooth concordance classes of topologically slice knots. 
\end{definition} 

The structure of $\T$ is another key area of research (see e.g.~\cites{gompf-smoothconc,cochran-gompf,En95,hedden-kirk, HeLivRu12, CHH,Hom14,CH15,Hom15,HKL16,OzStipSz14,Hom19,cha-kim-bipolar,DHST21,KL22}). One method of studying this structure is described in the following section. 

\section{Filtrations of the knot concordance groups}\label{sec:filtrations}\label{sec:conc-gps-filtration}

\subsection{Overview}\label{sec:filtrations-overview}
As mentioned in \cref{rem:highdimconcordance}, the high-dimensional codimension two concordance groups of odd-dimensional knots are controlled almost entirely by the corresponding algebraic analogues, and in almost all cases they are isomorphic. Therefore, it was an interesting question for several years whether the knot concordance groups $\C^\top$ and the algebraic knot concordance group $\AC$ are isomorphic also in the classical dimension, in other words, whether the map $\Phi$ is a bijection. Since it is easily shown to be a surjection, the main question was whether $\Phi$ is injective. The first nontrivial elements in the kernel of $\Phi$ were given by Casson and Gordon in~\cites{CG78,CG86}. Then Cochran, Orr, and Teichner~\cite{COT1} defined the \emph{solvable filtration} of $\C^\diff$, denoted by $\{\F_{i}\}_{i\in \tfrac{1}{2}\NN}$, satisfying 
\[
\bigcap_{i\in \tfrac{1}{2}\NN} \F_i\subseteq \cdots \subseteq \F_{n+1}\subseteq \F_{n.5}\subseteq \F_n \subseteq \cdots \subseteq \F_0\subseteq \C^\diff. 
\]

\begin{remark}
Cochran--Orr--Teichner defined the filtration in the locally flat category, for the topological concordance group $\C^\top$. In light of future developments we give the definition in the smooth category. For the relationship between the two notions see~\cref{prop:smoothvtop-solvable,cor:smoothvtop-quotients-isomorphic}.
\end{remark}

We will give the precise definition of the terms of this filtration presently. For now, we note that it is highly nontrivial (see \cref{thm:solvable-filt-nontrivial}), and it subsumes several other invariants, e.g.~a knot $K$ is in $\F_0$ if and only if $\Arf(K)=0$; it is in $\F_{0.5}$ if and only if it is algebraically slice; and if $K$ is in $\F_{1.5}$ then all its Casson--Gordon sliceness obstructions vanish (see~\red{Exercise~$\bigcirc$}~\ref{ex:arf-0solvable}). We will also see in \cref{thm:T-in-solvable-int} that $\T\subseteq \bigcap_{i\in \tfrac{1}{2}\NN} \F_i$.

\begin{question}[Open]
Is $\T = \bigcap_{i\in \tfrac{1}{2}\NN} \F_i$?
\end{question}

On the other hand, as we mentioned in \cref{sec:defns}, we know that $\T\neq \{1\}$. By \cref{thm:T-in-solvable-int} the solvable filtration cannot effectively distinguish between smooth concordance classes of topologically slice knots. For this purpose, Cochran--Harvey--Horn~\cite{CHH} defined the \emph{bipolar filtration} of $\T$, denoted by $\{\T_{n}\}_{n\in \NN}$, satisfying 
\[
\bigcap_{i\in\NN} \T_i\subseteq \cdots \subseteq \T_{n+1}\subseteq \T_n \subseteq \cdots \subseteq \T_0\subseteq \T. 
\]
We will define the bipolar filtration presently, and note here that it is also highly nontrivial (see \cref{thm:bipolar-filt-nontrivial}).

\begin{question}[Open]
Is $\bigcap_{i\in \NN} \T_i=\{1\}$?
\end{question}

\subsection{Definitions of the filtrations}
Both the solvable filtration of Cochran--Orr--Teichner and the bipolar filtration of Cochran--Harvey--Horn are motivated by the following characterisation of topologically slice knots. For a knot $K\subseteq S^3$, the $3$-manifold obtained by performing $0$-framed Dehn surgery on $S^3$ along $K$ is denoted by $S^3_0(K)$. 

\begin{proposition}\label{prop:0surgerychar}
Let $K\subseteq S^3$ be a knot. Then $K$ is topologically slice if and only if $S^3_0(K)=\partial W$, where $W$ is a compact, connected, oriented $4$-manifold such that 
\begin{enumerate}[(i)]
\item\label{item:H1} inclusion induces an isomorphism $\Z\cong H_1(S^3_0(K);\Z)\to H_1(W;\Z)$; 
\item\label{item:H2} $H_2(W;\Z)=0$; and
\item\label{item:pi1} $\pi_1(W)$ is normally generated by the meridian $\mu_K\subseteq S^3_0(K)$.
\end{enumerate} 
\end{proposition}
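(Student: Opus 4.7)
The proof has two directions. In the forward direction I would extract $W$ as the exterior of a topological slice disc; in the backward direction I would cap off the meridian by a 2-handle and invoke Freedman's topological Poincaré theorem to upgrade the resulting contractible manifold to $B^4$.

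\textbf{Forward direction.} Suppose $K$ bounds a locally flat disc $\Delta\subseteq B^4$. By Quinn's normal bundle theorem (cited right after \cref{def:top-slice}), $\Delta$ has a tubular neighbourhood $N\cong \Delta\times D^2$. Set $W:=B^4\sm\Int N$. A direct check using the identifications at the boundary of $N$ shows $\partial W=S^3_0(K)$: the piece $K\times D^2\subseteq \partial N$ replaces $\nu(K)$ from $S^3$ by a solid torus, and the framing inherited from $\Delta$ is precisely the $0$-framing, because $\Delta$ is a disc. The three homological/$\pi_1$ conditions then follow from Mayer--Vietoris and van Kampen applied to $B^4=W\cup N$, intersecting in $\Delta\times S^1\simeq S^1$: since $N$ and $B^4$ are contractible, one gets $H_2(W)=0$, $H_1(W)\cong H_1(S^1)\cong\Z$ generated by $\mu_K$, and $\pi_1(W)$ normally generated by $\mu_K$.

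\textbf{Backward direction.} Given $W$ satisfying (i)--(iii), I would attach a $2$-handle $h\cong D^2\times D^2$ to $W$ along $\mu_K\subseteq S^3_0(K)$, using the surface framing of $\mu_K$ from the torus $\partial\nu(K)\subseteq S^3_0(K)$. Call the result $V$. Kirby calculus (or the inverse surgery description: $0$-surgery on $\mu_K$ inside $S^3_0(K)$ undoes the $0$-surgery on $K$) identifies $\partial V$ with $S^3$, and the boundary of the co-core $\{0\}\times D^2\subseteq h$ with the curve isotopic to $K$ in this $S^3$. The plan is to prove $V\homeo B^4$; the co-core then supplies a topological slice disc for $K$.

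\textbf{The algebraic topology of $V$.} Van Kampen applied to $V=W\cup h$ gives $\pi_1(V)=\pi_1(W)/\langle\langle \mu_K\rangle\rangle$, which is trivial by (iii). Mayer--Vietoris for the same decomposition (with $W\cap h\simeq S^1$) produces
\[
0=H_2(S^1)\to H_2(W)\oplus 0\to H_2(V)\to H_1(S^1)\xrightarrow{\alpha} H_1(W)\to H_1(V)\to 0,
\]
where $\alpha$ sends the generator to $[\mu_K]\in H_1(W)$. By (ii), $H_2(W)=0$, and by (i), $\alpha$ is an isomorphism; therefore $H_2(V)=0$ and $H_1(V)=0$. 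Hence $V$ is a compact, simply connected topological $4$-manifold with trivial integral homology and $\partial V\homeo S^3$, i.e.\ a contractible compact topological $4$-manifold bounded by $S^3$.

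\textbf{Applying Freedman.} By Freedman's topological $4$-dimensional Poincaré theorem (\cite{F}), $V\homeo B^4$. Under this homeomorphism the co-core of $h$ becomes a locally flat embedded disc in $B^4$ whose boundary is (a knot isotopic to) $K$ in $\partial B^4=S^3$, proving that $K$ is topologically slice. The main obstacle is really packaged inside Freedman's theorem; the rest is bookkeeping with Mayer--Vietoris, van Kampen, and the identification $\partial V\homeo S^3$ via surgery cancellation.
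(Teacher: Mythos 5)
Your proof is correct and takes essentially the same approach as the paper: in the forward direction you take the exterior of a Quinn tubular neighbourhood of a topological slice disc, verifying (i)--(iii) with Mayer--Vietoris and van Kampen; in the backward direction you attach a $2$-handle along $\mu_K$ with the surface framing, check via Mayer--Vietoris and van Kampen that the result is a contractible compact topological $4$-manifold with boundary $S^3$, and apply Freedman's topological Poincar\'e theorem, locating the slice disc as the co-core. The paper's proof differs only in presentation, spelling out the gluing and framing identifications explicitly and exhibiting the slice disc as a cross-section of the surgery solid torus, which is a pushoff of your co-core.
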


\begin{proof}
Assume that $K$ is topologically slice, with a topological slice disc $\Delta\subseteq B^4$. As mentioned before, by work of Quinn~\citelist{\cite{quinn-endsiii}*{Theorem~2.5.1}\cite{FQ}*{Theorem~9.3}} we know that $\Delta$ admits an open tubular neighbourhood $\nu\Delta\homeo \Delta\times \mathring{D}^2$. Then let $W= B^4\sm \nu\Delta$. The boundary of $W$ is a union of the knot exterior $S^3\sm \nu K$ and $\Delta\times S^1$, a solid torus. This shows that $\partial W$ is the result of some Dehn surgery on $S^3$ along $K$. In order to see that we indeed get $S^3_0(K)$ as the boundary, note that the disc $\Delta\times \mathrm{pt}\subseteq \Delta\times S^1$ is attached to a pushoff of $K$ which has zero linking number with $K$, since in particular it bounds a disc in $B^4$ disjoint from $\Delta\times \{0\}\subseteq \nu\Delta$.

Then \ref{item:pi1} follows from the Seifert--van Kampen theorem since we obtain $B^4$ (which is simply connected) from $W$ by gluing in a thickened $2$-cell (i.e.~the neighbourhood $\nu\Delta$) to $W$ along $\mu_K\subseteq \partial W$. 

To see \ref{item:H2}, consider the Mayer--Vietoris sequence for $B^4$ as the union $W\cup \nu \Delta=B^4$. Notice that $W\cap \nu \Delta\cong D^2\times S^1$. Then we have 
\[
H_2(D^2\times S^1;\Z)\to H_2(W;\Z)\oplus H_2(\nu \Delta;\Z)\to H_2(B^4;\Z),
\]
so we see that $H_2(W;\Z)=0$. From the same sequence, we also get that $H^2(W;\Z)\cong H^3(W;\Z)=0$. Therefore, $H_1(W,\partial W;\Z)\cong H_2(W,\partial W;\Z)=0$. Apply these to the long exact sequence for the pair $(W,\partial W)$, where we know that $\partial W\cong S^3_0(K)$:
\[
H_2(W,S^3_0(K);\Z)\to H_1(S^3_0(K);\Z)\to H_1(W;\Z)\to H_1(W,S^3_0(K);\Z)
\]
This yields \ref{item:H1} and completes half of the proof. 

For the converse direction, given $W$, we will glue in a $2$-handle along $\mu_K\subseteq \partial W$, with a framing we describe presently, so that the result $\B$ is homeomorphic to $B^4$, within which we will locate a slice disc for the knot $K\subseteq \partial \B$.  To do so, we will need to choose a framing of $\mu_K$ with the goal of recovering $S^3$ as $\partial \B$.  We do this explicitly next. Experts can safely skip this level of detail, and instead work directly in a Dehn surgery diagram. We provide an explicit argument for the benefit of newcomers to the field.
 
This construction will use multiple copies of the $2$-disc $D^2$, which we indicate with subscripts, i.e. each $D^2_i$ below indicates a copy of $D^2$. Use the Seifert longitude of $K$ to identify a tubular neighbourhood $\nu K\subseteq S^3$ with $K\times D^2_0$, so the Seifert longitude is given by $K\times x$ for some $x\in \partial D^2_0$. By definition, we know that 
\[
S^3_0(K)= (S^3\sm (K\times \mathring{D_0^2})) \cup_{\partial D^2_1\times S^1} (D^2_1\times S^1),
\]
with the \emph{surgery solid torus} $D^2_1\times S^1$ attached to $K\times \partial D^2_0$ so that $\partial D^2_1\times y$ for some $y\in S^1$ is identified with $K\times x$ and $z\times S^1$ for $z\in\partial D^2_1$ is identified with the meridian $\mu_K$. Attach a $2$-handle $D^2_1\times D^2_2$ to $W$ along the core of the surgery solid torus, i.e.~along $0\times S^1$, using the surgery solid torus as the requisite framing of a tubular neighbourhood of the core, and call the result $\B$. In other words we have: 
\[
\B:= W\cup_{D^2_1\times S^1} D^2_1\times D^2_2,
\]
Note that the core $0\times S^1\subseteq D^2_1\times S^1$ is isotopic in $S^3_0(K)$ to $z\times S^1$, which we saw is identified with $\mu_K$, so we are indeed attaching a $2$-handle to $W$ along $\mu_K$ as claimed. 

Then $\partial \B$ is the result of surgery on $S^3_0(K)$ along $\mu(K)$. More precisely, we have
\begin{align*}
\partial \B=& \bigl(S^3_0(K)\sm (D^2_1\times S^1)\bigr) \cup_{S^1\times \partial D^2_2} (S^1\times D^2_2)\\
=&\biggl(\Bigl(\bigl(S^3\sm (K\times \mathring{D_0^2})\bigr)\cup_{\partial D^2_1\times S^1} (D^2_1\times S^1)\Bigr) \sm (D^2_1\times S^1)\biggr) \cup_{S^1\times \partial D^2_2} (S^1\times D^2_2)\\
=&\bigl(S^3\sm (K\times D^2_0)\bigr) \cup_{S^1\times \partial D^2_2} (S^1\times D^2_2),
\end{align*}
where we can check that $w\times \partial D^2_2$ for $w\in S^1$ is attached to the meridian $\mu_K$. In other words, we have performed the $\infty$-framed Dehn surgery on $S^3$ along $K$, producing $\partial \B=S^3$. In this copy of $S^3$, we still have the knot $K$, represented as $\partial D^2_1 \times y$, so in particular the disc $D^2_1\subseteq \B$ is a slice disc for $K$. 

Moreover, by our hypotheses, we also see that $\B$ is homotopy equivalent to $B^4$. Then by the topological $4$-dimensional Poincar\'e conjecture~\cite{F}*{Theorem~1.6}, we know that $\mathcal{B}\homeo B^4$. This completes the proof. 
\end{proof}

We now give the definition of the solvable filtration. Recall that for a group $G$,  the \emph{$i$th derived subgroup}, denoted $G^{(i)}$, is inductively defined by setting $G^{(0)}=G$ and $G^{(i+1)}:=[G^{(i)}, G^{(i)}]$ for each $i\geq 0$.

\begin{definition}[\cite{COT1}*{Definition~1.2}]\label{def:solvable-filtration}
Let $K\subseteq S^3$ be a knot and $n\in \NN$. We say that $K$ is (smoothly) \emph{$n$-solvable} if $S^3_0(K)=\partial W$ where $W$ is a smooth, compact, connected, oriented $4$-manifold such that 
\begin{enumerate}[(i)]
\item the inclusion induces an isomorphism $\Z\cong H_1(S^3_0(K);\Z)\to H_1(W;\Z)$; and
\item $H_2(W;\Z)$ has a basis consisting of smoothly embedded, closed, connected, oriented surfaces $\{L_i,D_i\}_{i=1}^k$, for some $k$, such that 
\begin{enumerate}
\item each $L_i$ and $D_i$ has trivial normal bundle, and 
\item the surfaces $\{L_i,D_i\}_i$ are pairwise disjoint, except that for each $i$, the surface $L_i$ intersects $D_i$ transversely once with positive sign; and
\item for each $i$, we have $\pi_1(L_i)\subseteq \pi_1(W)^{(n)}$ and $\pi_1(D_i)\subseteq \pi_1(W)^{(n)}$, with respect to the inclusion induced maps. 
\end{enumerate}
\end{enumerate}
The set of $n$-solvable knots is denoted by $\F_n$ and the manifold $W$ is called an \emph{$n$-solution}.

We say that $K$ is (smoothly) \emph{$n.5$-solvable} if, in addition, for each $i$, we have  $\pi_1(D_i)\subseteq \pi_1(W)^{(n+1)}$ with respect to the inclusion induced maps. The set of $n.5$-solvable knots is denoted by $\F_{n.5}$ and the manifold $W$ is called an \emph{$n.5$-solution}.
\end{definition}

We leave it to the reader to show that, for each $n\in\tfrac{1}{2}\NN$, the set $\F_n$ is a subgroup of $\C^\diff$ (\green{Exercise~$\triangle$}~\ref{ex:solvable-subgroup}). Note that condition (i) of \cref{def:solvable-filtration} and of \cref{prop:0surgerychar} are the same. Condition (ii) in \cref{def:solvable-filtration} can be seen as a generalisation of condition (ii) in \cref{prop:0surgerychar} -- roughly speaking, if we assume that each surface $L_i$ is a sphere, then one could do surgery on $W$ along $\{L_i\}$, i.e.~remove a tubular neighbourhood $L_i\times D^2$ and glue in $S^1\times D^3$, for each $i$, and the resulting $4$-manifold with boundary $S^3_0(K)$ would have trivial second homology. The condition that $\pi_1(L_i)\subseteq \pi_1(W)^{(n)}$ then measures how far away the homology class of $L_i$ is from being represented by an immersed sphere: in the case that $\pi_1(L_i)$ is mapped to the trivial subgroup of $\pi_1(W)$, one could perform ambient surgery along half a symplectic basis of curves on $L_i$ to transform it to an immersed sphere.

\begin{remark}
We could also add in Condition (iii) from \cref{prop:0surgerychar} to \cref{def:solvable-filtration}, without any loss to known results about the solvable filtration. \textit{A priori} (but only conjecturally) that would lead to a different filtration.  
\end{remark}

In \cref{def:solvable-filtration}, if the manifold $W$ is not required to be smooth and the surfaces $\{L_i,D_i\}_i$ are only required to be locally flat embedded, we say that $K$ is \emph{topologically} $n$-solvable or $n.5$-solvable. Denote the set of such knots by $\F^\top_n$ and $\F^\top_{n.5}$ respectively. \textit{A priori} the filtrations $\{\F^\top_n\}_{n\in\frac{1}{2}\NN}$ and $\{\F_n\}_{n\in\frac{1}{2}\NN}$ appear to be distinct, but indeed, they coincide, as mentioned in~\citelist{\cite{CHL1}*{p.\ 1423}\cite{CHH}*{p.\ 2105}}. Since an explicit proof has not appeared in the literature before, we provide one next. 

\begin{proposition}\label{prop:smoothvtop-solvable}
Let $K\subseteq S^3$ be a knot and $n\in \tfrac{1}{2}\NN$. Then $K$ is smoothly $n$-solvable if and only if it is topologically $n$-solvable. 
\end{proposition}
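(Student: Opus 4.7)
The forward direction is immediate: every smooth $4$-manifold carries a canonical topological structure and every smoothly embedded surface is locally flat, so a smooth $n$-solution (resp.\ $n.5$-solution) is in particular a topological one. For the converse, let $W$ be a topological $n$-solution (or $n.5$-solution) for $K$ with locally flat surfaces $\{L_i, D_i\}$. The plan is to first build a smooth compact $4$-manifold $\tilde W$ homeomorphic to $W$ with smooth boundary $S^3_0(K)$, and then replace the locally flat surfaces by smoothly embedded ones inside $\tilde W$.

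For the first stage, I pick a point $p \in \Int W$ disjoint from all the surfaces and set $M := W \setminus \{p\}$. Since $M$ is connected and noncompact, \cref{thm:quinn-smoothing} yields a smooth structure on $M$ extending the given smooth structure on $\partial W = S^3_0(K)$. Choose a proper smooth function $f\colon M \to [0, \infty)$ vanishing on $\partial W$ and tending to $\infty$ at the end near $p$, and pick a regular value $c$ so large that $N := f^{-1}([0, c])$ contains every $L_i$ and $D_i$. Then $N$ is smooth and compact with $\partial N = S^3_0(K) \sqcup Y$, where $Y := f^{-1}(c)$. The complementary region $f^{-1}([c, \infty))$ is a neighborhood of the end, topologically a punctured $4$-ball, so its boundary $Y$ is topologically $S^3$, hence smoothly $S^3$ by the $3$-dimensional Poincar\'e conjecture. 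Capping $Y$ with a smooth $B^4$ produces $\tilde W := N \cup_Y B^4$, a smooth compact $4$-manifold with smooth boundary $S^3_0(K)$; by the Alexander trick $\tilde W \approx W$, since we have merely traded a topological $B^4$-neighborhood of $p$ in $W$ for a smooth $B^4$.

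Inside $\tilde W$ the locally flat surfaces $L_i, D_i$ still live. Each admits a topological tubular neighborhood by \cite{FQ}*{Theorem~9.3}, homeomorphic to the trivial $D^2$-bundle since the normal bundle is trivial. Disk bundles over surfaces are classified by their Euler number equivalently in the topological and smooth categories, so within each tubular neighborhood one can replace the surface by a smoothly embedded one, topologically ambient isotopic to the original. Performing these moves inside pairwise-disjoint tubular neighborhoods, and using topological transversality at the prescribed intersection points $L_i \cap D_i$, produces smoothly embedded $\{L_i', D_i'\}$ with the same intersection data. Topological ambient isotopy preserves homology classes and images in $\pi_1(\tilde W) \cong \pi_1(W)$, so the derived-subgroup conditions $\pi_1(L_i') \subseteq \pi_1(\tilde W)^{(n)}$ and $\pi_1(D_i') \subseteq \pi_1(\tilde W)^{(n)}$ (or $\pi_1(\tilde W)^{(n+1)}$ in the $n.5$-solvable case) transfer directly, and $\tilde W$ becomes a smooth $n$-solution (resp.\ $n.5$-solution) for $K$.

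The main obstacle I anticipate is the surface-smoothing stage: weaving together the separate tubular-neighborhood smoothings while simultaneously preserving the symplectic intersection structure on $H_2(\tilde W)$. Each individual ingredient---existence of tubular neighborhoods, Euler-number classification of disk bundles, topological transversality---is standard Freedman--Quinn theory, but arranging matters so that the smooth surfaces remain pairwise disjoint away from the prescribed intersections and so that those intersections remain single, transverse, and positively oriented requires careful bookkeeping near each crossing.
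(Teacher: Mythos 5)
Your first stage, constructing a smooth manifold $\tilde W$ homeomorphic to $W$, has a gap that cannot be repaired. After Quinn-smoothing $M=W\setminus\{p\}$ and cutting at a regular level $c$, you assert that $Y=f^{-1}(c)$ is topologically $S^3$ because $f^{-1}([c,\infty))$ is ``topologically a punctured $4$-ball.'' This is not automatic, and it cannot hold in general: the argument uses nothing about $n$-solvability, so if it worked it would smooth \emph{every} compact topological $4$-manifold with smooth boundary, in particular the $E_8$-manifold with an open ball removed, which is not smoothable. The obstruction is the Kirby--Siebenmann invariant. When $\ks(W)\neq 0$, the Quinn-smoothed end of $W\setminus\{p\}$ is an exotic end, homeomorphic to $S^3\times\R$ but containing no smoothly embedded $S^3$ separating off the end, so no level set $Y$ is $S^3$ and the cap-off is impossible. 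The paper's proof addresses exactly this: it first shows $\ks(W)=0$ using that the $n$-solvability conditions force $W$ to be spin with signature zero (gluing on the $0$-trace of $K$ to apply \cite{FQ}*{Proposition~10.2B} without computing a Rochlin invariant), and only then smooths $W\# r(S^2\times S^2)$ via the stabilization theorem \cite{FQ}*{Theorem~8.6}; the extra homology introduced by stabilizing is carried by smooth spheres with the correct intersection pairing, so it is harmless.

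The surface-smoothing stage also claims more than the available tools provide. You propose replacing each locally flat $L_i$ by a smooth surface that is \emph{topologically ambient isotopic} to the original, working inside its tubular neighbourhood and citing the classification of $D^2$-bundles. No theorem of Freedman--Quinn delivers this; a locally flat surface in a smooth $4$-manifold is not in general topologically isotopic to a smooth one of the same genus. What \cite{FQ}*{Theorem~8.1A} actually gives, after ambient isotopy and finger moves, is a smooth \emph{immersion} whose extra double points are paired by framed embedded Whitney discs. One then tubes along Whitney arcs to recover an embedding at the cost of increasing genus, and must check that the new generators of $\pi_1$ of the tubed surface still map into $\pi_1(W)^{(n)}$ (resp.\ $\pi_1(W)^{(n+1)}$). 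The paper does exactly this bookkeeping, observing that the two new loops introduced by each tube are null-homotopic in $W$ (one bounds a meridional disc, the other the Whitney disc), so the image of $\pi_1$ is unchanged. Your proposal omits both the finger-move step and the $\pi_1$ verification, which are the substantive content of the surface-smoothing argument.
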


\begin{proof}
By forgetting smoothness, we see that if $K\in \F_n$ then $K\in \F^\top_n$. 

Assume that $K\in \F^\top_n$ and let $W$ denote a topological $n$-solution. First we note that the Kirby--Siebenmann invariant $\ks(W)=0$. To see this, observe that $W$ is spin, since $H_1(W;\Z)\cong \Z$ has no $2$-torsion and the intersection form is even. The Kirby--Siebenmann invariant of a compact, oriented, spin topological $4$-manifold can be computed using the signature and the Rochlin invariant of the boundary with respect to the induced spin structure~\cite{FQ}*{Proposition~10.2B}. To avoid the Rochlin invariant and having to specify spin structures, we instead glue the $0$-trace of $K$, denoted by $X_K$, to $W$, along the common boundary $S^3_0(K)$; call the result $Y$. Since $X_K$ is smooth, we know that $\ks(Y)=\ks(W)$. Using the Mayer--Vietoris sequence, 
\[
\begin{tikzcd}
H_1(S^3_0(K);\Z)\arrow[r, "\cong"]	&H_1(X_K;\Z)\oplus H_1(W;\Z)\arrow[r, two heads]	&H_1(Y;\Z)\arrow[r]	&0,
\end{tikzcd}
\]
where we used that $H_1(X_K;\Z)=0$, we see that $H_1(Y;\Z)=0$. Using the Mayer--Vietoris sequence again, noting that $H_3(Y;\Z)\cong H^1(Y;\Z)=0$, and that the inclusion induced map $H_2(S^3_0(K);\Z)\to H_2(X_K;\Z)$ is an isomorphism while $H_2(S^3_0(K);\Z)\to H_2(W;\Z)$ is trivial, we can further conclude that $H_2(Y;\Z)\cong H_2(W;\Z)$ and indeed that the intersection forms on $Y$ and on $W$ coincide. So we have that $Y$ is spin, since it has even intersection form and trivial first homology. By the definition of an $n$-solution, the signature $\sigma(Y)=\sigma(W)=0$. For this we can either use that the intersection form of $Y$ agrees with that of $W$, or use Novikov additivity. Putting everything together, we see that $\ks(W)=\ks(Y)=0$ by~\cite{FQ}*{Proposition~10.2B}. Therefore, the connected sum of $W$ with sufficiently many copies of $S^2\times S^2$ is smoothable~\cite{FQ}*{Theorem~8.6} (see also~\citelist{\cite{4dguide}*{Theorem~8.6}\cite{DETbook-flowchart}*{Section~21.4.5}}). Choose $r\geq 0$ so that $W'=W\# rS^2\times S^2$ is smoothable. Choose a smooth structure on $W'$. We will show that $W'$ is a (smooth) $n$-solution. 

As required, the manifold $W'$ is smooth, compact, connected and oriented. We also know that inclusion induces an isomorphism $H_1(W;\Z)\cong H_1(W';\Z)$. So it remains only to check the conditions on the second homology. Let $\{L_i,D_i\}_{i=1}^k$ denote a basis for $H_2(W;\Z)$ provided by the definition of an $n$-solution. By requiring that the connected sum operation is performed away from $\{L_i,D_i\}_i$, we may assume that each $L_i$ and $D_i$ also lies in $W'$. Let $\{A_i,B_i\}_{i=1}^r$ denote the standard basis for $H_2(rS^2\times S^2;\Z)$ given by the $S^2$-factors, oriented so that each $A_i$ intersects the corresponding $B_i$ with positive sign. Of course each $A_i$ and $B_i$ has trivial normal bundle. We also know that $\pi_1(W')\cong \pi_1(W)$. Therefore, the set $\{L_i,D_i\}_{i=1}^k\cup \{A_i,B_i\}_{i=1}^r$ is a basis for $H_2(W';\Z)$ satisfying all the conditions of \cref{def:solvable-filtration}, except that the elements of $\{L_i,D_i\}_i$ may not be smoothly embedded in $W'$. 

\begin{figure}[htb]
	\centering
\begin{tikzpicture}
        \node[anchor=south west,inner sep=0] at (0,0){	\includegraphics[width=15cm]{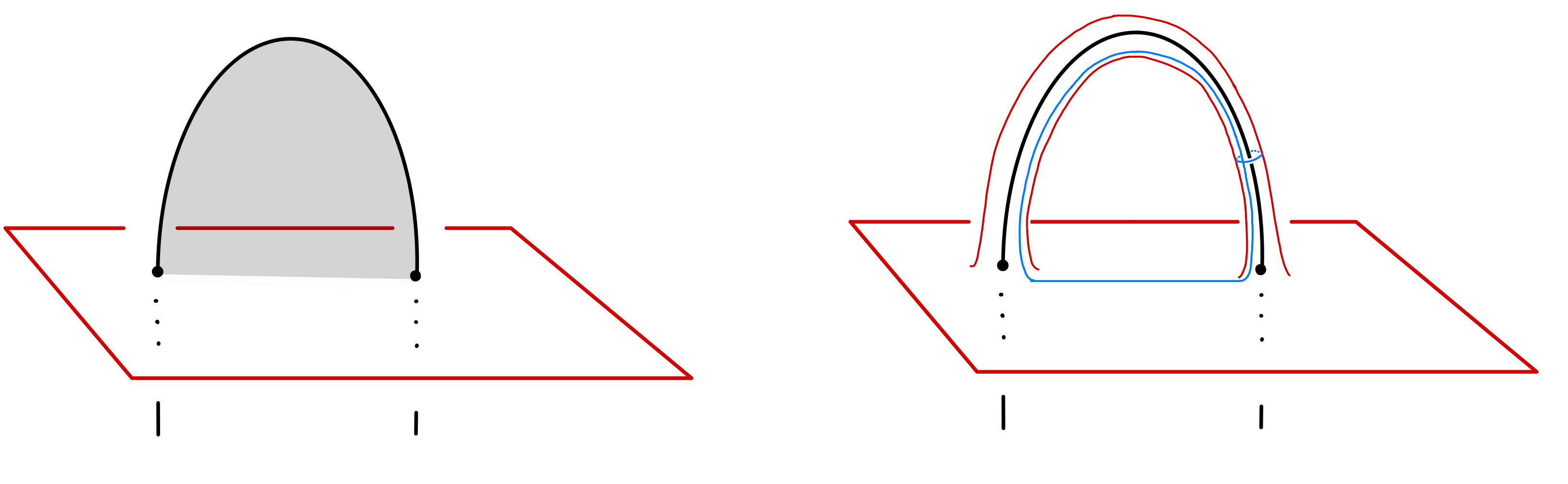}};
		\node at (5.8,2) {$L_i$};
		\node at (4,3.5) {$L_i$};
		\node at (14,2) {$L_i'$};
		\node at (12.35,3.2) {$\mu$};
		\node at (11,1.7) {$\lambda$};				
	\end{tikzpicture}
\caption{Left: The result of performing a finger move on $L_i$. The Whitney disc is shaded in grey. Right: The surface $L_i'$ is produced after tubing along a Whitney arc. The blue curves show the new generators of $\pi_1(L_i')$, denoted by $\mu$ and $\lambda$, compared to $\pi_1(L_i)$.} \label{fig:whitneyarc-tube}
\end{figure} 

In general, it may not be possible to change $\{L_i,D_i\}_i$ by a smooth ambient isotopy so that the result consists of smooth embeddings. However, by~\cite{FQ}*{Theorem~8.1A} (see also~\cite{FQ}*{p.~115}), we may modify each $L_i$ and $D_i$ by ambient isotopies and finger moves until the resulting immersions are smooth. The new intersections are paired by framed, embedded Whitney discs. Tube each $L_i$ (or $D_i$) to itself, along one of each pair of Whitney arcs, as shown in~\cref{fig:whitneyarc-tube}; call the resulting surfaces $\{L_i',D_i'\}_i$. Then by construction each $L'_i$ and $D'_i$ is smoothly embedded in $W'$ as needed. We also note that the intersections are as before, since the modification has occurred along arcs which can be assumed to have interiors disjoint from $\{L_i,D_i\}$ and one another. Of course, the genera of the surfaces have increased so we have to ensure that the new elements of the fundamental group still map to $\pi_1(W)^{(n)}$ or $\pi_1(W)^{(n+1)}$, as appropriate. We consider the case of a surface $L_i'$ -- the case of a $D_i'$ is similar. As shown in~\cref{fig:whitneyarc-tube}, for each finger move, there are two new generators of the $\pi_1(L_i')$ compared to $\pi_1(L_i)$, given by the curves $\mu$ and $\lambda$. However, both $\mu$ and $\lambda$ are null-homotopic in $W$, the latter due to the existence of the Whitney disc and the former due to a meridional disc for $L_i'$. Therefore, the images of $\pi_1(L_i)$ and $\pi_1(L_i')$ in $\pi_1(W)$ under the inclusion induced maps coincide. This completes the proof that $W'$ is a smooth $n$-solution. 
\end{proof}

Consider the natural map $\Psi\colon\F_n\to\F^\top_n$, where for each knot $K$ the smooth concordance class of $K$ is sent to the topological concordance class, i.e.~induced by the map $\Psi$ in~\eqref{eq:C-to-Ctop}. Then \cref{prop:smoothvtop-solvable} has the following straightforward corollary.

\begin{corollary}\label{cor:smoothvtop-quotients-isomorphic}
For each $n\in\tfrac{1}{2}\NN$, the map $\Psi$ induces an isomorphism $\F_n/\F_{n+0.5}\cong \F_n^\top/\F^\top_{n+0.5}$.
\end{corollary}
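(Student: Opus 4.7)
The plan is to leverage \cref{prop:smoothvtop-solvable} at the level of concordance classes. That proposition identifies the sets of $m$-solvable knots in the smooth and topological categories for every $m\in\tfrac12\NN$, so all that remains is to reconcile the two (a priori different) equivalence relations used to form $\F_m$ and $\F^\top_m$. All the serious $4$-manifold topology has already been absorbed there, and the corollary should reduce to a short diagram chase.

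First I will check that $\Psi$ sends $\F_m$ into $\F^\top_m$ for each $m\in\tfrac12\NN$, tautologically: a smooth $m$-solution is in particular a topological $m$-solution, and a smooth concordance is in particular a topological concordance. Applied with $m=n$ and $m=n+0.5$, this produces a well-defined group homomorphism $\bar\Psi\colon \F_n/\F_{n+0.5}\to \F^\top_n/\F^\top_{n+0.5}$. Here I will use that each $\F_m$ and $\F^\top_m$ is a subgroup of the respective concordance group (\green{Exercise~$\triangle$}~\ref{ex:solvable-subgroup}); in particular $m$-solvability descends to concordance classes in either category, so one may speak of \emph{a knot} being $m$-solvable without ambiguity.

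Surjectivity of $\bar\Psi$ is then immediate: given a class in $\F^\top_n$ with representative $K$, the knot $K$ is topologically $n$-solvable, hence smoothly $n$-solvable by \cref{prop:smoothvtop-solvable}, and $[K]_\diff\in\F_n$ maps to $[K]_\top$ under $\bar\Psi$. For injectivity I will start with $[K]_\diff\in\F_n$ satisfying $\bar\Psi([K]_\diff)=0$, which means that the topological concordance class of $K$ lies in $\F^\top_{n+0.5}$. Because $\F^\top_{n+0.5}$ is closed under topological concordance, $K$ itself is topologically $(n+0.5)$-solvable, and then \cref{prop:smoothvtop-solvable} promotes this to smooth $(n+0.5)$-solvability, giving $[K]_\diff\in\F_{n+0.5}$ as required.

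The only genuine obstacle is packaged inside \cref{prop:smoothvtop-solvable} itself — namely the conversion of a topological $n$-solution into a smooth one, which in the proof above uses vanishing of the Kirby--Siebenmann invariant, stabilisation by $S^2\times S^2$ summands, and the Freedman--Quinn procedure for smoothing immersed surfaces via finger moves and tubing along Whitney arcs. Once that proposition is in hand, the corollary has no further content.
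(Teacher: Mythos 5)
Your proof is correct and matches the paper's intent: the paper calls this a ``straightforward corollary'' of \cref{prop:smoothvtop-solvable} and gives no further argument, and your diagram chase (well-definedness of $\bar\Psi$ from $\Psi(\F_m)\subseteq\F^\top_m$; surjectivity from the topological-to-smooth direction of the proposition at level $n$; injectivity from the same direction at level $n+0.5$, using that $\F^\top_{n+0.5}$ is concordance-invariant) is exactly the routine verification that is being left implicit.
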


We now state the results mentioned in \cref{sec:filtrations-overview}, as well as a few more open questions.

\begin{theorem}[\cites{COT1,COT2,CT07,CHL1,CHL2}]\label{thm:solvable-filt-nontrivial}
For each $n\in \NN$, there is a subgroup of $\F_n/\F_{n.5}$ isomorphic to $\Z^\infty\oplus(\Z/2)^\infty$. 
\end{theorem}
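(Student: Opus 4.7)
The plan is to exhibit infinitely many $\Z$- and $\Z/2$-summands in $\F_n/\F_{n.5}$ by constructing knots via iterated infection and obstructing their $n.5$-solvability with higher-order von Neumann $\rho$-invariants.

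First I would fix a ``seed'' ribbon knot $R$ whose 0-surgery $M_R$ carries $n$ unknotted curves $\eta_1,\dots,\eta_m$ lying in $\pi_1(M_R)^{(n)}$ and forming a basis (or partial basis) of the $n$th-order Alexander module over the appropriate skew-Laurent ring. Given a sequence of auxiliary ``infection'' knots $J_1,\dots,J_m$, let $K=R(\eta_1,\dots,\eta_m;J_1,\dots,J_m)$ be the knot obtained by cutting open a tubular neighborhood of each $\eta_i$ and gluing in the exterior of $J_i$ (a satellite construction, as in \cref{def:satellite}). A standard calculation shows that $S^3_0(K)$ cobounds, with $S^3_0(R)$ and the $S^3_0(J_i)$'s, a cobordism built from curve-by-curve surgery; splicing this to an $n$-solution for $R$ (which exists because $R$ is ribbon and hence lies in every $\F_k$) and to $0$-traces of the $J_i$ produces an $n$-solution for $K$. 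Thus $K\in\F_n$ regardless of the choice of $J_i$.

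Next I would show that if $K\in\F_{n.5}$ with $n.5$-solution $W$, then the $L^2$-signature defect $\rho(M_K,\phi)$ associated to any PTFA coefficient system $\phi\colon\pi_1(M_K)\to\Gamma$ extending over $\pi_1(W)$ vanishes. This is the Cochran--Orr--Teichner $L^2$-signature obstruction theorem, obtained by applying $L^2$-signatures to the intersection form of $W$ and using that the half-rank Lagrangian guaranteed by the $n.5$-solution forces the $L^2$-signature to be zero. Then, via the commutator-series analysis of how $\pi_1(M_K)$ sits inside $\pi_1(W)$ (the key technical input being the non-triviality of the classical Blanchfield form and an inductive argument on the derived series), one deduces that the coefficient system $\phi$ restricted to each infection meridian factors through the abelianization $\Z$. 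It follows that
\[
\rho(M_K,\phi)\;=\;\rho(M_R,\phi_R)\;+\;\sum_{i=1}^{m}\rho_0(J_i),
\]
where $\rho_0(J_i)=\int_{S^1}\sigma_{J_i}(\omega)\,d\omega$ is the integral of the Levine--Tristram signature function of $J_i$. Since $R$ is ribbon, $\rho(M_R,\phi_R)=0$, so $n.5$-solvability of $K$ forces $\sum\rho_0(J_i)=0$.

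Finally I would build the two claimed infinite-rank summands. For the $\Z^\infty$ part, pick a sequence $J^{(k)}$ of knots (for instance suitable iterated cables or connected sums of torus knots) whose integrals $\rho_0(J^{(k)})$ are $\Q$-linearly independent, and form the family $K_k=R(\eta_1;J^{(k)})$; linear independence of the $\rho_0$'s and the additivity formula above, together with the fact that $\rho_0$ is a concordance invariant, shows the $K_k$'s span a $\Z^\infty$ in $\F_n/\F_{n.5}$. For the $(\Z/2)^\infty$ part, replace the $J^{(k)}$ with negative amphichiral knots having $\rho_0(J^{(k)})\neq 0$ (these exist in abundance, e.g.~suitable connected sums $A\# r\overline{A}$ with an amphichiral modification); by \cref{prop:neg-amph} the resulting $K_k$ each have order at most $2$ in $\C^\diff$, while the $\rho$-invariant obstruction still detects them as nontrivial in $\F_n/\F_{n.5}$, and the independence of the $\rho_0$-values across different $k$ promotes this to a $(\Z/2)^\infty$-summand. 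Combining the two yields $\Z^\infty\oplus(\Z/2)^\infty\subseteq \F_n/\F_{n.5}$.

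The main obstacle is the middle step: proving that the coefficient system genuinely extends in the right way over the $n.5$-solution so that the infection contributions decouple additively. This requires a careful induction on the derived series, controlling the $n$th-order Alexander modules of both $M_K$ and $W$, and applying the Cheeger--Gromov bound to ensure the $L^2$-$\rho$-invariants are well-defined real-valued concordance invariants; essentially all the technical weight of \cites{COT1,COT2,CT07,CHL1,CHL2} is concentrated here.
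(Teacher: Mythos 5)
Your outline of the $\Z^\infty$ part follows the strategy of the cited references (COT/CT/CHL): iterated infection along curves deep in the derived series, $n$-solvability of the output from gluing an $n$-solution for $R$ to capped-off $0$-traces of the $J_i$, and obstruction via von Neumann $\rho$-invariants together with higher-order Blanchfield duality to ensure the coefficient system sees at least one infection curve. The paper does not prove \cref{thm:solvable-filt-nontrivial}; it cites the above works, and your sketch for $\Z^\infty$ is in the right spirit (the ``decoupling'' identity and the ``exactly one term survives'' argument are in CHL1 and CT07, respectively).

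However, your $(\Z/2)^\infty$ construction has a genuine gap. If $J$ is negative amphichiral, i.e.\ $J\simeq r\ol{J}$, then the Levine--Tristram signature function satisfies $\sigma_J(\omega)=\sigma_{r\ol{J}}(\omega)=\sigma_{\ol{J}}(\omega)=-\sigma_J(\omega)$, hence $\sigma_J\equiv 0$ and $\rho_0(J)=0$. (In particular $A\# r\ol{A}$ is slice by \cref{prop:easy-ribbon}, so its $\rho_0$ vanishes for a trivial reason.) There are therefore \emph{no} negative amphichiral knots with $\rho_0\neq 0$, and the family you propose does not exist; feeding negative amphichiral $J^{(k)}$ into your additivity formula gives zero on the nose, so the obstruction detects nothing. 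A second, independent issue is that even if $J$ were negative amphichiral, $R_\eta(J)$ need not be: amphichirality of a satellite requires a compatible symmetry of the companion $R$ and the position of the infection curve(s) $\eta$, not merely of $J$.

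The correct route to 2-torsion (this is CHL2, the paper on 2-torsion in the $n$-solvable filtration) replaces your single-curve infection by a symmetric \emph{doubling operator}: one infects a suitably symmetric ribbon knot $R$ along a pair of curves $\eta_1,\eta_2\subseteq \pi_1(S^3\sm R)^{(n)}$ that are interchanged (with appropriate sign) by a symmetry of $R$, using infection knots $J$ that may have $\rho_0(J)\neq 0$. The symmetry of the pattern (not of $J$) forces $K=R_{\eta_1,\eta_2}(J,J)$ to be negative amphichiral, hence of order at most $2$ by \cref{prop:neg-amph}. The delicate point is then that naive additivity could cause the two $\rho_0(J)$ contributions to cancel; CHL avoid this by a higher-order Blanchfield/Alexander-module argument showing that, for any putative $n.5$-solution, the coefficient system must be nontrivial on at least one $\eta_i$, and by choosing $\rho_0(J)$ large relative to the Cheeger--Gromov bound for $M_R$ so that a single surviving term already obstructs. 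You would need to replace your last paragraph with this symmetric-doubling construction and the accompanying non-cancellation argument.
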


For other related work, see also~\cites{jiang81,livingston-order2,DPR-cabling}. By contrast, little is known about the `other half' of the filtration. 

\begin{question}[Open]
For some $n\in\NN$, is the group $\F_{n.5}/\F_{n+1}$ nontrivial? 
\end{question}

For example, it was shown in~\cite{DMOP} that genus one knots which are $0.5$-solvable (equivalently, algebraically slice) are also $1$-solvable. We note that there is an analogue of the solvable filtration for $m$-component (string) links, denoted by $\{\F_n^m\}_{n\in \tfrac{1}{2}\NN}$, where for large enough $m$ it is known that $\F^m_{n.5}/\F^m_{n+1}$ is nontrivial~\cite{Otto-grope}.

Examples of knots lying in $\F_n$ for large $n$ are constructed using \emph{satellite operations}. We will describe this further in~\cref{sec:satellites}. For now, we note that the examples from~\cites{COT1,COT2,CHL1,CHL2} all have genus one. The examples from~\cite{DPR-cabling} are cables of genus one knots, and have large Seifert genus and smooth slice genus, but may well have topological slice genus one. In general it is difficult to bound the topological slice genus of knots lying deep in the solvable filtration. 

\begin{question}[Open, Cha~\cite{cha-minimal}*{Remark~5.6}]\label{question:solvable-genus}
For arbitrary $n>2$ and $g>1$, does there exist a knot in $\F_n$ with topological slice genus at least $g$? 
\end{question}

The $n=0$ and $1$ cases can be shown using Levine--Tristram and Casson--Gordon signatures respectively, while the $n=2$ case was shown by Cha--Miller--Powell in~\cite{cha-miller-powell}.

Membership in the various levels of the solvable filtration can be obstructed using \emph{von Neumann $\rho$-invariants}, defined by Cheeger and Gromov in~\cite{cheeger-gromov}. For a detailed discussion, see \citelist{\cite{CHL-fractal}*{Section 5}\cite{CT07}*{Section 2}\cite{COT1}*{Section 2}}.

\begin{theorem}\label{thm:T-in-solvable-int}
Let $K\subseteq S^3$ be a topologically slice knot. Then $K$ lies in $\F_n$ for every $n$. In other words, $\T\subseteq \bigcap_{n\in \tfrac{1}{2}\NN}\F_n$. 
\end{theorem}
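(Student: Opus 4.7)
The plan is to combine Proposition \ref{prop:0surgerychar} with Proposition \ref{prop:smoothvtop-solvable} and observe that when the ambient homology vanishes, the conditions defining an $n$-solution become vacuous.

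First, I would invoke Proposition \ref{prop:0surgerychar}: since $K$ is topologically slice, there exists a compact, connected, oriented topological $4$-manifold $W$ with $\partial W = S^3_0(K)$ such that inclusion induces an isomorphism $\Z \cong H_1(S^3_0(K);\Z) \to H_1(W;\Z)$ and $H_2(W;\Z) = 0$. This $W$ is the slice disc exterior, and we already have two of the three bullets needed to be a topological $n$-solution essentially for free.

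Second, I would note that since $H_2(W;\Z) = 0$, the required basis of embedded surfaces is the empty set. The conditions in \cref{def:solvable-filtration}(ii) on the $\{L_i, D_i\}$---that they be locally flat embedded, have trivial normal bundles, pair algebraically into hyperbolic summands, and have $\pi_1(L_i), \pi_1(D_i)$ lying in $\pi_1(W)^{(n)}$ (respectively $\pi_1(W)^{(n+1)}$ for the $D_i$ when considering $n.5$-solvability)---are therefore vacuously satisfied, and this holds simultaneously for every $n \in \tfrac{1}{2}\NN$. Hence $W$ is a topological $n$-solution for $K$ for every $n$, which shows $K \in \F_n^\top$ for all $n \in \tfrac{1}{2}\NN$.

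Third, I would apply \cref{prop:smoothvtop-solvable}, which asserts that $\F_n = \F_n^\top$ for every $n \in \tfrac{1}{2}\NN$, to upgrade this to smooth $n$-solvability. Thus $K \in \F_n$ for all $n$, giving $\T \subseteq \bigcap_{n \in \tfrac{1}{2}\NN}\F_n$ as desired.

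There is essentially no obstacle here once the preceding propositions are in hand: the content of the theorem is entirely in \cref{prop:0surgerychar} (for the existence of $W$ with $H_2 = 0$) and in \cref{prop:smoothvtop-solvable} (to pass from the topological category, where the slice disc exterior is naturally defined, to the smooth category in which \cref{def:solvable-filtration} is phrased). The only step requiring any thought is the observation that the basis conditions are vacuous when $H_2(W;\Z)=0$, which frees us from having to check derived-series depth on any surfaces.
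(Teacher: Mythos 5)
Your proof is correct and follows the same route as the paper's: invoke \cref{prop:0surgerychar} to get a $4$-manifold $W$ with $H_2(W;\Z)=0$ (making the surface conditions in \cref{def:solvable-filtration} vacuous and hence $K \in \F^\top_n$ for all $n$), and then apply \cref{prop:smoothvtop-solvable} to pass to the smooth filtration. You have simply spelled out the details that the paper compresses into ``by definition.''
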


\begin{proof}
Since $K$ is topologically slice, by \cref{prop:0surgerychar} we can see that $K\in \F_n^\top$ for every $n$ by definition. Then by \cref{prop:smoothvtop-solvable}, we also know that $K\in \F_n$ for every $n$.
\end{proof}

The above theorem implies that the solvable filtration is not useful for studying the structure of $\T$. However, there is another filtration which can be used for this purpose. 

\begin{definition}[\cite{CHH}*{Definition~5.1}]\label{def:bipolar-filtration}
Let $K\subseteq S^3$ be a knot and $n\in \NN$. We say that $K$ is \emph{$n$-positive} if $S^3_0(K)=\partial W$ where $W$ is a smooth, compact, connected, oriented $4$-manifold such that 
\begin{enumerate}[(i)]
\item the inclusion induces an isomorphism $\Z\cong H_1(S^3_0(K);\Z)\to H_1(W;\Z)$; 
\item $H_2(W;\Z)$ has a basis consisting of smoothly embedded, disjoint, closed, connected, oriented surfaces $\{S_i\}_{i=1}^k$, for some $k$, such that 
\begin{enumerate}
\item each $S_i$ has normal bundle with Euler number $+1$; and 
\item for each $i$, we have $\pi_1(S_i)\subseteq \pi_1(W)^{(n)}$ with respect to the inclusion induced maps; and
\end{enumerate}
\item $\pi_1(W)$ is normally generated by the meridian $\mu_K\subseteq S^3_0(K)$.
\end{enumerate}
The set of $n$-positive knots is denoted by $\P_n$ and the manifold $W$ is called an \emph{$n$-positon}.

We say that $K$ is \emph{$n$-negative} if each surface $S_i$ instead has normal bundle with Euler number $-1$. The set of $n$-negative knots is denoted by $\N_{n}$ and the manifold $W$ is called an \emph{$n$-negaton}.

Let $\B_n:= \P_n\cap \N_n$. Knots in $\B_n$ are said to be \emph{$n$-bipolar}. 
\end{definition}

We leave it to the reader to show that, for each $n\in\NN$, the sets $\P_n$ and $\N_n$ are submonoids of $\C^\diff$, and $\B_n$ is a subgroup of $\C^\diff$ (\green{Exercise~$\triangle$}~\ref{ex:bipolar-subgroup}). As for the solvable filtration, note that condition (i) of \cref{def:bipolar-filtration} and of \cref{prop:0surgerychar} are the same. Indeed, now we further have the same condition (iii). Condition (ii) in \cref{def:bipolar-filtration} is again a generalisation of condition (ii) in \cref{prop:0surgerychar} -- roughly speaking, if we assume that each surface $S_i$ is a sphere, then one could perform a blow down operation on $W$ along $\{S_i\}$, i.e.~remove a neighbourhood of $S_i$ diffeomorphic to the $D^2$-bundle over $S^2$ with Euler number $\pm 1$, and then glue in $B^4$, for each $i$, and the resulting $4$-manifold with boundary $S^3_0(K)$ would have trivial second homology. The condition that $\pi_1(S_i)\subseteq \pi_1(W)^{(n)}$ as before measures how far away the homology class of $S_i$ is from being represented by an immersed sphere: in the case that $\pi_1(S_i)$ is mapped to the trivial subgroup of $\pi_1(W)$, one could perform ambient surgery along half a symplectic basis of curves on $S_i$ to transform it to an immersed sphere.

\begin{definition}
For each $n\in \NN$ define $\T_n:=\T\cap \B_n$. The corresponding filtration $\{\T_n\}_{n\in\NN}$ is called the \emph{bipolar filtration} of $\T$. 
\end{definition}

As desired, the bipolar filtration is highly nontrivial on $\T$, as seen in the theorem below.

\begin{theorem}[\cite{cha-kim-bipolar}, see also~\cites{CHH,CH15}]\label{thm:bipolar-filt-nontrivial}
For each $n$, there is a subgroup of $\T_n/\T_{n+1}$ isomorphic to $\Z^\infty$.
\end{theorem}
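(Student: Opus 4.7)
The plan is to exhibit, for each fixed $n\in\NN$, an infinite family $\{K_j\}_{j\in\NN}$ of topologically slice, $n$-bipolar knots whose classes in $\T_n/\T_{n+1}$ are $\Z$-linearly independent. My strategy has three parts: a satellite construction that produces knots deep in the bipolar filtration while preserving topological sliceness; a computable obstruction visible at the $(n{+}1)$-level; and an arithmetic device that separates different members of the family.

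For construction and membership in $\T_n$, I would start from a topologically slice seed knot $J$ (say, an untwisted positive-clasped Whitehead double of a knot with nontrivial signature function) on which a chosen invariant, such as a Cheeger--Gromov $\rho$-invariant or a $d$-invariant of a cyclic cover of $S^3_0(J)$, is nonzero. I would iteratively apply satellite operations with a sequence of patterns $R_1,\ldots,R_n$ each chosen to have Alexander polynomial one (so topological sliceness is preserved at each stage via Freedman--Quinn) and to ``deepen'' the bipolar filtration by one level. The indexed family $\{K_j\}$ arises by varying the innermost pattern by a parameter $j$, for example via distinct cable twistings or twist knots indexed by distinct primes in the style of~\cite{cha-kim-bipolar}. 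Membership in $\B_n$ is verified by an explicit inductive construction of $n$-positons and $n$-negatons: given a $k$-positon $W$ for the companion, one glues on a standard $4$-manifold built from a ribbon presentation of $R_k$ in $S^1\times D^2$ to manufacture a $(k{+}1)$-positon for the satellite, with an analogous negative-Euler-number construction on the negaton side. The key observation is that the generators of the fundamental groups of the newly added surfaces all lie in the next term of the derived series of $\pi_1$ of the ambient $4$-manifold, so the depth hypothesis really does deepen by one at each satellite step. Starting from a seed in $\B_0$ (arranged via vanishing Arf invariant and a suitable choice of signs) and iterating $n$ times places $K_j$ in $\T_n$.

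For the obstruction, I would invoke a Cha--Kim-style invariant: if $K\in\P_{n+1}$, then iterated $d$-invariants of prime-power cyclic covers of $S^3_0(K)$, or equivalently a length-$(n{+}1)$ sequence of $\rho$-invariants of covers of $S^3_0(K)$ taken along quotients by successive derived subgroups of $\pi_1$, are forced to satisfy a sign-definite inequality; the mirror statement holds in $\N_{n+1}$, so together they vanish on $\B_{n+1}$. A satellite formula reduces the computation of this invariant on $K_j$ to data about $J$ and the patterns, yielding a controlled sequence of nonzero values. To extract $\Z$-linear independence, I would index the innermost pattern by distinct primes $p_j$ so that the $j$-th knot's invariant value lives in $\Z[1/p_j]/\Z$ and the cross-terms vanish mod $p_j$; any finite $\Z$-linear combination vanishing in $\T_n/\T_{n+1}$ would then force each coefficient to be divisible by arbitrarily high powers of $p_j$, hence to be zero.

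The main obstacle will be the third step: producing and verifying an obstruction that is simultaneously (i) strong enough to detect failure of $(n{+}1)$-bipolarity on the constructed examples, (ii) controlled enough under satellite operations to give a clean formula, and (iii) sufficiently triangular across $j$ to deliver linear independence. Steps one and two are conceptually familiar elaborations of the solvable-filtration machinery of~\cite{COT1,CHH}; the real work lies in the invariant-theoretic calculations of~\cite{cha-kim-bipolar}, where Cheeger--Gromov's universal $L^2$-bound, additivity properties of $d$-invariants across covers, and careful bookkeeping of the $4$-manifold error terms are what make the triangular structure of the obstruction work.
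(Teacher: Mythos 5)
The survey does not prove this theorem; it cites \cite{cha-kim-bipolar} and records only that the construction uses iterated satellite operations (cf.\ \cref{prop:solvable-infection}) and that the obstruction combines von Neumann $\rho$-invariants with Heegaard--Floer $d$-invariants, which is exactly the skeleton of your proposal. One detail to tighten: for \cref{prop:solvable-infection}(5) to place the iterate in $\T_{n}$ you need the patterns $R_i$ to lie in $\T_n$ themselves, which having Alexander polynomial one alone does not guarantee --- in the references the patterns are taken ribbon (hence smoothly slice, hence in every $\T_n$), and it is the ribbon structure, not Freedman--Quinn, that both secures topological sliceness of the iterate and supplies the positons and negatons you build from the ribbon presentations in your next sentence.
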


As before, the examples of knots lying in $\T_n$ for large $n$ used in the theorem above are constructed via a generalisation of the satellite operation, which we will describe in~\cref{sec:satellites}. As in the case of the solvable filtration. membership in the various levels of the positive, negative, and bipolar filtrations of $\C^\diff$ can be obstructed using the von Neumann $\rho$-invariants. However, these are not sufficient to prove nontriviality of the bipolar filtration of $\T$. The proof of \cref{thm:bipolar-filt-nontrivial} combines von Neumann $\rho$-invariants with the Heegaard--Floer $d$-invariant. For more details, see~\cites{CHH,CH15,cha-kim-bipolar}.

There are several open questions about the bipolar filtration, such as the following. 

\begin{question}[Open]
Is there a (short) list of invariants characterising knots lying in $\B_0$? Note that there are explicit characterisations of knots in $\F_0$ and $\F_{0.5}$ (\red{Exercise~$\bigcirc$}~\ref{ex:arf-0solvable}).
\end{question}

\begin{question}[Open]
For $n\in \NN$, is there a subgroup of $\T_n/\T_{n+1}$ isomorphic to $(\Z/2)^\infty$? (See \cref{thm:solvable-filt-nontrivial,thm:bipolar-filt-nontrivial}.)
\end{question}

\begin{question}[Open]
For arbitrary $n\geq 0$ and $g>1$, does there exist a knot in $\T_n$ with smooth slice genus at least $g$? (See \cref{question:solvable-genus}.)
\end{question}
Unlike \cref{question:solvable-genus}, the above appears to be open even in the case $n=0$. 

\section{Satellite operations on knots}\label{sec:satellites}

In this final section, we discuss satellite operations on knots. Recall that Whitehead doubling is a special case of the satellite operation, and satellite operations can be used to construct examples of knots deep in the solvable and bipolar filtrations. 

\begin{definition}\label{def:satellite} A \emph{pattern} $P$ is a knot in the solid torus~$S^1\times D^2$. Given a knot $K$, called the \emph{companion}, the \emph{satellite knot} $P(K)$ is obtained by tying the solid torus into the knot $K$, in an appropriately untwisted manner, as shown in \cref{fig:whitehead-doubling}.  More precisely, we map $S^1\times D^2$ to a tubular neighbourhood $\nu K$ of $K$, so that $S^1\times x$ for $x\in \partial D^2$ is mapped to the Seifert longitude of $K$. The image of $P$ under this map is the satellite knot $P(K)$ by definition. Note that the knot $P(K)$ inherits an orientation from $P$ and $K$. 

The algebraic intersection number of $P$ with a generic meridional disc of $S^1\times D^2$ is called the \emph{winding number} of~$P$. For example, the winding number of the Whitehead doubling patterns $\Wh^\pm$ is zero.
\end{definition}

The satellite operation is compatible with concordance, and yields well defined functions on the concordance groups, called \emph{satellite operators} (see~\green{Exercise~$\triangle$}~\ref{ex:satellite-concordance}). 

We next give an alternative view of the satellite construction. In this case, we begin with a knot $R\subseteq S^3$ and a curve $\eta\subseteq S^3\sm R$ so that $\eta$ is unknotted when considered in $S^3\supseteq S^3\sm R$. Then note that the complement in $S^3$ of a tubular neighbourhood $\nu\eta$ of $\eta$ can be canonically identified with the solid torus, so $R\subseteq S^3\sm \nu \eta$ is a pattern. So, given any arbitrary knot $K$, we could construct the satellite knot with respect to this pattern. In this case, we denote the result by $R_\eta(K)$, and call it the \emph{result of infection on $R$ by $K$ along $\eta$}. This procedure is depicted in~\cref{fig:infection}. There is a generalisation of the infection procedure where as input we can take not just knots but arbitrary \emph{string links}. For more on this procedure, see~\citelist{\cite{cochran-orr-homology}*{Section~1}\cite{cochran-nckt}*{p.~385}\cite{CFT09}*{Section~2.2}}.

\begin{figure}[htb]
	\centering
\begin{tikzpicture}
		\node[anchor=south west,inner sep=0] at (0,0){	\includegraphics[width=12cm]{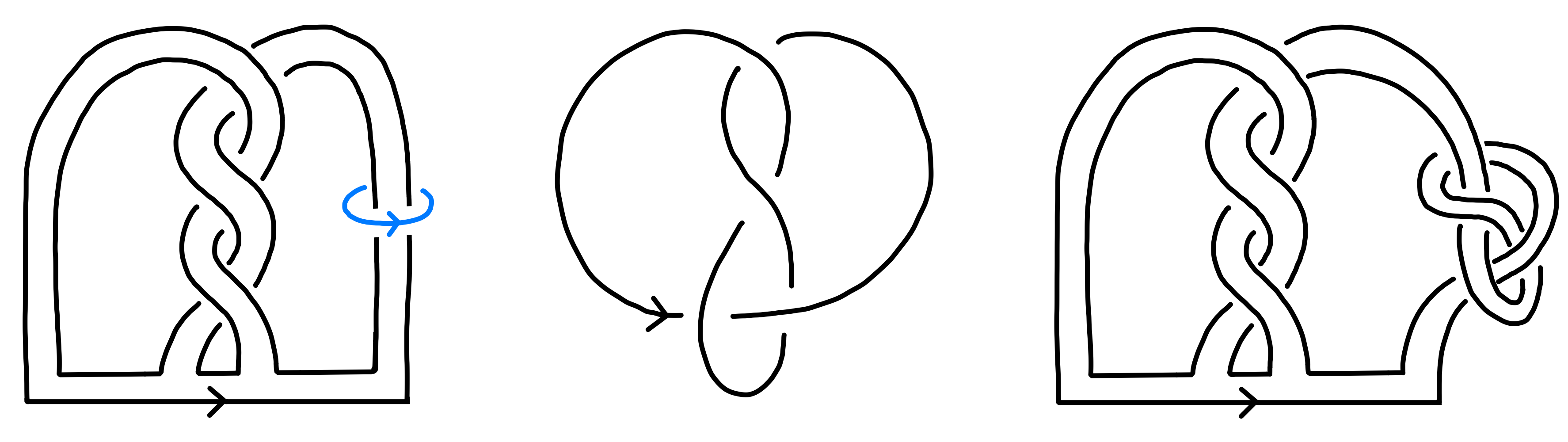}};
		\node at (1.65,-0.2){$R$};		
		\node at (5.65,-0.2){$K$};
		\node at (3.5,1.75){$\eta$};
		\node at (9.8,-0.2){$R_\eta(K)$};
	\end{tikzpicture}	
  \caption{On the left, we have a ribbon knot $R$ with an unknot $\eta\subseteq S^3\sm R$. Given a knot $K$, we show on the right the result of infection on $R$ by $K$ along $\eta$.}\label{fig:infection}
\end{figure}

As previously indicated, the infection procedure can be used to construct knots in the various filtrations from the previous section. The key tool is the following proposition.

\begin{proposition}[\citelist{\cite{COT2}*{Proposition~3.1}\cite{CHL1}*{Theorem~7.1}\cite{CHL-fractal}*{Proposition~2.7}\cite{CHH}*{Proposition~3.3}}]\label{prop:solvable-infection}
Let $R\subseteq S^3$ be a knot and $\eta\subseteq S^3\sm R$ be a curve which is unknotted when considered in $S^3\supseteq S^3\sm R$. Let $K\subseteq S^3$ be an arbitrary knot, and let $n,i\geq 0$ be arbitrary.
\begin{enumerate}
\item if $R,K\in \F_n$ and $\eta\in \pi_1(S^3\sm R)^{(i)}$, then $R_\eta(K)\in \F_{n+i}$.
\item if $R,K\in \P_n$ and $\eta\in \pi_1(S^3\sm R)^{(i)}$, then $R_\eta(K)\in \P_{n+i}$.
\item if $R,K\in \N_n$ and $\eta\in \pi_1(S^3\sm R)^{(i)}$, then $R_\eta(K)\in \N_{n+i}$.
\item if $R,K\in \B_n$ and $\eta\in \pi_1(S^3\sm R)^{(i)}$, then $R_\eta(K)\in \B_{n+i}$.
\item if $R,K\in \T_n$ and $\eta\in \pi_1(S^3\sm R)^{(i)}$, then $R_\eta(K)\in \T_{n+i}$.
\end{enumerate}
\end{proposition}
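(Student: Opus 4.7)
All five parts follow a common template built on the standard \emph{infection construction} on $4$-manifolds, introduced in~\cite{COT2} and refined in~\cites{CHL1,CHL-fractal,CHH}. I outline the proof of (1); the remaining parts are parallel, replacing the geometric data for $n$-solutions by that for $n$-positons, $n$-negatons, or their intersection.

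The plan is to construct, from $n$-solutions $W_R$ for $R$ and $W_K$ for $K$, an $(n+i)$-solution $W$ for $R_\eta(K)$. Since $\eta$ is unknotted in $S^3$, the $3$-manifold $S^3_0(R_\eta(K))$ decomposes as $(S^3_0(R) \setminus \nu\eta) \cup_T (S^3 \setminus \nu K)$, with the torus gluing identifying $\mu_\eta \leftrightarrow \lambda_K$ and $\lambda_\eta \leftrightarrow \mu_K$. I lift this to four dimensions by gluing $W_R$ and $W_K$ along a thickened torus neighbourhood of $\eta \subset \partial W_R$ and of the core of the surgery solid torus (the curve $\lambda_K$) in $\partial W_K$. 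A Mayer--Vietoris calculation then confirms that the inclusion $H_1(S^3_0(R_\eta(K));\Z) \to H_1(W;\Z) \cong \Z$ is an isomorphism and that $H_2(W;\Z)$ is freely generated by the original basis surfaces $\{L_j, D_j\}$ of $H_2(W_R;\Z)$ together with $\{L_j', D_j'\}$ of $H_2(W_K;\Z)$, all of which can be chosen disjoint from the gluing region and so retain their intersection pattern in $W$.

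The heart of the argument is the analysis of depth in the derived series. The surfaces $\{L_j', D_j'\}$ inherited from $W_K$ have $\pi_1$ contained in $\pi_1(W_K)^{(n)}$. After a standard modification of $W_K$ (adding $2$-handles to kill extraneous $\pi_1$ generators and $3$-handles to restore $H_2$, preserving the $n$-solution property), one may assume $\pi_1(W_K)$ is normally generated by $\mu_K$. Under the infection gluing, $\mu_K$ is identified with $\lambda_\eta$, which is isotopic to $\eta$ inside $S^3 \setminus R$. By hypothesis $\eta \in \pi_1(S^3 \setminus R)^{(i)}$, so the image of $\mu_K$ in $\pi_1(W)$ lies in $\pi_1(W)^{(i)}$. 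Since $\pi_1(W)^{(i)}$ is normal in $\pi_1(W)$, the entire image of $\pi_1(W_K)$ is contained in $\pi_1(W)^{(i)}$; using the identity $(G^{(i)})^{(n)} = G^{(n+i)}$, the image of $\pi_1(W_K)^{(n)}$ therefore lies in $\pi_1(W)^{(n+i)}$, as required.

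The main obstacle is the analogous analysis for the $W_R$-surfaces, ensuring that both families simultaneously attain depth $n+i$ in $\pi_1(W)$. This is the technically delicate step, handled by choosing the generating basis with care (possibly tubing the $W_R$-surfaces into the $W_K$-side along loops representing elements of $\pi_1(W)^{(i)}$) or by arguing via derived covers and mixed coefficient systems as in~\cite{CHL1}*{Theorem~7.1} and~\cite{CHL-fractal}*{Proposition~2.7}. For parts (2)--(5), one further observes that the prescribed normal Euler numbers are preserved by the construction since the surfaces lie away from the gluing region, and that the normal-generation condition on $\pi_1(W)$ by the meridian of the infected knot (see Definition~\ref{def:bipolar-filtration}(iii)) follows from the analogous conditions on $W_R$ and $W_K$ together with the $3$-dimensional identifications coming from the infection formula.
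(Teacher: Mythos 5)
The paper leaves this proof as an exercise (\orange{Exercise~$\square$}~\ref{ex:solvable-infection}) and points to the cited references, so I assess your argument on its own terms. Your outline -- glue an $n$-solution $W_K$ for $K$ to a $4$-manifold $W_R$ for $R$, identifying the solid torus $\nu\eta \subseteq \partial W_R$ with the surgery solid torus of $S^3_0(K) \subseteq \partial W_K$; compute $H_*$ by Mayer--Vietoris; and use $\bigl(G^{(i)}\bigr)^{(n)} = G^{(n+i)}$ to push the $W_K$-surfaces down the derived series -- is the standard one, and your analysis of the $W_K$-side is essentially right. One slip: the core of the $0$-surgery solid torus of $S^3_0(K)$ is isotopic to the meridian $\mu_K$, not to $\lambda_K$; you use the correct identification $\mu_K \leftrightarrow \lambda_\eta$ in the next paragraph, so this is only a local inconsistency. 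One caution: normal generation of $\pi_1(W_K)$ by $\mu_K$ is built into \cref{def:bipolar-filtration} and hence free in parts (2)--(5), but it is not part of \cref{def:solvable-filtration}; the modification of $W_K$ you invoke for part (1) is more delicate than you indicate, since surgering out an extra $\pi_1$-generator creates a new $H_2$-class dual to the surgery sphere whose representative surface has no reason to have $\pi_1$ in $\pi_1(W)^{(n)}$, and this needs to be addressed.

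The genuine gap is the step you defer as ``technically delicate'': the $W_R$-surfaces. With only $R \in \F_n$ this step is not delicate but impossible: the inclusion $\pi_1(W_R) \to \pi_1(W)$ does not land in $\pi_1(W)^{(i)}$ for $i \geq 1$ (the meridian $\mu_R$ maps to the generator of $H_1(W) \cong \Z$), so there is no reason the images of $\pi_1(L_j), \pi_1(D_j) \subseteq \pi_1(W_R)^{(n)}$ should reach depth $n+i$. Indeed the proposition is false as printed. Take $R$ with $\Arf(R) = 0$ but $\sigma(R) \neq 0$, e.g.\ a connected sum of two right-handed trefoils, so $R \in \F_0$ but $R \notin \F_{0.5}$; take $\eta$ a small unknot bounding a disc disjoint from $R$, so $\eta$ is nullhomotopic in $S^3 \sm R$ and lies in $\pi_1(S^3 \sm R)^{(i)}$ for every $i$; and take $K$ the unknot. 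Then $R_\eta(K) = R$, yet part (1) with $n=0$, $i=1$ would force $R \in \F_1 \subseteq \F_{0.5}$, a contradiction. The cited sources all take $R$ to be slice or ribbon, so that $R$ lies in $\F_m$ for every $m$; the proposition should be read with $R \in \F_{n+i}$ (resp.\ $\P_{n+i}$, $\N_{n+i}$, $\B_{n+i}$, $\T_{n+i}$) in place of $R \in \F_n$, as in the paper's own application where $R$ is ribbon. Under that corrected hypothesis the $W_R$-surfaces need no work at all, since any homomorphism carries $(n+i)$-th derived subgroups into $(n+i)$-th derived subgroups, and your analysis of the $W_K$-side then completes the argument.
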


We leave the proof for the ambitious reader (\orange{Exercise~$\square$}~\ref{ex:solvable-infection}). Armed with this proposition, we are now able to construct knots deep in any of the solvable, positive, negative, or bipolar filtrations, by the following strategy. Let us consider the solvable filtration for concreteness and attempt to build a knot in $\F_n$ for some given $n\in \NN$. Start with any knot $K$ with $\Arf(K)=0$, e.g.~the connected sum of two copies of the right-handed trefoil knot. Recall that we know that $K\in\F_0$ by \red{Exercise~$\bigcirc$}~\ref{ex:arf-0solvable}. Then choose a ribbon knot $R$. By~\cref{thm:T-in-solvable-int}, we know that $R\in \bigcap_{i\in \tfrac{1}{2}\NN} \F_i$. Find an unknotted $\eta\subseteq S^3\sm R$ such that $\lk(R,\eta)=0$. By definition of the linking number we know that $\eta\in \pi_1(S^3\sm R)^{(1)}$. So then $R_\eta(K)\in \F_1$ by \cref{prop:solvable-infection}. Then we iterate. In other words, to find a knot in $\F_n$, we need the $n$-fold iterated satellite $R_\eta(R_\eta(\cdots(K)\cdots))$. Note that these iterated satellites have bounded Seifert genus (\orange{Exercise~$\square$}~\ref{ex:satellite-genus}) and we only needed the $k=1$ case of the proposition. 

\textit{A priori} it might seem that satellites are quite special. However, by~\cite{CFT09}*{Proposition~1.7}, any algebraically slice knot is concordant to a knot of the form $R_\eta(J)$, where $J$ is a string link with pairwise linking numbers zero, the knot $R$ is ribbon, and $\eta$ is now an unlink, where each component has linking number zero with $R$. Therefore, the construction explained above is rather general. 

We finish this section by surveying a few results and open questions about the effect of satellite operators on the knot concordance groups. We already saw questions about the Whitehead doubling satellite operator in~\cref{sec:defns}. E.g.~\cref{conj:whitehead-injective} can be rephrased as asking whether the Whitehead doubling satellite operator is injective. 

\begin{question}[Open]
Is there a pattern $P$ with winding number $\neq \pm 1$ such that $P(K)$ is slice if and only if $K$ is slice, in either category? More generally, such that $P(K)$ and $P(J)$ are concordant if and only if $K$ and $J$ are concordant?
\end{question}

Examples of winding number $\pm 1$ satellite operators which are injective on $\C^\top$ were given in~\cite{CDR14}. From another perspective, one could ask when satellite operators are surjective on the concordance groups. For winding numbers $\neq \pm 1$ it is not difficult to find nonsurjective satellite operators using algebraic invariants~\cite{DR-groupactions}*{Proposition~3.1}. The first example of a nonsurjective satellite operator on $\C^\diff$ with winding number $\pm1$ was given by A.~Levine in~\cite{levine-nonsurjective}. Other work has considered the invertibility of satellite operators, such as~\cites{DR-groupactions,miller-piccirillo}. 

The existence of injective nonsurjective satellite operators is evidence towards the following conjecture. Further evidence was provided in~\cite{CHL-fractal} by showing that a large class of winding number zero satellite operators, called \emph{robust doubling operators}, are injective on large subgroups of $\C^\diff$. 

\begin{conjecture}[Open, Cochran--Harvey--Leidy~\cite{CHL-fractal}]
The knot concordance groups have the structure of a fractal.
\end{conjecture}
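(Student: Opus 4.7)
Since this is an open conjecture, any plan is speculative; I will outline the strategy suggested most directly by the material developed in the preceding sections. First, one must make the vague word ``fractal'' precise. The working interpretation, following the CHL philosophy, is that there is a rich collection of satellite-induced endomorphisms $P_*\colon \C^\diff \to \C^\diff$ (one for each suitable pattern $P$) which are injective but far from surjective, whose images are nested in a self-similar pattern indexed by some tree (e.g.\ indexed by iterated infection data), and such that the subquotients attached to different branches look isomorphic to the whole group ``rescaled'' by a level shift in the bipolar or solvable filtration. The plan therefore has two parts: (i) prove injectivity of a sufficiently large family of winding-number-zero satellite operators on all of $\C^\diff$, and (ii) identify the resulting images with well-defined filtration levels so that iteration produces the desired self-similar nesting.

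For (i), I would start from the class of robust doubling operators studied by Cochran--Harvey--Leidy, which are already known to be injective on large subgroups of $\C^\diff$. The goal would be to remove the subgroup hypothesis by combining the $\rho$-invariant obstructions used in \cref{thm:solvable-filt-nontrivial} with the Heegaard--Floer $d$-invariant arguments that appear in \cref{thm:bipolar-filt-nontrivial}. Concretely, given a pattern $R$ with infection curve $\eta \in \pi_1(S^3 \sm R)^{(1)}$ of linking number zero, one would suppose $R_\eta(K) \concsm R_\eta(J)$ and attempt to deduce $K \concsm J$ by analysing a putative concordance via a tower of covers, extracting $\rho$-invariants at each level controlled by the derived series length, and using the satellite formula $R_\eta(K\#\ol{J}^r) \sim R_\eta(K)\# R_\eta(J)^{-1}$ up to correction terms that must be shown to vanish in the concordance group.

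For (ii), I would formalise ``self-similarity'' by showing that for each such $P$ and each $n \in \tfrac{1}{2}\NN$, $P_*$ restricts to an injection $\F_n \hookrightarrow \F_{n+1}$ (which already follows from \cref{prop:solvable-infection}) whose image is a proper subgroup, and that the associated graded quotients $\F_n/\F_{n.5}$ contain infinitely many isomorphic copies of a common building block coming from different choices of $P$. Organising these operators into a tree (with vertices given by iterated infections $R_{\eta_1} \circ R_{\eta_2} \circ \cdots$ and edges recording a single infection step) would yield a map from a Cantor-set-like parameter space into $\C^\diff$ with self-similar image, which is the structural statement one expects from a fractal.

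The main obstacle, by a wide margin, is step (i): there is no known technique that obstructs concordance between two arbitrary knots $R_\eta(K)$ and $R_\eta(J)$ without some prior hypothesis controlling $K$ and $J$ (such as membership in a specific solvable or bipolar term, or a Casson--Gordon-style computability condition). Von Neumann $\rho$-invariants and $d$-invariants are effective on carefully engineered examples, but pushing them to prove injectivity on all of $\C^\diff$ would require either a new invariant stable under satellite operations or a clever reduction of arbitrary knots to the robust class where CHL's methods already apply. A realistic first milestone, and the one I would attempt before the full conjecture, is to upgrade \cite{CHL-fractal} from ``injective on a large subgroup'' to ``injective on the kernel of all known finite-type concordance invariants,'' which would at least rule out the most natural sources of counterexamples.
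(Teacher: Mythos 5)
This is stated in the paper as an open conjecture with no proof; the only thing to compare your outline against is the evidence the paper records (injective nonsurjective satellite operators, CHL's robust doubling operators, and the metrics on $\C^\diff$ developed by Cochran--Harvey and collaborators). Your proposal is in the right spirit, and you correctly single out injectivity of winding-number-zero satellite operators on all of $\C^\diff$ as the core missing ingredient. But two specifics do not align with the paper's framing.

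Your formalisation of ``fractal'' in part (ii) replaces the paper's intended notion with a different one. The cited definition is ``self-similarities at arbitrarily small scales,'' and the paper explicitly flags that interpreting ``scale'' requires a suitable metric on $\C^\diff$, referring the reader to the metric papers. Your plan never introduces a metric: a nesting of satellite images inside the filtration terms $\F_n$ is a coarser structure, those two families of subgroups need not coincide, and nothing in your sketch shows the satellite images become small in the metric sense, which is the actual content of ``arbitrarily small scales.'' Separately, in step (i) the proposed reduction via $R_\eta(K \# r\ol{J}) \sim R_\eta(K) \# R_\eta(J)^{-1}$ ``up to correction terms'' runs against a point the paper stresses: winding-number-zero satellite operators are far from homomorphisms on $\C^\diff$ (Gompf's theorem for Whitehead doubling), so those correction terms are precisely the difficulty and cannot be expected to vanish. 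The CHL injectivity results sidestep this by restricting to carefully engineered subgroups where the von Neumann $\rho$-invariant bookkeeping is controlled; extending injectivity to all of $\C^\diff$ would require a genuinely new mechanism, not a connected-sum identity. Your candid acknowledgement that this is the hard step is right; the specific route you sketch for it would not get off the ground.
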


We say a set has a fractal structure if it admits \textit{self-similarities at arbitrarily small scales}, following~\cite{fractal}*{Definition~3.1}. The results mentioned above show that infinite classes of winding number $\pm 1$ satellite operators, as well as the robust doubling operators, are candidate self-similarities for $\C^\top$ and $\C^\diff$. In order to understand the notion of ``scale'' one needs a suitable metric on the knot concordance groups. This has been studied in \cites{CH14,CHP15,CHPR23}. One might also consider the behaviour of satellite operators under iteration, such as in \cites{ray-iterates,chen-iteratedmazur}.

Since knots under concordance form a group and satellite operators are maps on the concordance groups, it is tempting to assume that they are homomorphisms. However, this is far from the case. In~\cite{gompf-smoothconc}, Gompf showed that the Whitehead doubling operator is not a homomorphism on $\C^\diff$, despite being the zero map on $\C^\top$. Modern tools from Heegaard--Floer homology can be used to obstruct satellite operators on $\C^\diff$ from being homomorphisms~\cites{levine-nonsurjective,hedden-cabling}. In the topological category, obstructions can be obtained from Casson--Gordon invariants~\cite{miller-homomorphisms}. Most generally, we have the following conjecture. (For a recent, partial resolution, see~\cite{ANM-heddenconj}.)

\begin{conjecture}[Open, Hedden~\cites{MPIM16,BIRS16}]
Let $P\subseteq S^1\times D^2$ be a pattern. If the induced satellite operator $P\colon \C^\diff\to\C^\diff$ is a homomorphism, then it agrees with either the zero map, the identity map, or the reversal map (taking each knot to the concordance class of its reverse).
\end{conjecture}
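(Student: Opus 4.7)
The plan is to combine satellite formulas for various concordance invariants with the homomorphism hypothesis in order to pin down $P$. Let $w$ denote the winding number of $P$. Many classical concordance invariants $\nu$ satisfy satellite formulas of the form $\nu(P(K)) = \nu(P) + \nu_w(K)$: for example, the Litherland formula for Tristram--Levine signatures gives $\sigma_\omega(P(K)) = \sigma_\omega(P) + \sigma_{\omega^w}(K)$; the Arf invariant satisfies $\Arf(P(K)) = \Arf(P) + w\cdot \Arf(K) \pmod 2$; and the Alexander polynomial satisfies $\Delta_{P(K)}(t) \doteq \Delta_P(t)\,\Delta_K(t^w)$. Imposing the homomorphism identity $P(K\# J) \sim P(K)\# P(J)$ in concordance, together with the fact that these invariants vanish (or take norm form) on slice knots, yields a first battery of vanishing and norm conditions on $P$; for instance, every additive invariant of $P$ must vanish, and $\Delta_P(t)^3$ must be of the form $f(t)f(t^{-1})$ for some $f\in \laurent$.

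The next step is to restrict the winding number to $\{0,\pm 1\}$. For $|w|\geq 2$, I would combine the above vanishing conditions with Heegaard--Floer obstructions: Hom's cabling formulas for the $\tau$-invariant, together with $d$-invariant machinery of the type used to prove \cref{thm:bipolar-filt-nontrivial}, give satellite formulas that are highly nonlinear in $K$ unless $|w|\leq 1$. Plugging in a family of test companions with diverse Heegaard--Floer behaviour (e.g.\ $L$-space knots, iterated cables, Whitehead doubles of torus knots) produces constraints on the finitely many parameters $\{\nu(P)\}$ that become mutually incompatible once $|w|\geq 2$. A parallel stream of obstructions comes from Casson--Gordon invariants, which have been shown in \cite{miller-homomorphisms} to obstruct the homomorphism property in many cases.

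Having reduced to $|w|\leq 1$, I would proceed case by case. For $w=0$, the homomorphism hypothesis forces $P(K)$ to be concordant to $P(\unknot)$ for every $K$; the vanishing of Arf, all Tristram--Levine signatures, Casson--Gordon, $\tau$, and $d$-invariants of $P(\unknot)$ should force $P(K)$ to be topologically slice, and, combined with the bipolar filtration apparatus of \cref{def:bipolar-filtration}, smoothly slice, giving the zero operator. For $w=+1$, the operator is the identity on $\AC$; the homomorphism hypothesis applied to infections of $P$ via \cref{prop:solvable-infection} and an inductive argument descending through the solvable filtration should upgrade this to the identity on $\C^\diff$ itself. The case $w=-1$ is treated analogously by composing with the reversal map, yielding the reversal operator.

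The main obstacle is our limited understanding of $\C^\diff$: no complete set of smooth concordance invariants is known, and it is not even known whether $\C^\diff$ has torsion of order different from two. In particular, the scheme above relies on the combined power of the classical and Heegaard--Floer invariants being sufficient both to detect sliceness in the $w=0$ case and to separate the identity and reversal operators from other conceivable homomorphisms, which is not established in general. Moreover, the reduction $|w|\leq 1$ hinges on satellite formulas for Heegaard--Floer invariants that are presently known only in restricted regimes, such as cables of $L$-space knots; extending them to arbitrary patterns is itself a substantial open problem. The recent partial resolution in \cite{ANM-heddenconj} confirms significant special cases, but strongly suggests that genuinely new ideas, perhaps going beyond the classical and Heegaard--Floer toolkits, will be required for the full conjecture.
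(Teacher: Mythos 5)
This statement is explicitly labelled \emph{Open} in the paper and attributed to Hedden; the paper contains no proof of it, and none is currently known, so there is nothing in the source to compare your argument against. Your text is a research programme rather than a proof, and to your credit you say so plainly in the final paragraph. The gaps you name are the genuine ones: the reduction to $|w|\leq 1$ leans on Heegaard--Floer satellite formulas that are established only for special pattern families (cables, $L$-space patterns), not for arbitrary $P$; and the $w=0$ case would require that the combined battery of Tristram--Levine, Casson--Gordon, $\tau$, $d$-invariant, and bipolar-filtration obstructions collectively detect smooth sliceness of $P(K)$, which is far from known and is essentially equivalent to a complete understanding of $\T$. Likewise there is no known mechanism for upgrading ``acts as the identity on $\AC$'' to ``acts as the identity on $\C^\diff$'' in the $w=\pm 1$ case, since the kernel of $\C^\diff\twoheadrightarrow\AC$ is exactly the part of the group we understand least. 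One small technical slip worth flagging: since a homomorphism $P$ must send the unknot $U$ to a slice class (\green{Exercise~$\triangle$}~\ref{ex:homomorphism-slice}) and $\Delta_{P(U)}(t)\doteq\Delta_P(t)$, the Fox--Milnor factorisation constraint applies to $\Delta_P(t)$ itself, not to $\Delta_P(t)^3$. In short, your write-up is an accurate and honest survey of why the conjecture is hard, but it is not a proof, and you correctly diagnose that closing it would require new tools beyond the existing classical and Heegaard--Floer machinery.
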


Finally, there is substantial interest in understanding whether satellite operators preserve linear independence. We highlight two conjectures of Hedden and Pinz\'{o}n-Caicedo. 

\begin{conjecture}[Open, Hedden--Pinz\'{o}n-Caicedo~\cite{HPC-infiniterank}*{Conjecture~2}]
The image of every nonconstant satellite operator on $\C^\diff$ generates an infinite rank subgroup of $\C^\diff$. 
\end{conjecture}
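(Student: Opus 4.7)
The plan is to exhibit, for a given nonconstant pattern $P\subseteq S^1\times D^2$, an infinite sequence of companions $\{K_j\}_{j\in\NN}$ whose satellites $\{P(K_j)\}$ are $\Z$-linearly independent in $\C^\diff$. My first step would be to fix a concordance invariant $\phi\colon\C^\diff\to\R$ whose behaviour under the satellite operation is controllable, in the sense that $\phi(P(K))$ is determined up to prescribed error by $\phi(K)$ together with combinatorial data of $P$. Then one picks $K_j$ from a $\phi$-unbounded linearly independent family and tries to promote linear independence of $\{\phi(P(K_j))\}$ to that of $\{P(K_j)\}$; this last step is clean when $\phi$ is a concordance homomorphism, and otherwise one has to control cross terms using bounds on $\phi$ of connected sums.

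\textbf{Case split by winding number.} Let $w$ denote the winding number of $P$. If $w\neq 0$, then classical signatures and Heegaard--Floer concordance invariants such as $\tau$, $s$, and $\Upsilon$ satisfy satellite formulas whose leading term scales with $w$, so taking companions $K_j = T_{2,2j+1}$ (torus knots with unbounded $\tau$) should yield infinite rank almost immediately. When $w=0$ all abelian concordance invariants of $P(K)$ depend only on $P$, so $\phi$ must be a higher-order invariant. A first attempt would use von Neumann $\rho$-invariants combined with \cref{prop:solvable-infection}: for a suitable infecting curve $\eta$ lying in $\pi_1(S^3\sm R)^{(i)}$ (where $R=P(\unknot)$), one hopes to produce satellites ranging through, or distinguishable within, infinitely many levels of the solvable filtration.

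\textbf{Main obstacle.} The hardest subcase is the winding-zero patterns for which $P(K)$ is topologically slice for every $K$, as in Whitehead-like doubling. Here by \cref{thm:T-in-solvable-int} all of $P(\C^\diff)$ lies inside $\T$, so the entire solvable filtration is insensitive, and one must work instead inside the bipolar filtration using a smooth invariant such as the Heegaard--Floer $d$-invariant, in the spirit of \cite{cha-kim-bipolar}. The central difficulty is converting the geometric hypothesis ``$P$ is not a constant pattern'' into the algebraic conclusion ``some well-chosen concordance invariant separates infinitely many $P(K_j)$''. Possible handles on nonconstancy include $P$ representing a nontrivial element of $\pi_1(S^1\times D^2)=\Z$, or $P$ being nontrivial in $\pi_2$ of the relevant mapping space; but isolating a workable algebraic consequence of nonconstancy that works uniformly across \emph{all} winding-zero patterns is exactly where I expect to get stuck, and presumably why the conjecture remains open.
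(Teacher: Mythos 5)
The statement you were asked to prove is labelled in the paper as an \emph{open} conjecture (Hedden--Pinz\'on-Caicedo), and the paper contains no proof of it; there is therefore nothing to compare your attempt against, and your sketch should be assessed on its own terms as a survey of plausible strategies.

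Your case split by winding number is the right framework and matches the known state of the art exactly: the paper itself records that the winding number $\neq 0$ case is not difficult using algebraic invariants (your signature and $\tau$/$\Upsilon$ argument with torus-knot companions is essentially that), and that~\cite{HPC-infiniterank} gives a sufficient criterion for certain winding-zero operators, generalising~\cite{hedden-kirk} and~\cite{PC-rank}. You also correctly isolate where the difficulty concentrates: for winding-zero patterns whose image lies inside $\T$, \cref{thm:T-in-solvable-int} renders the entire solvable filtration and all associated $\rho$-invariant obstructions blind, so one is forced onto smooth-only tools (Heegaard--Floer $d$-invariants or gauge theory) in the spirit of~\cite{cha-kim-bipolar} and the bipolar filtration. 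Your final paragraph is the honest and accurate conclusion: converting the geometric hypothesis ``$P$ is nonconstant'' into a uniformly applicable algebraic statement across all winding-zero patterns is precisely the open obstacle, and no one currently knows how to do it. One small caution on your proposed handles on nonconstancy: the class of $P$ in $\pi_1(S^1\times D^2)\cong\Z$ \emph{is} the winding number, so it carries no information in exactly the hard case $w=0$, and $\pi_2(S^1\times D^2)=0$; nonconstancy in the winding-zero regime is genuinely a statement about the isotopy class of $P$ in the solid torus rather than about homotopy, which is part of why it is hard to feed into concordance invariants. In short: your proposal is a faithful reconstruction of the known partial progress and a correct diagnosis of why the conjecture remains open, but it is not, and does not claim to be, a proof.
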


The winding number $\neq 0$ case is not difficult to prove using algebraic invariants. \cite{HPC-infiniterank} provides a criterion guaranteeing that winding number zero satellite operators have infinite rank images, generalising~\cites{hedden-kirk,PC-rank}. 

\begin{conjecture}[Open, Hedden--Pinz\'{o}n-Caicedo~\cite{HPC-infiniterank}*{Conjecture~3}]
For any nonconstant winding number zero operator $P$, there exists a knot $K$ for which the set $\{P(nK)\mid n\in \Z\}$ has infinite rank in $\C^\diff$.
\end{conjecture}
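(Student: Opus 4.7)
The plan is to detect linear independence of $\{P(nK)\}_{n\in\Z}$ via higher-order von Neumann $\rho$-invariants, following the blueprint of \cites{COT1,CHL-fractal,HPC-infiniterank}. Classical Alexander- and Tristram--Levine-type invariants cannot suffice: for any winding number zero pattern one has $\Delta_{P(J)}(t) = \Delta_{P(\unknot)}(t)$ and $\sigma_{P(J)}(\omega) = \sigma_{P(\unknot)}(\omega)$ for every companion $J$, so all such classical invariants of $P(nK)$ are independent of $n$.

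First I would realise $P$ as infection data: after isotopy, write $P(K) = R_\eta(K)$ where $R\subseteq S^3$ is a knot and $\eta\subseteq S^3\setminus R$ is unknotted in $S^3$ with $\lk(R,\eta) = 0$, so that $\eta\in \pi_1(S^3\setminus R)^{(1)}$. Nonconstancy of $P$ as a concordance operator forces $\eta$ to act nontrivially at some metabelian (or higher) level of the derived tower of $\pi_1(M_R)$, where $M_R = S^3_0(R)$. The next step is to choose the seed knot $K$ so that its Levine--Tristram signature function $\sigma_K\colon S^1\to\Z$ has large nonzero integrals over all the arcs arising as supports of the characters detected by $\eta$; for concreteness, one can take $K$ to be a suitable connected sum of torus knots whose signature jumps are arranged in general position relative to those characters, following the companion constructions of \cite{HPC-infiniterank}.

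The computational heart is the infection formula for $\rho$-invariants from \cites{COT1,CT07,CHH}: for PTFA representations $\phi$ extending the canonical ones coming from the iterated Alexander module tower of $R$, one obtains
\[
\rho_\phi(M_{R_\eta(nK)}) = \rho_\phi(M_R) + \sum_j \int_{S^1} \sigma_{nK}(\omega)\, d\mu_j(\omega) = \rho_\phi(M_R) + n\cdot C_\phi(K),
\]
using additivity of Levine--Tristram signatures under connected sum. By varying $\phi$ through an infinite sequence of characters $\phi_1,\phi_2,\ldots$ coming from progressively deeper layers of the derived series of $\pi_1(M_R)$ (or from Vogel localisation of the Alexander module), the hope is to produce constants $C_{\phi_1}(K), C_{\phi_2}(K),\ldots$ that are $\Q$-linearly independent, so that every nontrivial finite $\Z$-combination $\sum a_i P(n_iK)$ is obstructed from being slice by at least one $\rho_{\phi_j}$.

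The main obstacle, and the reason the conjecture remains open, is controlling the extension of each representation $\phi_j$ across an \emph{unknown} $4$-manifold bounded by a hypothetical concordance between $\sum a_i P(n_iK)$ and the unknot: one must argue that no metaboliser for the Blanchfield form on the resulting $4$-manifold can simultaneously annihilate all of the $\rho_{\phi_j}$. This is precisely the role played by the robustness hypothesis in \cite{CHL-fractal} and by the technical conditions of \cite{HPC-infiniterank}, where it is handled only under restrictive assumptions on $P$. Removing the hypothesis on $P$ while keeping the seed $K$ \emph{fixed} as $n$ varies appears to require a new mechanism, for instance combining $\rho$-invariants with the Heegaard--Floer $d$-invariant techniques of \cite{cha-kim-bipolar}, or exploiting the noncommutative Blanchfield pairing of $R$ systematically enough to generate a representation-theoretic family that separates the multiples $nC_\phi(K)$ for all $n\in\Z$.
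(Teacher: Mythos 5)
This is not a theorem of the paper but an \emph{open} conjecture (Hedden--Pinz\'on-Caicedo, labelled ``Open'' in the text), and the paper supplies no proof; it only remarks that a partial confirmation appears in the work cited as \cite{rankexpanding}, using Heegaard--Floer techniques. You correctly recognise this, and indeed the most valuable part of your writeup is the final paragraph where you say plainly why the conjecture remains open.

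As a proof strategy sketch, your outline is the standard one and is sensible: realise $P$ as infection $R_\eta(\cdot)$ with $\lk(R,\eta)=0$ so $\eta\in\pi_1(S^3\setminus R)^{(1)}$; choose a seed $K$ (sums of torus knots) with favourable Levine--Tristram signature integrals; use the infection formula for von Neumann $\rho$-invariants $\rho_\phi(M_{R_\eta(J)}) = \rho_\phi(M_R) + \int_{S^1}\sigma_J\,d\mu_\phi$, together with additivity of $\sigma$ under connected sum to get linear growth in $n$. All of this is consistent with the blueprint of \cites{COT1,CHL-fractal,HPC-infiniterank}. But, as you correctly emphasise, the step that would turn this from a heuristic into a proof---controlling the extension of the representations $\phi_j$ over an arbitrary $4$-manifold bounding the zero surgery on a hypothetical slice of a finite combination, and ruling out that a metaboliser kills all the chosen $\rho_{\phi_j}$ simultaneously---is exactly what is not known to be achievable for an arbitrary nonconstant winding-number-zero $P$. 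The ``robustness'' hypotheses in \cite{CHL-fractal} and the technical conditions in \cite{HPC-infiniterank} are precisely designed to force this step, and removing them is the open problem. So your submission should be understood as a survey of the expected attack and its obstruction, not as a proof; it does not resolve the conjecture, and nothing in the paper does either.

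One small technical caution: your displayed formula asserts $\rho_\phi(M_{R_\eta(nK)}) = \rho_\phi(M_R) + n\,C_\phi(K)$ \emph{uniformly in the representation $\phi$}, but this only holds under the assumption that the restriction of $\phi$ to the infection torus is nontrivial and that $\phi$ restricts on the $nK$-piece to the abelianisation; for representations whose restriction to $\eta$ vanishes, the correction term is $0$, and for deeper non-abelian restrictions to the companion exterior one does not in general get a clean multiple of a Levine--Tristram integral. This is another place where the argument is not as automatic as the formula suggests.
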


The above was partially confirmed in~\cite{rankexpanding} using tools from Heegaard--Floer homology.

\section*{Epilogue}

There are many aspects of the field of slice knots and knot concordance that we were unable to cover in these few lectures. So we end these notes with a brief, but surely incomplete, overview of those topics. 

First, readers interested in more information on classical knot concordance should look at the excellent survey by Livingston~\cite{livingston-survey}. In particular, one will find there a much more detailed description of the algebraic concordance group and the Casson--Gordon invariants. Many recent developments in knot concordance, especially in the smooth setting, use Heegaard--Floer homology. Readers interested in those techniques should look at the excellent survey of Hom~\cite{hom-survey}. Obstructions to sliceness can also be obtained from Khovanov homology~\cite{rasmussen-slicegenus}.

Quite a bit of recent research has considered knots in 3-manifolds other than $S^3$ and/or sliceness in $4$-manifolds other than $B^4$. A notable highlight is the proposed strategy of Freedman--Gompf--Morrison--Walker~\cite{man-and-machine} to attack the 4-dimensional Poincar\'{e} conjecture: find a knot $K$ and a homotopy 4-ball $\mathcal{B}$ with $\partial \mathcal{B}=S^3$ such that $K$ is smoothly slice in $\mathcal{B}$ but not in $B^4$. This would imply that $\mathcal{B}$ is not diffeomorphic to $B^4$, disproving the smooth 4-dimensional Poincar\'e conjecture. It is worth noting that the following weaker question is also open: does there exist a knot $K\subseteq S^3$ which is slice in an integer homology 4-ball but not in $B^4$? However there do exist \emph{rationally slice} knots, such as the figure eight knot, i.e.~knots that are slice in a rational homology ball, but not necessarily slice in $B^4$. For more on this topic see e.g.~\cites{kawauchi-rational,cha-memoirs,HKPS-rational,HKP-rational}.

 More recent work of Manolescu and Piccirillo~\cite{manolescu-piccirillo-exotic} explains how one can use sliceness of knots and links to address questions about exotic smooth structures on closed 4-manifolds other than $S^4$. Other work concerning slicing knots in general 4-manifolds includes~\cites{norman-dehn,yasuhara-slice1,yasuhara-slice2,cochran-tweedy,conway-nagel,raoux-taurational,mmp-indefinite,pichelmeyer,klug-ruppik-deepshallow,aaas,mmrs,mmsw,hedden-raoux-adjunction}. When studying concordance in more general 3-manifolds, one could also quotient out by the action of connected sum by knots in $S^3$. This leads to the notion of \emph{almost concordance}, previously called \emph{piecewise linear $I$-equivalence}~\citelist{\cite{rolfsen-PLIequivalence}\cite{hillmanbook2012}*{{Section~1.5}}\cite{celoria-ac}\cite{fnop-ac}\cite{eylem}\cite{nopp}}.

In light of \cref{prop:0surgerychar} it is natural to ask to what extent the homeomorphism or homology cobordism class of the 0-surgery determines the concordance class of a knot. This is roughly the \emph{Akbulut--Kirby} conjecture, For more on this, see e.g.~\citelist{\cite{kirbylist}*{Problem~1.19}\cite{KL-mutation}\cite{CFHH}\cite{yasui-akbulut-kirby}\cite{miller-piccirillo}}. 

We introduced ribbon discs in \cref{sec:defns}. There is a relative version called \emph{ribbon concordance} in the smooth setting and \emph{homotopy ribbon concordance} in the topological setting (see also \orange{Exercise~$\square$}~\ref{ex:homotopyribbon}). These are not necessarily symmetric relations. A flurry of recent work in the smooth setting~\cites{zemke-ribbonconcordance,levine-zemke-ribbonconcordance,millerzemke-ribbonconcordance,DLVW-ribbonconcordance,kang-ribbonconcordance} culminated in Agol showing that ribbon concordance is a partial order on knots~\cite{agol-ribbonconc}, confirming a conjecture of Gordon~\cite{gordon-ribbonconcordance}. It is still open whether homotopy ribbon concordance is a partial order as well~\cites{friedl-powell-homotopyribbon,FKLNP-homotopyribbon}.

We are also interested in investigating sliceness and concordance within standard families of knots. For example, we saw in \cref{sec:defns} that the slice-ribbon conjecture holds for certain families. The families of algebraic knots~\cites{rudolph1976independent,litherland-cobordism,HKL12,ckp-algebraic}, 2-bridge knots~\cites{CG86,lisca-2bridge,miller-2bridge,feller-mccoy}, pretzel knots~\cites{greene-jabuka-pretzel,lecuona-pretzel,lecuona-montesinos,bryant-sliceribbon,long-sliceribbon,miller-pretzel,miller-pretzel-top,KST-alt-pretzel,KLS-pretzel}, and (strongly) quasipositive knots~\cites{rudolph:quasipositivity,hayden-cross,borodzik-feller} have received particular attention.

One may also study specific types of slice discs, e.g.~those that have certain symmetries. There has been a renewed interest recently in \emph{equivariant} sliceness and concordance~\cites{naik-equivariant,cha-ko-equivariant,davis-naik,boyle-issa,DMS-equivariant,boyle-chen,mallick-equivariant,miller-powell-equivariant,diprisa1,diprisa2,diprisa3}.

Finally so far we have only discussed the existence of slice discs. One could equally well study the uniqueness question. In other words, can one quantify the number of slice discs for a given slice knot? Recent work in this area includes~\cites{conway-powell-discs,sundberg-swann,juhasz-zemke-slice,miller-powell-discs,akbulut-ribbons,hayden-sundberg-surfaces,lipshitz-sarkar-slice}.


\clearpage
\section*{Exercises}
\subsubsection*{\green{Introductory problems}}

\begin{exercise-easy}\label{ex:knotexteriorhomologycircle}
    Let $K\subseteq S^3$ be a knot, with tubular neighbourhood $\nu K$.
    \begin{enumerate}
    \item Show that $S^3\sm \nu K$ is a homology circle, i.e.~$H_*(S^3\sm \nu K;\Z)\cong H_*(S^1;\Z)$.
    \item Show that $\pi_1(S^3\sm \nu K)$ is normally generated by an arbitrary meridian of $K$, i.e.~it is generated by the set of conjugates of the meridian.  
    \end{enumerate}
    Do the above properties generalise to higher-dimensional knots $S^n\hookrightarrow S^{n+2}$? How about knots with arbitrary codimension?
\end{exercise-easy}

\begin{exercise-easy}\label{ex:connsum-commutes}
Show that the connected sum operation is commutative and associative. In other words, given knots $J,K, L\subseteq S^3$, show that $J\#K$ is isotopic to $K\# J$ and $J \#(K\# L)$ is isotopic to $(J \#K)\# L$. 
\end{exercise-easy}

\begin{exercise-easy}\label{ex:reverse-mirror-slice}
Prove \cref{prop:reverse-mirror-slice}: Let $K\subseteq S^3$ be a knot. Show that $K$ is smoothly slice if and only if $rK$ is smoothly slice if and only if $\ol{K}$ is smoothly slice.

Note that the analogous statement also holds for topological sliceness.
\end{exercise-easy}

\begin{exercise-easy}\label{ex:conn-sum-slice}Prove~\cref{prop:conn-sum-slice}: 
If the knots $K,J\subseteq S^3$ are smoothly slice, then so is $K\# J$. 

As before, the analogous statement also holds for topological sliceness.
\end{exercise-easy}

\begin{figure}[htb]
	\centering
\begin{tikzpicture}
        \node[anchor=south west,inner sep=0] at (0,0){	\includegraphics[width=12cm]{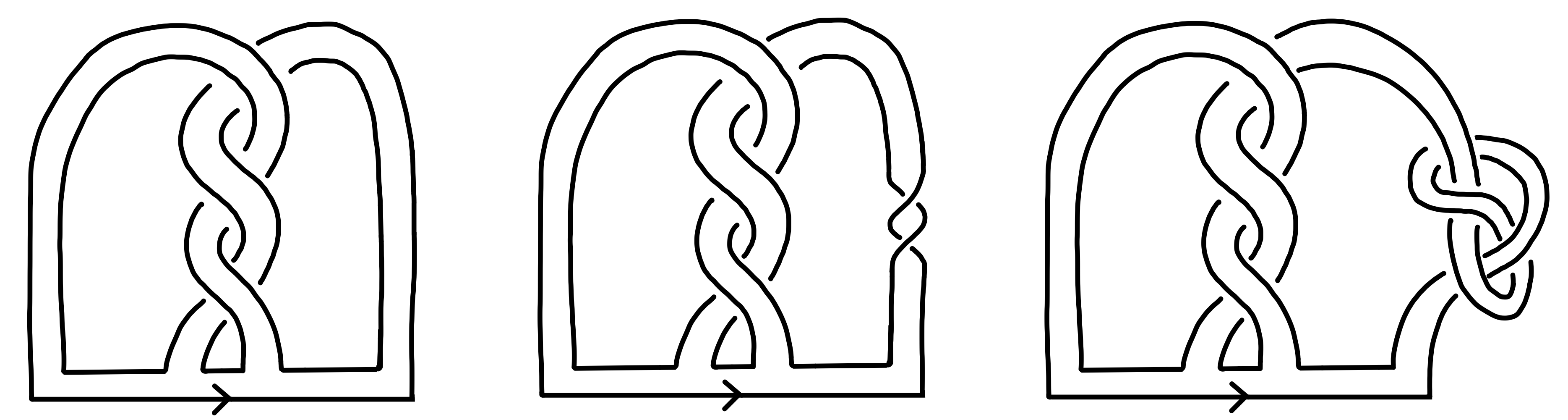}};
		\node at (1.65,-0.2) {(a)};
		\node at (5.55,-0.2) {(b)};
		\node at (9.5,-0.2) {(c)};
	\end{tikzpicture}
\caption{Some ribbon knots}\label{fig:ribbons}
\end{figure} 

\begin{exercise-easy}\label{ex:ribbon}
Show that the knots in~\cref{fig:ribbons} are ribbon. 
\end{exercise-easy}

\begin{exercise-easy}\label{ex:easy-ribbon}
Prove~\cref{prop:easy-ribbon}: For any knot $K\subseteq S^3$, the knot $K\# r\ol{K}$ is ribbon.
\end{exercise-easy}

\begin{exercise-easy}\label{ex:cone}
Prove \cref{prop:cone}: Let $K\subseteq S^3$ be a knot. The coned disc $\cone(K)\subseteq \cone(S^3)=B^4$ is locally flat if and only if $K$ is the trivial knot.

\textit{Hint:} Use the fact (without proof) from classical knot theory that $\pi_1(S^3\sm K)\cong \Z$ if and only if $K$ is the trivial knot.
\end{exercise-easy}

\begin{exercise-easy}\label{ex:wh-slice}
Let $K\subseteq S^3$ be a knot. Prove that if $K$ is smoothly or topologically slice, then the same holds for the untwisted Whitehead doubles $\Wh^\pm(K)$. 
\end{exercise-easy}

\begin{exercise-easy}\label{ex:slice-conc-unknot}
Prove~\cref{prop:slice-conc-unknot}: A knot $K\subseteq S^3$ is smoothly (resp.\ topologically) slice if and only if it is smoothly (resp.\ topologically) concordant to the unknot.
\end{exercise-easy}

\begin{exercise-easy}\label{ex:conc-equiv-rel}
Prove~\cref{prop:conc-equiv-rel}: Smooth (resp.\ topological) concordance is an equivalence relation on knots. 
\end{exercise-easy}

\begin{exercise-easy}\label{ex:easy-conc-facts}
Prove~\cref{prop:easy-conc-facts}: In either $\C^\diff$ or $\C^\top$, the inverse of $[K]$ is the class of $-K:=r\ol{K}$. The identity element in $\C^\diff$ (resp.\ $\C^\top$) is the class of the unknot, equivalently the class of any smoothly (resp.\ topologically) slice knot. 
\end{exercise-easy}

\begin{exercise-easy}\label{ex:neg-amph} 
Prove \cref{prop:neg-amph}: 
A knot $K\subseteq S^3$ is said to be \emph{negative amphichiral} if it is isotopic to $r\ol{K}$. Every negative amphichiral knot is order two in $\C^\diff$ and $\C^\top$. As a result, the figure eight knot is order two in $\C^\diff$ and $\C^\top$. 
\end{exercise-easy}

\begin{exercise-easy}\label{ex:solvable-subgroup}\label{ex:bipolar-subgroup}\leavevmode
\begin{enumerate}
\item For each $n\in\tfrac{1}{2}\NN$, show that the set $\F_n$ is a subgroup of $\C^\diff$. 
\item For each $n\in\NN$, show that the sets $\P_n$ and $\N_n$ are submonoids of $\C^\diff$, and $\B_n$ is a subgroup of $\C^\diff$. 
\end{enumerate}
\end{exercise-easy}

\begin{exercise-easy}
Let $K\subseteq S^3$ be a smoothly slice knot. Show that $K$ lies in $\T_n$ for all $n$. 
\end{exercise-easy}

\begin{exercise-easy}\label{ex:satellite-concordance}
Fix a pattern $P\subseteq S^1\times D^2$. Show that the satellite operation is well defined on concordance on either category, i.e.~there are well defined functions $P\colon \C^\diff\to \C^\diff$ and $P\colon \C^\top\to\C^\top$. 
\end{exercise-easy}

\begin{exercise-easy}\label{ex:homomorphism-slice}
Let $P\subseteq S^1\times D^2$ be a pattern so that the induced satellite operator $P\colon \C^\diff\to \C^\diff$ (resp.~$\C^\top\to \C^\top$) is a homomorphism. Show that $P(U)$ is smoothly (resp.~topologically) slice, where $U$ denotes the unknot. 
\end{exercise-easy}

\subsubsection*{\orange{Moderate problems}}
\begin{exercise-medium}\label{ex:Zknot}
Prove the hint from~\green{Exercise~$\triangle$}~\ref{ex:cone}, that $\pi_1(S^3\sm K)\cong \Z$ if and only if $K$ is the trivial knot. 

\textit{Hint:} Use Dehn's lemma.
\end{exercise-medium}

\begin{exercise-medium}
Learn enough classical knot theory to show that the right-handed trefoil, left-handed trefoil, and the figure eight from \cref{fig:exknots} are nontrivial and distinct knots. 

\textit{Hint:} This will likely involve learning about some classical knot invariants, such as $3$-colourability, the knot group, the Seifert genus, the signature, the Alexander polynomial, \dots. 
\end{exercise-medium}

\begin{exercise-medium}\label{ex:something}
Fix $n\geq 4$. Prove that every smooth $S^1\hookrightarrow S^n$ bounds a smoothly embedded disc in $S^n$. 

\textit{Hint:} The case $n=4$ is the most challenging. Think about the types of singularities that arise in the generic case and try to get rid of them.
\end{exercise-medium}

\begin{exercise-medium}\label{ex:ribbon-sing}
Prove \cref{prop:ribbon-sing}: A knot $K\subseteq S^3$ is ribbon if and only if it bounds a disc in $S^3$ with only ribbon singularities, i.e.\ singularities of the form shown in \cref{fig:ribbon-sing}.
\end{exercise-medium}

\begin{exercise-medium}\label{ex:homotopyribbon}
    A knot $K \subseteq S^3$ is \textit{homotopy ribbon} if there exists a topologically locally flat disc $D \subseteq B^4$ bounded by $K$ such that the inclusion induced map $\pi_1(S^3\setminus \nu K) \to \pi_1(B^4 \setminus \nu D)$ is surjective. Prove that ribbon implies homotopy ribbon. 
    
    \textit{Hint:} Consider handle decompositions for ribbon disc complements.
\end{exercise-medium}

\begin{exercise-medium}\label{ex:bad-disc}
Prove \cref{prop:bad-disc}: There exist smooth slice discs that are not ambiently isotopic (relative to the boundary) to any ribbon disc. 

\textit{Hint:} Begin with the standard smooth slice disc for the unknot. Use the fact that there exist $2$-knots in $S^4$ with nonabelian fundamental group of the complement (the intrepid reader could try to prove the latter claim). Use \orange{Exercise~$\square$}~\ref{ex:homotopyribbon}. For a further challenge, construct examples of such discs bounded by nontrivial knots.
\end{exercise-medium}

\begin{exercise-medium}\label{ex:trace-embedding-lemma}
Prove the trace embedding lemma (\cref{lem:trace-embedding-lemma}): Let $K\subseteq S^3$ be a knot. The $0$-trace $X_0(K)$ admits a smooth (resp. locally collared) embedding into $\R^4$ if and only if $K$ is smoothly (resp. topologically) slice. 

Wonder whether a similar argument would apply to the $n$-traces of the knots, denoted by $X_n(K)$, obtained by attaching an $n$-framed $2$-handle to $B^3$ along a knot $K\subseteq S^3=\partial B^4$, for an arbitrary $n\in \Z$.

\textit{Hint:} It is easier to prove the version of the trace embedding lemma in $S^4$ rather than $\R^4$. Avoid using the smooth $4$-dimensional Schoenflies conjecture (which is still open!) in the $S^4$ version by noting that the closure of the complement of a smoothly embedded $4$-ball in $S^4$ is itself diffeomorphic to a $4$-ball, by Palais's disc theorem.  
\end{exercise-medium} 

\begin{exercise-medium}\label{ex:conc-to-slice}
Prove~\cref{prop:conc-to-slice}: Two knots $K,J\subseteq S^3$ are smoothly (resp.\ topologically) concordant if and only if $K\# r\ol{J}$ is smoothly (resp.\ topologically) slice.

\textit{Hint:} It is possible to isotope a concordance so that it contains a straight arc, i.e.\ one of the form $\mathrm{pt} \times [0,1]\subseteq S^3\times [0,1]$ (why?). Remove an open tubular neighbourhood of this arc. For the other direction, build a concordance between $K\# r\ol{J}\# J$ and $J$, and use \green{Exercises}~\ref{ex:easy-ribbon} and~\ref{ex:slice-conc-unknot}.
\end{exercise-medium}

\begin{exercise-medium}\label{ex:iso-not-group}
Show that isotopy classes of knots do \textbf{not} form a group under connected sum. They do however form a monoid.
\end{exercise-medium}

\begin{exercise-medium}\label{ex:0surgerychar-traceembedding}
Reprove \cref{prop:0surgerychar} using the trace embedding lemma (\cref{lem:trace-embedding-lemma}).
\end{exercise-medium}

\begin{exercise-medium}\label{ex:solvable-infection}
Prove \cref{prop:solvable-infection}: Let $R\subseteq S^3$ be a knot and $\eta\subseteq S^3\sm R$ be a curve which is unknotted when considered in $S^3\supseteq S^3\sm R$. Let $K\subseteq S^3$ be an arbitrary knot, and let $n,i\geq 0$ be arbitrary 
\begin{enumerate}
\item if $R,K\in \F_n$ and $\eta\in \pi_1(S^3\sm R)^{(i)}$, then $R_\eta(K)\in \F_{n+i}$.
\item if $R,K\in \P_n$ and $\eta\in \pi_1(S^3\sm R)^{(i)}$, then $R_\eta(K)\in \P_{n+i}$.
\item if $R,K\in \N_n$ and $\eta\in \pi_1(S^3\sm R)^{(i)}$, then $R_\eta(K)\in \N_{n+i}$.
\item if $R,K\in \B_n$ and $\eta\in \pi_1(S^3\sm R)^{(i)}$, then $R_\eta(K)\in \B_{n+i}$.
\item if $R,K\in \T_n$ and $\eta\in \pi_1(S^3\sm R)^{(i)}$, then $R_\eta(K)\in \T_{n+i}$.
\end{enumerate}
\end{exercise-medium}

\begin{exercise-medium}\label{ex:satellite-genus}
Let $R\subseteq S^3$ be an arbitrary knot and $\eta\subseteq S^3\sm R$ be a curve which is unknotted when considered in $S^3\supseteq S^3\sm R$. Assume that $\lk(R,\eta)=0$. Show that the Seifert genus of the knots produced by infection on $R$ along $\eta$ is bounded. In other words, there exists $g$, such that for any knot $K$, the Seifert genus of $R_\eta(K)$ is at most $g$. Come up with a candidate $g$. 
\end{exercise-medium}
\subsubsection*{\red{Challenge problems}}

\begin{exercise-hard}
See~\orange{Exercise~$\square$}~\ref{ex:Zknot}. Are there any restrictions on the fundamental group of a slice/ribbon knot? In other words, is every knot group the knot group of a slice/ribbon knot? 

\textit{Hint:} Consider the Alexander module.
\end{exercise-hard}

\begin{exercise-hard}\label{ex:somethingelse}
Prove that every smooth $2$-knot is smoothly slice, i.e.\ every smooth $S^2\hookrightarrow S^4$ bounds a smoothly embedded $B^3$ in $B^5$. Prove that every topological $2$-knot is topologically slice. This was originally proven by Kervaire~\citelist{\cite{kervaire-2knots}*{Th\'{e}or\`{e}me~III.6}\cite{kervaire-odd}*{Theorem~1}}. 

\textit{Hint:} Use the fact that every $3$-manifold is obtained by even-framed surgery on some link in $S^3$. 
\end{exercise-hard}

\begin{exercise-hard}\label{ex:arf-0solvable}\label{ex:algslice-solvable}
Let $K\subseteq S^3$ be a knot. 
\begin{enumerate}
\item Show that $K\in \F_0$ if and only if $\Arf(K)=0$.
\item Show that $K\in \F_{0.5}$ if and only if $K$ is algebraically slice.
\item Show that if $K\in \F_{1.5}$ then all its Casson--Gordon sliceness obstructions vanish.
\end{enumerate}
\end{exercise-hard}

\clearpage

\def\MR#1{}
\bibliography{bib}
\end{document}